\documentclass[12pt,reqno]{amsart}
 \usepackage{graphicx}
 \usepackage[utf8]{inputenc}
 \usepackage[T1]{fontenc}
 \usepackage{lmodern}
 \usepackage[normalem]{ulem}
 \usepackage{verbatim}
 \usepackage{bbm}
 \usepackage{stmaryrd}
 \usepackage{amsmath}
 \usepackage{amssymb}
 \usepackage{dsfont}
\usepackage{amsthm}

\usepackage{xcolor}

\setlength{\textheight}{22cm}
\setlength{\topmargin}{-.5cm}
\setlength{\evensidemargin}{0cm}
\setlength{\oddsidemargin}{0cm}
\setlength{\textwidth}{16cm}
\usepackage{amsmath}%
\usepackage{amsfonts}%
\usepackage{amssymb}%
\usepackage{graphicx}
\usepackage{dsfont}

\theoremstyle{plain} 
\newtheorem{theorem}{Theorem}[section]
\newtheorem*{theorem*}{Theorem}
\newtheorem{lemma}[theorem]{Lemma}
\newtheorem*{lemma*}{Lemma}
\newtheorem{corollary}[theorem]{Corollary}
\newtheorem*{corollary*}{Corollary}

\newtheorem*{proposition*}{Proposition}
\newtheorem{assumption}[theorem]{Assumption}
\newtheorem*{assumption*}{Assumption}

\newtheorem*{definition*}{Definition}

\newtheorem*{example*}{Example}
\newtheorem{remark}[theorem]{Remark}

\newtheorem*{remark*}{Remark}
\newtheorem*{remarks*}{Remarks}

\newcommand{\E}{\mathbb{E}}
\newcommand{\N}{\mathbb{N}}

\newcommand{\R}{\mathbb{R}}

\newcommand{\T}{\mathbb{T}}

\renewcommand{\P}{\mathbb{P}}

\DeclareMathOperator{\tr}{Tr}

\renewcommand{\leq}{\leqslant}
\renewcommand{\geq}{\geqslant}
\renewcommand{\epsilon}{\varepsilon}

\def\tr{{\rm Tr}}
\def\R{{\mathbb R}}
\def\rank{{\rm rank}}
\def\T{{\mathbb T}}
\title{Large deviations for Gibbs ensembles of the classical Toda chain}

\author{Alice Guionnet}
         \address{Universit\'e de Lyon, ENSL, CNRS,  France}
\email{Alice.Guionnet@ens-lyon.fr}

		\author{ Ronan ~Memin}
           \address[Ronan Memin]{Universit\'e de Lyon, ENSL, CNRS,  France}
      \email{Ronan.Memin@ens.fr}
\thanks{This project   has received funding from the European Research Council (ERC) under the European Union
Horizon 2020 research and innovation program (grant agreement No. 884584).}

\begin{document}
\maketitle

{\bf Abstract} We prove large deviation principles for the distribution of the empirical measure of the eigenvalues of Lax matrices  following  the Generalized Gibbs ensembles of the classical Toda chain introduced in \cite{Spohn1}. We deduce the almost sure convergence  of this empirical measure towards a  limit which we describe  in terms of the limiting  empirical measure of Beta-ensembles. Our results apply to general smooth potentials. 

\tableofcontents
\section{Introduction}
 In a breakthrough paper \cite{Spohn1}, Herbert Spohn introduced the generalized Gibbs  ensembles of the classical Toda chain as invariant measures of the dynamics of the classical Toda lattice. He  analyzes them by comparing the Toda Lax matrices for  these Generalized Gibbs ensembles with  Dumitriu-Edelman tri-diagonal representations of $\beta$-ensembles. Thanks to this beautiful comparison,  \cite{Spohn1}  showed that
the empirical measure of the eigenvalues of   Toda Lax matrices converges towards a probability measure related with the equilibrium measure for $\beta$-ensembles.
   One of the key tools of Herbert Spohn analysis is the use of transfer matrices, which  are restricted to polynomial potentials. We refer the interested reader to subsequent developments in \cite{Spohn2,Spohn3,Spohn4} and \cite{GravaMazzuca} where  the transfer matrix approach is used in the similar context of the so-called Ablowitz-Laddik lattice.

The  main goal of this article is  to generalize some  of  the results of \cite{Spohn1} by using large deviations theory, which allows  to consider more general potentials. More precisely, we will show the convergence of the free energy and of the empirical measure of the eigenvalues of Toda Lax matrices following these Generalized Gibbs ensembles. Moreover, we will express the limits in terms of the well known $\beta$-ensembles. 
Indeed, a  key tool is again to compare the Toda Lax matrices  with  Dumitriu-Edelman tri-diagonal representations of $\beta$-ensembles. 
 Moreover, we will derive large deviation principles  for the empirical measure of the eigenvalues of  tri-diagonal matrices with more general coefficients. However, in this generality, the rate functions and the limits will not be explicit as the comparison with $\beta$-ensembles is not possible.

More precisely, the Hamiltonian of the Toda chain on sites $j=1,\ldots, N$ is given by
$$H=\sum_{j=1}^N (\frac{1}{2}p_j^2 +e^{-r_j}),\quad r_j=q_{j+1}-q_j$$
with the periodic conditions $q_{N+j}=q_j+cN$ for some real constant $c$. The equations of motion are then given by
\begin{equation}\label{dyn}\frac{d}{dt} q_j=p_j,\qquad \frac{d}{dt} p_j=e^{-r_{j-1}}-e^{-r_j}\,.\end{equation}
Let $L_N$ be the Lax matrix given by the $N\times N$  tri-diagonal matrix with entries 
\begin{equation}
\label{def matrice L}
(L_N)_{j,j} =p_j\text{ and  }(L_N)_{j,j+1}= (L_N)_{j+1,j} = e^{-r_j/2}
\end{equation}
with periodic boundary conditions $ (L_N)_{1,N} = (L_N)_{N+1,N} $ and  $(L_N)_{N,1}=(L_N)_{N,N+1}$, then for all integer number $p$,
$$Q^{p}_N=\tr (L_N^p)$$
is conserved by the dynamics \eqref{dyn} as well as $\sum_{i=1}^Nr_i$. It is therefore natural to consider that the finite $N$ Toda chain is distributed according to the  Gibbs measure with density $e^{-\tr(W(L_N)) -P\sum r_{i} }$ with respect to
Lebesgue measure. Here, 
$P>0$ controls the pressure of the chain and $W$ is a potential to be chosen later, which can be a polynomial or a general measurable function from $\R$ into $\R$. We will assume it goes to infinity faster than $x^2$: namely there exists $c>0$ and a finite constant $C$ such that for {all $x\in \R$}
\begin{equation}\label{ass1}
W(x)\ge cx^2+C\,.
\end{equation}
This assumption is used to compare our distribution to the case where $W(x)=c x^2$ in which case the entries of the Lax matrix $L_N$ are independent. 
We can without loss of generality assume $c=\frac{1}{2}$ up to rescaling and therefore put 
\begin{equation}\label{defW}W(x)=\frac{1}{2} x^2 +V(x).\end{equation}
In the following we will denote 
\begin{equation}\label{Gibbs}
d\T_N^{V,P}(p,r)=\frac{1}{\mathbb Z_{N,\T}^{V,P}} \exp\{-\tr(V(L_N))-\frac{1}{2}\tr(L_{N}^{2})\} \prod _{i=1}^N e^{-Pr_i}dr_i dp_i
\end{equation}
where $\mathbb Z_{N,\T}^{V,P}$ is the partition function of the Toda Gibbs measure :
\begin{equation}\label{pp}\mathbb Z_{N,\T}^{V,P}=\int \exp\{-\tr(V(L_N))-\frac{1}{2}\tr(L_{N}^{2})\} \prod _{i=1}^N e^{-Pr_i}dr_i dp_i\,.\end{equation}
We denote
in short  $\T_N^{P}$ for $\T_N^{0 ,P}$.
Our goal in this article is to study the empirical measure of the eigenvalues $\lambda_N\le\cdots\le \lambda_1$ of $L_N$
 denoted by 
$$\hat \mu_{L_N}=\frac{1}{N}\sum_{i=1}^N \delta_{\lambda_i}\,.$$
We shall call $\hat\mu_{L_N}$ the {empirical} measure of $L_N$, or the empirical density of states of the Lax matrix following \cite{Spohn1}.
Our main result is  a large deviations principle for the distribution of $\hat\mu_{L_N}$ under $d\T_N^{V,P}$, from which we deduce the almost  sure convergence of $\hat\mu_{L_N}$ under $d\T_N^{V,P}$.
\begin{theorem}\label{theoldp}
Let $P>0$ and assume that $V$ is continuous. Assume that either $V$ is uniformly bounded or there exists $k\in\N^{*}$ such that
\begin{equation}\label{condi}\lim_{|x|\rightarrow \infty} \frac{V(x)}{x^{2k}}=a\,,\end{equation}
with $a>0$ if $k>1$ and $a>-1/2$ if $k=1$. 
Then:
\begin{enumerate}
\item The law of $\hat\mu_{L_N}$ under $\T_N^{V,P}$ satisfies a large deviation principle in the scale $N$ with good rate function $T_P^V$.
\item  $T_P^V$  achieves its minimal value at a unique probability measure $\nu_P^V$.
\item As a consequence $\hat \mu_{L_N}$ converges almost surely and in $L^1$ towards $\nu_P^V$.
\end{enumerate}
\end{theorem}
$\nu^{V}_{P}$ corresponds to the density of states of the Lax matrix in  \cite{Spohn1}. 
Moreover, following 
 \cite{Spohn1}, we can identify the equilibrium measure 
 $\nu_P^V$ using the equilibrium measure for Coulomb gases in dimension one 
 at temperature of order of the {number of particles}. More precisely,
 for a probability measure $\mu$ on the real line, we define the function $f_{P}^{V}$ by 
 $$f_P^V(\mu)=\frac{1}{2} \int\left(\frac{1}{2}(x^{2}+y^{2})+V(x)+V(y)-2P\ln |x-y|\right)d\mu(x)d\mu(y) +\int \ln\frac{d\mu}{dx} d\mu(x)  $$
if $\mu\ll dx$, whereas $f_P^V$ is infinite otherwise.  $f_P^V$ achieves its minimal value at a unique probability measure $\mu_P^V\ll dx$ which satisfies the non-linear equation
\begin{equation}\label{carmu} \frac{1}{2} x^{2}+V(x)- 2P\int\ln |x-y|d\mu_P^V(y)+\ln\frac{d\mu_P^V}{dx} =\lambda_P^V\qquad   a.s \end{equation}
where $\lambda_P^V$ is a finite constant. We show  in section \ref{Coulomb} that $\mu_P^V$ is absolutely continuous with respect to Lebesgue measure and that its depends smoothly on the parameter $P$.  In Lemma \ref{convex}, we show it is in fact differentiable in $P$.
We then show that 
\begin{theorem}Let $P$ be a positive real number. Then,
for any bounded continuous function $f$ on the real line,
$$\int f(x) d\nu_P^V(x)=\partial_P (P \int f(x)d\mu_P^V(x))$$
\end{theorem}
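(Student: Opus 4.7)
The plan is to recognize both sides of the identity as derivatives of free energies, and to exploit a key relation between the Toda free energy and the free energy of the Coulomb gas in the high-temperature regime $\beta=2P/N$. Denote $F_{\T}(V,P):=\lim_{N} N^{-1}\log Z_{N,\T}^{V,P}$ and $F_{\beta}(V,P):=\lim_{N} N^{-1}\log Z_{N,2P/N}^{V}=-f_{P}^{V}(\mu_{P}^{V})$; the existence and form of these limits follow respectively from the LDP of Theorem~\ref{theoldp} and from the classical high-temperature Coulomb-gas LDP (valid in the scaling $\beta N=2P$).

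For a bounded continuous test function $f$, the integrals $\int f\,d\nu_{P}^{V}$ and $\int f\,d\mu_{P}^{V}$ can both be obtained as $t$-derivatives at $t=0$ of these free energies under the tilt $V\to V+tf$. On the Toda side, the tilt multiplies the Gibbs weight by $\exp(-tN\int f\,d\hat\mu_{L_{N}})$; Varadhan's lemma combined with the uniqueness of the minimizer $\nu_{P}^{V}$ of the rate function $T_{P}^{V}$ gives
\[
\partial_{t} F_{\T}(V+tf,P)|_{t=0}=-\int f\,d\nu_{P}^{V}.
\]
On the Coulomb side, the relation $f_{P}^{V+tf}(\mu)=f_{P}^{V}(\mu)+t\int f\,d\mu$ combined with the envelope theorem (valid because $\mu_{P}^{V}$ is the unique minimizer of $f_{P}^{V}$) gives
\[
\partial_{t} F_{\beta}(V+tf,P)|_{t=0}=-\int f\,d\mu_{P}^{V}.
\]

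The crux of the proof is then the free-energy identity
\[
F_{\T}(V,P)=\partial_{P}\bigl(P\,F_{\beta}(V,P)\bigr)+g(P),
\]
for some function $g$ depending only on $P$. Granting this identity together with the differentiability of $P\mapsto\int f\,d\mu_{P}^{V}$ established in Section~\ref{Coulomb}, interchanging $\partial_{t}$ and $\partial_{P}$ and combining the three displays above gives
\[
\int f\,d\nu_{P}^{V}=-\partial_{t}\partial_{P}\bigl(P\,F_{\beta}(V+tf,P)\bigr)\big|_{t=0}=\partial_{P}\Bigl(P\int f\,d\mu_{P}^{V}\Bigr),
\]
which is the claim. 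The main obstacle is thus the free-energy identity above. I expect it to follow from the analysis of the partition function carried out in the earlier sections of the paper: after the substitution $a_{j}=e^{-r_{j}/2}$ the Toda partition function becomes a $2N$-dimensional integral with weight $\prod a_{j}^{2P-1}$, and its comparison with the $N$-dimensional $\beta=2P/N$ Coulomb-gas partition function via the Dumitriu--Edelman spectral change of variables, carefully adapted to the periodic boundary conditions of $L_{N}$, should produce, after integrating out the extra $N$ degrees of freedom, precisely the $\partial_{P}(P\,\cdot\,)$ structure.
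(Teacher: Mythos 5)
Your reduction of the statement to a free-energy identity is in the right spirit, but the proposal leaves precisely the heart of the matter unproven. The identity $F_{\T}^{V,P}=\partial_P\bigl(PF_{C}^{V,P}\bigr)$ (your ``crux'') is essentially equivalent to the theorem, and the route you sketch for it --- an exact Dumitriu--Edelman change of variables ``adapted to the periodic boundary conditions'' which, after integrating out the extra degrees of freedom, ``should produce the $\partial_P(P\,\cdot)$ structure'' --- does not work as stated. Dumitriu--Edelman applies to a non-periodic tridiagonal matrix whose off-diagonal entries are $\chi_{(N-j)\beta}$ with \emph{linearly varying} parameters, whereas the Toda Lax matrix has constant-parameter $\chi_{2P}$ entries and periodic corner entries; there is no finite-$N$ algebraic identity between the two partition functions exhibiting a $\partial_P(P\,\cdot)$ structure. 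What the paper actually does is asymptotic: it couples the Dumitriu--Edelman matrix (parameters $\approx 2P(1-j/N)$, slowly varying) with a block-diagonal matrix whose $M$ blocks are Toda-type matrices with constant parameters $iP/M$, proves this is an exponentially good approximation (Lemma \ref{approx exp beta ens}), deduces the representation of $f_P^V$ as a limiting infimum of averages $\frac1M\sum_i\bigl(T_{iP/M}(\nu_{iP/M})+\int V\,d\nu_{iP/M}\bigr)$ over families averaging to $\mu$, and only then obtains the \emph{integrated} relation $PF_C^{V,P}=\int_0^P F_\T^{V,s}\,ds$, from which $F_\T^{V,P}=\partial_P(PF_C^{V,P})$ follows a.e.\ and then everywhere by convexity in $P$. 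Without an argument of this type (or some substitute), your key identity is a conjecture, not a step.

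A second gap: even granting the free-energy identity, you must interchange $\partial_t$ and $\partial_P$ on the two-parameter limit $PF_\beta(V+tf,P)$, and you must know that $P\mapsto\int f\,d\mu_P^V$ is differentiable for a merely bounded continuous $f$. Neither is justified: Lemma \ref{lemreg} gives Lipschitz control of $P\mapsto\mu_P^V$ only in the logarithmic-energy metric $D$, i.e.\ for test functions with finite $\|\cdot\|_{1/2}$-norm, and the existence of $\partial_P\bigl(P\int f\,d\mu_P^V\bigr)$ for general bounded continuous $f$ is part of what the theorem asserts. The paper avoids both issues by identifying the minimizer directly: it chooses a continuous family $s\mapsto\nu_{sP}^*$ of minimizers of $\mu\mapsto T_{sP}(\mu)+\int V\,d\mu$ (using the continuity of the minimizing sets, Corollary \ref{continuite minimiseur}), shows via Lemma \ref{approx exp beta ens} that $\int_0^1\nu_{sP}^*\,ds$ minimizes $f_P^V$, hence equals $\mu_P^V$ by strict convexity, and reads off $P\mu_P^V=\int_0^P\nu_s^V\,ds$, i.e.\ $\nu_P^V=\partial_P(P\mu_P^V)$, with no interchange of derivatives. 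Note also that your Toda-side envelope computation presupposes uniqueness of the minimizer $\nu_P^V$, which in the paper is itself a by-product of this decomposition rather than an a priori input.
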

This result was sketched in \cite{Spohn1} when $V$ is a polynomial, through the transfer operator technique. In \cite{mazzuca2022mean}, the author establishes it for a quadratic potential, through the analysis of the moments of the empirical measure of the Lax matrix.

Our strategy is to prove first a large deviation principle in the case when $V$ vanishes: then, $L_N$ has  independent entries (modulo the symmetry constraint) under $\T^{P}_N$. We then derive  large deviation principles for more general bounded continuous potentials by using Varadhan's Lemma, see section \ref{quad}. \\ Indeed, in the case where $V$ vanishes, 
the random variables  $(p_j,r_j)_{1\leq j \leq N}$ are independent, $(L_N)_{j,j}$ are standard Gaussian  $N(0,1)$ variables and  $\sqrt{2}(L_N)_{j,j+1}$ follows a  $\chi_{2P}$ distribution with density with respect to Lebesgue measure given by 
\begin{equation}
\label{loi du chi}
\chi_{2P}(x) = \frac{2^{1-P}x^{2P-1}e^{-x^2/2}}{\Gamma(P)}\mathbf{1}_{x>0}.
\end{equation}  
	
	The central observation is that we can compare this matrix to the tri-diagonal matrix $C^{\beta}_{N}$ introduced by 
Dumitriu and Edelman  \cite{dued}. This is the symmetric matrix with independent (up to symmetry) entries whose diagonal elements are independent  standard Gaussians variables, and off diagonal elements so that $\sqrt{2} C_N^{\beta}({j,j+1})$ follow a $\chi$ distribution with parameter $\beta(N-j)$. When $\beta=2P/N$, the matrix is therefore similar to $L_{N}$ except that the parameters of the off-diagonal entries vary linearly. The key point is that the law of the eigenvalues of $C^{\beta}_{N}$ is explicit and given by the 
 $\beta$-ensemble, see  Section \ref{Coulomb}.
This comparison allows to compare  the free energy, the rate function {and} the equilibrium measure of the Toda chain with those of Coulomb gases in section \ref{Coulomb}. In section \ref{general}, we study the case of general potentials. The proof is nearly independent from the quadratic case, but requires additional arguments in particular because the eigenvalues of the Toda matrix are not simple functions of the empirical measure of the entries. {Note that the proof given in section \ref{general} also applies to the case where $V$ is bounded. We nevertheless choose to give a separate proof, dedicated to this case: the computations being simpler, the core of the proof seems more accessible and introduces ideas we re-use in the case where $V$ is unbounded.}

Moreover, our result allows to derive large deviation principles for the empirical measure of the tri-diagonal matrices with independent standard Gaussian entries on the diagonal and independent chi distributed variables 
with  general parameters  profile on the off-diagonal. Namely let $L_N^\sigma$ be a tri-diagonal symmetric matrix with independent Gaussian variables on the diagonal and independent variables $\sqrt{2} L_{N}^{\sigma}(j,j+1)$ chi distributed with parameter $\sigma(\frac{i}{N}), 1\le i\le N$. Let  $\T_{N}^{V,\sigma}$ be the distribution with density $e^{-\tr(V(L_{N}^{\sigma}))}/Z$ with respect to the distribution of $L_N^\sigma$.
\begin{theorem}\label{generalization}
Assume that $V$ is continuous and satisfies \eqref{condi}. Then, if $\sigma$ is bounded continuous,  
\begin{enumerate}
\item the law of $\hat\mu_{L_N^\sigma}$ under $\T_{N}^{V,\sigma}$ satisfies a large deviation principle in the scale $N$ with good rate function $T^{V}_{\sigma}$,
\item  $T^{V}_{ \sigma}$  achieves its minimal value at a unique probability measure $\nu_\sigma^{V}=\int_0^1 \nu_{\sigma(P)}^V dP$,
\item As a consequence, $\hat \mu_{L_N^\sigma}$ converges almost surely and in $L^1$ towards $\nu_\sigma^{V}$.
\end{enumerate}
 \end{theorem}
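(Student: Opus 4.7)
The plan is to reduce Theorem \ref{generalization} to Theorem \ref{theoldp} by a two-scale discretization of the variance profile $\sigma$ together with a block decomposition of $L_N^\sigma$. First I would approximate $\sigma$ uniformly by step functions $\sigma_K(x) = \sigma(k/K)$ on $((k-1)/K, k/K]$. Because the off-diagonal entries of $L_N^\sigma$ appear through explicit chi densities, the Radon-Nikodym derivative between the laws of $L_N^\sigma$ and $L_N^{\sigma_K}$ can be estimated pointwise, yielding exponential negligibility at scale $N$ as $K \to \infty$ by uniform continuity of $\sigma$. It therefore suffices to establish the LDP for $\hat\mu_{L_N^{\sigma_K}}$ and then pass to the limit in $K$.

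For $\sigma_K$, I would set the $K-1$ off-diagonal entries of $L_N^{\sigma_K}$ at positions $\lfloor kN/K\rfloor$ to zero, producing a block-diagonal matrix $\bigoplus_{k=1}^K L^{(k)}$ whose $k$-th block is tridiagonal of size $\approx N/K$ with Gaussian diagonal and chi off-diagonal of parameter $\sigma(k/K)$. This is a rank-$O(K)$ perturbation, so by Hoffman--Wielandt / Weyl interlacing the empirical spectral measures of $L_N^{\sigma_K}$ and $\bigoplus_k L^{(k)}$ differ by $O(K/N)$ in the Wasserstein topology; analogously $\tr V(L_N^{\sigma_K}) = \sum_k \tr V(L^{(k)}) + O(K)$ on an exponentially tight event confining the spectrum to a compact set (tightness coming from the Gaussian/chi tails together with $V(x)/x^{2k}\to a$). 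The tilted law of $L_N^{\sigma_K}$ thus factorizes, up to $e^{o(N)}$ corrections, as a product of $K$ independent tilted laws, each covered by Theorem \ref{theoldp} with $\sigma(k/K)$ in the role of the pressure parameter (via the Dumitriu--Edelman / Spohn correspondence underlying Theorem \ref{theoldp}). Applying Theorem \ref{theoldp} in each block yields an LDP at scale $N/K$ for $\hat\mu_{L^{(k)}}$ with good rate $T^V_{\sigma(k/K)}$, and independence upgrades this to a joint LDP at scale $N$ with rate $\frac{1}{K}\sum_k T^V_{\sigma(k/K)}(\mu_k)$. Contraction under $(\mu_k)\mapsto\frac{1}{K}\sum_k\mu_k$ then produces an LDP for $\hat\mu_{L_N^{\sigma_K}}$ with rate
$$T^V_{\sigma_K}(\mu)=\inf\Bigl\{\frac{1}{K}\sum_{k=1}^K T^V_{\sigma(k/K)}(\mu_k) : \mu=\frac{1}{K}\sum_k\mu_k\Bigr\},$$
uniquely minimized by $\frac{1}{K}\sum_k\nu^V_{\sigma(k/K)}$.

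Finally I would let $K\to\infty$: continuity of $P\mapsto \nu^V_P$ (from the characterization in Section \ref{Coulomb} and the differentiability in $P$ established there) gives convergence of $\frac{1}{K}\sum_k\nu^V_{\sigma(k/K)}$ to $\int_0^1\nu^V_{\sigma(P)}\,dP$, and a $\Gamma$-convergence argument identifies the limit rate function as $T^V_\sigma(\mu)=\inf\{\int_0^1 T^V_{\sigma(P)}(\mu_P)\,dP : \mu=\int_0^1\mu_P\, dP\}$. Combined with the exponential contiguity from the first step this delivers all three conclusions; a.s.\ and $L^1$ convergence then follow from Borel--Cantelli and the exponential tightness. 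The main obstacle will be coupling the two limits correctly: one must let $K=K(N)\to\infty$ slowly enough that both the discretization error and the rank-perturbation error remain $o(N)$ on the exponential scale, and verify that the mixture rate function is a good rate function with uniquely attained minimum — uniqueness following from the fact that any minimizing decomposition of a measure achieving $T^V_\sigma=0$ must satisfy $\mu_P=\nu^V_{\sigma(P)}$ for a.e.\ $P$.
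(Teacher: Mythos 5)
Your plan --- approximating $\sigma$ by a stepwise-constant profile, removing the $O(K)$ off-diagonal entries to obtain (up to an $e^{o(N)}$ correction of the tilt, controlled by H\"older on an event where $\frac{1}{N}\tr (L_N)^{2k}$ is bounded) independent constant-parameter Toda blocks, applying the fixed-$P$ large deviation principle blockwise, and passing to the limit in $K$ --- is exactly the argument the paper itself sketches (and leaves to the reader) for Theorem \ref{generalization}, so the approach is correct and essentially identical. Two small points: the Radon--Nikodym comparison between the laws for $\sigma$ and $\sigma_K$ is not bounded pointwise and must be made on a high-probability event controlling $\frac{1}{N}\sum_i|\ln b_i|$ (as in Lemma \ref{exptlog} and the end of Section \ref{general}), and no coupling $K=K(N)$ is needed: for fixed $K$ one gets an LDP and then uses the exponential-approximation theorem, as in Lemma \ref{approx exp beta ens}, to let $K\to\infty$.
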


{\bf Acknowledgments :} We are very grateful to Herbert Spohn for asking us to investigate the convergence of the density of states for general potentials $V$ and many fruitful discussions that followed. We would also like to thank David García-Zelada for showing us how to derive Theorem \ref{David} from \cite{Garcia}.  We thank an anonymous referee for helping us to improve the presentation of our results.

\section{Large deviation principles for tri-diagonal matrices}\label{quad}
In this section, we consider a tri-diagonal  matrix $M_N$ with entries 
\begin{equation}
\label{jacobi}
(M_N)_{j,j} =a_j\text{ and  }(M_N)_{j,j+1}= (M_N)_{j+1,j} = b_j
\end{equation}
with periodic boundary conditions, the  random variables  $(a_i,b_i)_{1\le i\le N}$ being iid, with $(a_1,b_1)$ with law $Q_a\otimes Q_b$ on $\R^2$. We denote by $\hat \mu_{M_{N}}$ the empirical measure of the eigenvalues of $M_N$ and prove the existence of a large deviation principle for the distribution of $\hat\mu_{M_N}$.
In \cite[Theorem 4.2]{zhang}, the author proves a large deviation principle for the empirical moments $\hat\mu_{M_N}(x^k)$ by noticing that
$$\hat\mu_{M_N}(x^k)=\frac{1}{N}\sum_{i=1}^N f_k(a_j,b_j, |i-j|\le k)$$
where  $f_k(a_j,b_j, |i-j|\le k)=(M_N^k)_{ii}$ is an homogeneous  polynomial of degree $k$ in the entries $a_j,b_j, |i-j|\le k$. Noting that $f_{k}$ does not depend on $i$, 
one can use the large deviation principle for Markov chains (or $k$-dependent large deviation principle), see e.g  \cite[Theorem 3.1.2 or Section 6.5.2]{DZ}, as well as the contraction principle, to deduce a large deviation principle for the distribution of the  empirical moments $\{\hat\mu_{M_N}(x^k), k\le p\}$. 
This could be used to deduce the existence of a large deviation principle for $\hat\mu_{M_{N}}$ for the weak topology after approximations, but the rate function would not be particularly explicit. We prefer to develop a more straightforward sub-additivity argument and prove separately the existence of a weak large deviation principle and exponential tightness, see e.g \cite[Lemma 1.2.18]{DZ}. 
\subsection{Exponential tightness}
In this  section we assume that
\begin{assumption}\label{ass} 
There exists $\gamma>0$ such that
$$D_\gamma:=\int e^{\gamma x^2}dQ_a(x)\times \int e^{\gamma y^2}dQ_b(y)<\infty\,.$$
\end{assumption} 
We equip the set of probability measures on the real line $\mathcal P(\mathbb R)$ with the weak topology. 
We then show that 
\begin{lemma}\label{exptight}
If $(a_{j},b_{j})_{1\le j\le N}$ are iid with law $Q_{a}\otimes Q_{b}$ satisfying  Assumption \ref{ass}, the sequence $(\hat\mu_{M_{N}})_{N\ge 0}$ is exponentially tight, namely for each $L\ge 0$ there exists a compact set $K_L$ ($K_L=\{\mu\in \mathcal P(\mathbb R):\int x^2 d\mu(x)\le  \frac{2}{\gamma}(L+\ln  D_\gamma)\}$ with $\gamma$ as in  Assumption \ref{ass}) such that
\begin{equation}
\label{def tension exponentielle}
\limsup_N \frac{1}{N} \ln  \P( \hat{\mu}_{M_N} \in K_{L}^c ) < -L.
\end{equation}
\end{lemma}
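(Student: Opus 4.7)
The plan is to reduce exponential tightness of $\hat\mu_{M_N}$ to controlling its second moment, which, thanks to the tridiagonal structure with periodic boundary, is an i.i.d.\ sum. A direct computation gives
$$\int x^2 \, d\hat\mu_{M_N}(x) = \frac{1}{N}\tr(M_N^2) = \frac{1}{N}\sum_{j=1}^N \bigl(a_j^2 + 2 b_j^2\bigr),$$
so the problem becomes purely scalar. Each $b_j^2$ appears twice in $\tr(M_N^2)$ because $b_j$ occupies both $(j,j{+}1)$ and $(j{+}1,j)$.

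I would first observe that $\mathcal K_R := \{\mu \in \mathcal P(\R) : \int x^2 \, d\mu(x) \le R\}$ is weakly compact for each $R>0$: tightness follows from Markov's inequality ($\mu([-M,M]^c) \le R/M^2$ uniformly in $\mu \in \mathcal K_R$), and closedness from the lower semi-continuity of $\mu \mapsto \int x^2 \, d\mu$ under weak convergence (a monotone truncation argument applied to $x^2 \wedge M$). Then, by independence of the pairs $(a_j, b_j)_{1 \le j \le N}$ and of $a_1, b_1$, Chebyshev's inequality yields, for any $\delta \in (0, \gamma/2]$,
\begin{equation*}
\P\bigl(\hat\mu_{M_N} \notin \mathcal K_R\bigr) \le e^{-\delta N R}\left(\int e^{\delta x^2} dQ_a(x)\right)^N\left(\int e^{2\delta y^2} dQ_b(y)\right)^N.
\end{equation*}
Taking $\delta = \gamma/2$ and using $e^{tu^2} \le e^{\gamma u^2}$ for $t \le \gamma$, the bracketed product is at most $D_\gamma$, so
$$\frac{1}{N}\ln \P\bigl(\hat\mu_{M_N} \notin \mathcal K_R\bigr) \le -\tfrac{\gamma R}{2} + \ln D_\gamma.$$
Given $L>0$, choosing $R = R_L := \tfrac{2}{\gamma}(L+1+\ln D_\gamma)$ and setting $K_L := \mathcal K_{R_L}$ produces a weakly compact set with $\limsup_N \tfrac{1}{N}\ln \P(\hat\mu_{M_N} \notin K_L) \le -L-1 < -L$, which is \eqref{def tension exponentielle}.

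There is no real obstacle here: the proof is essentially a single Chebyshev estimate on an i.i.d.\ sum. The only point requiring a bit of care is the factor $2$ multiplying $b_j^2$ in $\tr(M_N^2)$; this is why one must take $\delta \le \gamma/2$ and why Assumption~\ref{ass} is formulated to give room on both $Q_a$ and $Q_b$. Everything else (compactness of the sub-level sets of the second moment, and the explicit choice of $R_L$) is bookkeeping.
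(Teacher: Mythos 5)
Your proof is correct and follows essentially the same route as the paper: the identity $\int x^2\,d\hat\mu_{M_N}=\frac1N\tr(M_N^2)$ reduces the problem to an i.i.d.\ sum, a single Chebyshev (exponential Markov) bound with exponent $\gamma/2$ gives the estimate via $D_\gamma$, and the compact set is the same second-moment sublevel set (your extra ``$+1$'' in the radius just makes the strict inequality explicit).
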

\begin{proof}
For $N\geq 1$, notice that
\begin{eqnarray}
\label{tension exponentielle}
\int x^2 \text{d}\hat{\mu}_{M_{N}}(x)&=&\frac{1}{N}\tr(M_N^2)\nonumber\\
&=&\frac{1}{N}\sum_{j=1}^N \left((M_N)_{j,j}\right)^2 +\frac{1}{N}\sum_{j=1}^N \left(\sqrt{2}(M_N)_{j,j+1}\right)^2.
\end{eqnarray}
As a consequence, Tchebychev's inequality implies that, for any $\gamma>0$, 
\begin{eqnarray*}
\P\left( \int x^2 \text{d}\hat{\mu}_{M_N}(x)>K\right)&\le & e^{-\frac{1}{2}\gamma NK} \E[e^{\frac{1}{2}N \gamma \int  x^2 \text{d}\hat{\mu}_{M_N}(x)}]\le  e^{-\frac{1}{2}\gamma NK} D_\gamma^{N}\,.\end{eqnarray*}
The conclusion follows by taking $K=\frac{2}{\gamma}(L+\ln  D_\gamma)$.

\end{proof}
\subsection{Weak large deviation principle} 
We next establish a weak large deviation principle, based on the general ideas developed in \cite[ Lemma 6.1.7]{DZ}.
To this end, we use the  following distance on $\mathcal P(\R)$:
\begin{equation}
\label{distance BV}
d(\mu,\nu) = \sup_{\|f\|_\text{BV} \leq 1, |f|_\text{Lip}\leq 1} \left\lbrace \left| \int_\R f(x) d\mu(x) - \int_\R f (x) d\nu(x) \right| \right\rbrace,
\end{equation}
where $\|f\|_\text{BV}$  is the total variation norm of $f$
given by
$$
\|f\|_\text{BV}= \sup \sum_{k\in \N} |f(x_{k+1})-f(x_k)|,
$$
where the supremum holds over all increasing sequences 
$(x_k)_{k\in \N}\in \mathbb R^{\N}$. $\|f\|_L$ is the Lipschitz norm of $f$. If $f$ is continuously differentiable  and we put without loss of generality $f(0)=0$, $\|f\|_{BV}=\int_{-\infty}^{+\infty} |f'(y)|dy$ and $\|f\|_L=\|f'\|_\infty$. 
The distance $d$  is smaller than the Wasserstein distance where one takes the supremum over all functions whose $L^\infty$ and Lipschitz norms are bounded by one, and is easily seen to be as well compatible with the weak topology.
Then, we shall prove that if $ B_\mu(\delta)=\{\nu\in \mathcal P(\R): d(\mu,\nu)<\delta\}$ denotes the open ball with radius $\delta$ centered at $\mu$, we have : 
\begin{lemma}\label{wldp}
For any $\mu$ in $\mathcal P(\R)$,
there exists a limit
\begin{equation}
\label{PGD faible}
\lim_{\delta \to 0} \liminf_N \frac{1}{N}\ln  \P\left( \hat{\mu}_{M_N} \in B_\mu(\delta)\right) = \lim_{\delta \to 0} \limsup_N \frac{1}{N}\ln  \P\left( \hat{\mu}_{M_N} \in B_\mu(\delta)\right).
\end{equation}
We denote this limit by $-J_M(\mu)$.
\end{lemma}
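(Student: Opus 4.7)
The plan is a Fekete-style super-additivity argument adapted to the tri-diagonal setting. The central observation is that, thanks to the i.i.d.\ product structure on the entries $(a_i,b_i)$ and the nearly one-dimensional geometry of $M_N$, for any decomposition $M=qN_0+r$ with $0\leq r<N_0$ one can realize $M_M$ as a ``concatenation'' of $q+1$ independent periodic tri-diagonal blocks $M_{N_0}^{(1)},\dots,M_{N_0}^{(q)},M_r$ (of the correct marginal laws) together with $q+1$ extra ``bridge'' off-diagonal entries connecting them into the cycle of length $M$; the resulting matrix differs from the block-diagonal assembly by a rank $O(q)$ perturbation. The key analytic input is therefore the following stability estimate: if $A,B$ are symmetric matrices of size $N$ with $\mathrm{rank}(A-B)\leq k$, then Cauchy's interlacing gives $\|F_A-F_B\|_\infty\leq k/N$, where $F_A$ denotes the c.d.f.\ of $\hat\mu_A$, and so, by integration by parts, for any $f$ with $\|f\|_{\mathrm{BV}}\leq 1$,
\[
\Bigl|\int f\,d(\hat\mu_A-\hat\mu_B)\Bigr|=\Bigl|\int(F_A-F_B)\,df\Bigr|\leq k/N,
\]
giving $d(\hat\mu_A,\hat\mu_B)\leq k/N$. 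In particular $d$ is convex, and $d(\rho,\nu)\leq 2$ for any two probability measures $\rho,\nu$ (by translating $f$ so that $\|f\|_\infty\leq \|f\|_{\mathrm{BV}}\leq 1$).

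Set $p_N(\delta):=\P(\hat\mu_{M_N}\in B_\mu(\delta))$. The coupling sketched above is legitimate because the entries of $M_M$ are i.i.d.: the $q+1$ ``cyclic'' off-diagonals of the small blocks (which do not appear in $M_M$) are drawn as fresh independent $Q_b$ variables, so each block has its correct periodic tri-diagonal law and the blocks are mutually independent. On the event $\mathcal{E}:=\bigcap_{i=1}^{q}\{\hat\mu_{M_{N_0}^{(i)}}\in B_\mu(\delta)\}$, the convex combination $\tfrac{qN_0}{M}\bar\nu+\tfrac{r}{M}\hat\mu_{M_r}$, where $\bar\nu$ is the average of the $\hat\mu_{M_{N_0}^{(i)}}$'s, lies in $B_\mu(\delta+2r/M)\subset B_\mu(\delta+2N_0/M)$ by convexity of $d$ and the trivial bound $d(\hat\mu_{M_r},\mu)\leq 2$. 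Combining with the rank $2(q+1)$ perturbation bound for the bridges and the block-cyclic modifications,
\[
d(\hat\mu_{M_M},\mu)\leq \delta+C(q+1)/M+2N_0/M\leq \delta+\epsilon,
\]
provided $N_0\geq N_0(\epsilon)$ and $M\geq M_0(N_0,\epsilon)$. By independence of the blocks, $\P(\mathcal{E})=p_{N_0}(\delta)^{q}$, and therefore
\[
p_M(\delta+\epsilon)\geq p_{N_0}(\delta)^{q}.
\]

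Dividing by $M$, taking logarithms and letting $M\to\infty$ with $N_0$ fixed (so $q/M\to 1/N_0$) yields $\liminf_{M\to\infty}\tfrac{1}{M}\ln p_M(\delta+\epsilon)\geq\tfrac{1}{N_0}\ln p_{N_0}(\delta)$. Since $N_0\geq N_0(\epsilon)$ is arbitrary, taking $\limsup$ in $N_0$ on the right and combining with the trivial $\liminf\leq\limsup$ gives
\[
\limsup_{N\to\infty}\tfrac{1}{N}\ln p_N(\delta)\leq\liminf_{M\to\infty}\tfrac{1}{M}\ln p_M(\delta+\epsilon)\leq\limsup_{M\to\infty}\tfrac{1}{M}\ln p_M(\delta+\epsilon).
\]
Using that $\delta\mapsto p_N(\delta)$ is non-decreasing and sending first $\delta\to 0$ and then $\epsilon\to 0$ forces $\lim_{\delta\to 0}\liminf_N\tfrac{1}{N}\ln p_N(\delta)=\lim_{\delta\to 0}\limsup_N\tfrac{1}{N}\ln p_N(\delta)$, which is exactly~\eqref{PGD faible}; the common value is denoted $-J_M(\mu)$. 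The main technical obstacle is setting up the coupling: one must split the entries of $M_M$ into block interiors and bridges and supplement with $q+1$ fresh independent draws for the block-cyclic entries in such a way that the marginals of $M_{N_0}^{(1)},\dots,M_{N_0}^{(q)},M_r$ are exactly independent periodic tri-diagonal matrices; once this is in place, the rank-perturbation bound, convexity of $d$, and the Fekete-type passage are all routine.
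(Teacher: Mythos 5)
Your argument is correct and takes essentially the same route as the paper: the same decomposition of the periodic tridiagonal matrix into independent periodic blocks plus a rank-$O(q)$ perturbation, the same interlacing/BV bound $d(\hat\mu_A,\hat\mu_B)\le \mathrm{rank}(A-B)/N$, and the same subadditivity (Fekete) passage with $\delta$, then $\epsilon$, sent to zero. The only harmless deviations are that you draw fresh corner entries for the blocks where the paper recycles the bridge entries $b_{kq}$, and that you absorb the size-$r$ remainder block via the trivial bound $d\le 2$ weighted by $r/M$ rather than requiring its spectral measure to lie in $B_\mu(\delta)$.
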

\begin{proof}
The advantage of the distance $d$ is the following control:  For any symmetric $N\times N$ matrices 
$A$ and  $B$  with empirical measures of eigenvalues  $\hat{\mu}_A$ and  $\hat{\mu}_B$, we have:
\begin{equation}
\label{super inegalite}
d(\hat{\mu}_A,\hat{\mu}_B)\leq \min\left\{ \frac{\rank(A-B)}{N},\frac{1}{N}\sum_{i,j}|A(i,j)-B(i,j)| \right\}.
\end{equation}
Indeed, for any function $f$ with bounded variation we have thanks to Weyl interlacing property, see e.g. \cite[(1.17)]{GuioFlour}, 
\begin{equation}\label{bound12}\left|\int fd\hat\mu_A-\int fd\hat\mu_B\right|\le \frac{1}{N}\rank({A-B})\,.\end{equation}
Moreover, one can check that, if $f$ is continuously differentiable, we have
\begin{eqnarray*}
\int fd\hat\mu_A-\int fd\hat\mu_B&=&\int_0^1 \frac{1}{N}\tr\left( (A-B)f'(\alpha A+(1-\alpha)B)\right)d\alpha \\
&=& \int_0^1 \left(\frac{1}{N}\sum_{i,j=1}^N  (A-B)_{ij} f'(\alpha A+(1-\alpha)B)_{ji} \right)d\alpha\end{eqnarray*}
which implies since for all indices $i,j$,  $|f'(\alpha A+(1-\alpha)B)_{ji}|\le\|f'\|_\infty$ that
\begin{equation}\label{bound2}\left|\int fd\hat\mu_A-\int fd\hat\mu_B\right|\le \|f'\|_\infty \frac{1}{N}\sum_{i,j=1}^N |(A-B)_{ij}|\,.\end{equation}
Since continuously differentiable  functions with bounded $L^\infty$ norm are dense in Lipschitz functions, we deduce \eqref{super inegalite} from \eqref{bound12} and \eqref{bound2}. 
We are now ready to prove
Lemma \ref{wldp}.  To this end, we shall approximate our matrix $M_N$ by a diagonal block matrix with independent blocks. 
Let $q\geq 1$. For  $N \geq 1$ we  decompose $N=k_Nq +r_N$ with  $r_N \in \left\lbrace 0,\ldots, q-1\right\rbrace$ and set  $M_N=M_N^q + R_N^q$, where $M_N^q$ is the  diagonal block matrix

\begin{equation}
\label{matrice decoupe}
  M_N^q =
  \begin{bmatrix}
    M_q^1 & & &\\
    & \ddots & &\\
    & & M_q^{k_N} & \\
    & & & B
  \end{bmatrix}.
\end{equation}
Here,  for all 
 $i\in \left\lbrace 1,\ldots,k_N\right\rbrace$,  $M_q^i$ has the same distribution than $ M_q$ and $B$ the same distribution than $  M_{r_N}$. The matrices $M_q^i, 1\le i\le k_{N},$ are independent, and are independent from $B$. 
 $R_N^q$ is the self-adjoint matrix with null entries except  $R_N^q(1,N)=R_N^q(N,1)=b_N$, $R_{N}^{q}(k_{N}q+1,N)=R_{N}^{q}(N,k_{N} q+1)=-b_{N}$, 
  and those given, for $k\in \{1,\ldots, k_N\}$,  by $R_N^q(kq+1,kq)=R_N^q(kq,kq+1)=b_{kq}$,  $R_N^q((k-1)q+1,kq))=R_N^q(kq,(k-1)q+1)=-b_{kq}$. Therefore 
$\rank (R_N^q) \leq 2k_N +2 \leq 4k_N$. 
By (\ref{super inegalite}), we deduce that 
\begin{equation}
\label{inegalite R}
d(\hat{\mu}_{M_N},\hat{\mu}_{M_N^q})\leq \frac{4}{q}.
\end{equation}
Moreover, we can write
$\hat{\mu}_{M_N^q}$ as the sum
$$
\hat{\mu}_{M_N^q} = \sum_{i=1}^{k_N}\frac{q}{N} \hat{\mu}_{M^i_q} + \frac{r_N}{N}\hat{\mu}_{B}\,.
$$
Therefore, for any $\mu\in \mathcal{P}(\R)$ and $\delta >0$, we have 
\begin{align*}
\P\left( \hat{\mu}_{M^1_q} \in B_\mu(\delta) \right)^{k_N}\P\left( \hat{\mu}_{M_{r_N}} \in B_\mu(\delta) \right) &= \P\left(\forall\ i\in\{1,\ldots,k_N\},\  \hat{\mu}_{M^i_q} \in B_\mu(\delta),\ \hat{\mu}_{B} \in B_\mu(\delta)  \right) \\
&\leq  \P\left(\hat{\mu}_{M_N^q} \in B_\mu(\delta) \right) \\
&\leq  \P\left( \hat{\mu}_{M_N} \in B_\mu(\delta + \frac{4}{q}) \right),
\end{align*} where we used the convexity of balls and  (\ref{inegalite R}).  As a consequence, 
$$ u_N(\delta): = -\ln  \P\left( \hat{\mu}_{M_N} \in B_\mu(\delta) \right)$$
satisfies
$$
u_N(\delta + 4/q) \leq k_Nu_q(\delta)+u_{r_N}(\delta).
$$
It is easy (and classical) to deduce the convergence of  {$u_N(\delta)/N$ when $N$ goes to infinity, and then  $\delta$ goes to zero}.
Indeed let  $\delta>0$ be given and choose $q$ large enough so that  $\frac{4}{q}< \delta$. Then, since $\delta\rightarrow u_N(\delta)$ is decreasing and non-negative, we have:
\begin{equation}
\frac{u_N(2\delta)}{N} \leq \frac{u_N(\delta + 4/q)}{N} \leq \frac{u_q(\delta)}{q} + \frac{u_{r_N}(\delta)}{N}\,.
\end{equation}
Since  $\frac{u_{r_N}(\delta)}{N} \leq \frac{ \max_{1\leq i \leq q-1} u_i(\delta) }{N} $  goes to zero  when $N\to \infty$, we conclude that
$$
\limsup_N \frac{u_N(2\delta)}{N} \leq \frac{u_q(\delta)}{q}\,.
$$ Since this is true for all $q$ large enough, 
we get
$$
\limsup_N \frac{u_N(2\delta)}{N} \leq \liminf_N \frac{u_N(\delta)}{N}\,.
$$
Since the left and right hand sides decrease as $\delta$ goes to zero, we conclude that
$$
\lim_{\delta\rightarrow 0} \limsup_{N \rightarrow \infty}  -\frac{1}{N} \ln \P \left( \hat{\mu}_{M_N} \in B_\mu(\delta) \right)\le \lim_{\delta\rightarrow 0} \liminf_{N \rightarrow \infty}  -\frac{1}{N} \ln  \P\left( \hat{\mu}_{M_N} \in B_\mu(\delta) \right)\,,$$
and the conclusion follows.
\end{proof} 

\subsection{Full large deviation principle}
As a consequence of Lemmas \ref{exptight} and \ref{wldp},  we have by \cite[Theorem 1.2.18]{DZ} the following large deviation principle.
\begin{theorem}\label{ldpgen} Under Assumption \ref{ass}, the law of $\hat \mu_M$ satisfies a large deviation principle  in the scale $N$ with a good rate function $J_M$. Moreover, $J_M$ is convex.
In other words, 
\begin{itemize} \item $J_M:\mathcal P(\R)\rightarrow [0,+\infty]$ has compact level sets $\{\mu:J_M(\mu)\le L\}$ for all $L\ge 0$. Moreover, $J_M$ is convex.
\item For any closed set $F\subset \mathcal P(\R)$,
$$\limsup_{N\rightarrow\infty}\frac{1}{N}\ln  \P(\hat\mu_{M_N}\in F)\le -\inf_F J_M\,,$$
whereas for any open set $O\subset \mathcal P(\R)$
$$\liminf_{N\rightarrow\infty}\frac{1}{N}\ln  \P(\hat\mu_{M_N}\in O)\ge -\inf_O J_M\,.$$
\end{itemize}
\end{theorem}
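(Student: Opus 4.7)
The plan is to combine the two preceding lemmas. Exponential tightness from Lemma \ref{exptight} together with the weak large deviation principle from Lemma \ref{wldp} yields the full large deviation principle with a good rate function via \cite[Lemma 1.2.18]{DZ}. The rate function is
$$J_M(\mu) = \lim_{\delta \to 0} \limsup_N -\frac{1}{N}\ln \P(\hat\mu_{M_N} \in B_\mu(\delta)),$$
and its level sets $\{J_M \leq L\}$ are compact, being closed by lower semicontinuity and contained in $K_{L'}$ for any $L' > L$ (the upper bound applied to the closed set $K_{L'}^c$ forces $\inf_{K_{L'}^c} J_M \geq L' > L$ by Lemma \ref{exptight}). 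The upper and lower bounds over arbitrary closed/open sets follow from the standard weak-LDP-plus-exponential-tightness mechanism.

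The substantive new point is the convexity of $J_M$. I would adapt the block decomposition used in the proof of Lemma \ref{wldp}. Fix $\mu_1,\mu_2 \in \mathcal{P}(\R)$, a rational $t = p/q \in (0,1)$, and $\delta > 0$. Write $M_N = M_N^q + R_N^q$ with $N = k_N q + r_N$ and $k_N$ independent blocks of size $q$ as in \eqref{matrice decoupe}. Consider the event that the first $\lfloor t k_N \rfloor$ blocks $M_q^i$ have spectral measure in $B_{\mu_1}(\delta)$ and the remaining $k_N - \lfloor t k_N \rfloor$ blocks have spectral measure in $B_{\mu_2}(\delta)$. On this event, since $\hat\mu_{M_N^q}$ is the convex combination of the block empirical measures with weights $q/N$ plus a negligible $(r_N/N)\hat\mu_B$ contribution, one checks that $d(\hat\mu_{M_N^q}, t\mu_1 + (1-t)\mu_2) \leq \delta + \varepsilon_N$ with $\varepsilon_N \to 0$; combined with \eqref{inegalite R} this gives $d(\hat\mu_{M_N}, t\mu_1 + (1-t)\mu_2) \leq \delta + 4/q + \varepsilon_N$.

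Using the independence of the blocks then yields
$$\P\!\left(\hat\mu_{M_N} \in B_{t\mu_1 + (1-t)\mu_2}(\delta + 4/q + \varepsilon_N)\right) \geq \P(\hat\mu_{M_q} \in B_{\mu_1}(\delta))^{\lfloor t k_N\rfloor}\,\P(\hat\mu_{M_q} \in B_{\mu_2}(\delta))^{k_N - \lfloor t k_N\rfloor}.$$
Taking $-\frac{1}{N}\ln$ of both sides, letting $N \to \infty$, then $q \to \infty$, and finally $\delta \to 0$, yields $J_M(t\mu_1 + (1-t)\mu_2) \leq t J_M(\mu_1) + (1-t) J_M(\mu_2)$ for every rational $t$; the case of arbitrary $t \in [0,1]$ follows from lower semicontinuity of $J_M$ along a rational sequence $t_n \to t$. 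The step I expect to be the main obstacle is this convexity argument, specifically the clean bookkeeping of the three sources of error (the boundary term $(r_N/N)\hat\mu_B$, the integer rounding $\lfloor t k_N \rfloor / k_N - t$, and the rank-two perturbation bound $4/q$), but each is of a type already controlled in Lemma \ref{wldp}.
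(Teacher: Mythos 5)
Your proposal is correct, and the first part (weak LDP from Lemma \ref{wldp} plus exponential tightness from Lemma \ref{exptight}, then \cite[Lemma 1.2.18]{DZ} for the full LDP with a good rate function) is exactly what the paper does; the only difference lies in how convexity is obtained. The paper proves just \emph{midpoint} convexity: it compares $M_{2N}$ with the block-diagonal matrix formed by two independent copies of $M_N$, which differs from $M_{2N}$ by a perturbation of rank $O(1)$, so by \eqref{bound12} the event $\{d(\hat\mu_{M_N},\mu_1)<\delta_1\}\cap\{d(\hat\mu_{M_N}',\mu_2)<\delta_2\}$ forces $\hat\mu_{M_{2N}}$ into a slightly larger ball around $\tfrac12(\mu_1+\mu_2)$; independence then gives $J_M(\tfrac12(\mu_1+\mu_2))\le\tfrac12(J_M(\mu_1)+J_M(\mu_2))$, and convexity follows since $J_M$ is lower semicontinuous. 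You instead reuse the $q$-block decomposition \eqref{matrice decoupe} and assign a fraction $t$ of the $k_N$ blocks to $B_{\mu_1}(\delta)$ and the rest to $B_{\mu_2}(\delta)$, which yields convexity at general $t$ directly; the three error sources you list (boundary block, rounding, the $4/q$ from \eqref{inegalite R}) are indeed all controlled as in Lemma \ref{wldp} (note $d\le 1$ on $\mathcal P(\R)\times\mathcal P(\R)$, so the weight-$r_N/N$ block and the rounding contribute $o(1)$), and for fixed $\delta$ one has $\limsup_q u_q^{(j)}(\delta)/q\le J_M(\mu_j)$ by monotonicity in $\delta$, so no joint-subsequence issue arises; in fact nothing in your argument requires $t$ rational, so the final lower-semicontinuity step is superfluous (whereas the paper genuinely needs it, midpoint convexity alone being weaker). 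Your route costs a little more bookkeeping but is self-contained at every $t$; the paper's is shorter. One small inaccuracy: to show $\{J_M\le L\}\subset K_{L'}$ you invoke the upper bound on the ``closed set $K_{L'}^c$'', but $K_{L'}^c$ is open; either apply the \emph{lower} bound on this open set together with \eqref{def tension exponentielle}, or simply note that goodness of the rate function is already part of \cite[Lemma 1.2.18]{DZ}, as the paper does.
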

\begin{proof}$J_M$ exists and is defined by Lemma \ref{wldp}. The lower semi-continuity of $J_M$ follows from \cite[Theorem 4.1.11]{DZ}.  We then deduce that the level sets of $J_M$ are compact by the exponential tightness, see \cite[Lemma 1.2.18 (b)]{DZ}.

In the spirit of \cite[Lemma  4.1.21]{DZ}, we show that $J_M$ is convex. Let  $\mu_1$, $\mu_2 \in \mathcal{P}(\R)$.  Since $\hat\mu_{M_{2N}}$ can be decomposed as the independent sum of $\hat\mu_{M_{N}}$ divided  by 2 plus an error term of  smaller than $4/N$ by \eqref{bound12},  we have for all $\delta_1,\delta_2>0$
\begin{equation}
\label{quasi convexite}
 \P\left(d(\hat{\mu}_{M_N},\mu_1)<\delta_1\right) \P\left(d(\hat{\mu}_{M_N},\mu_2)<\delta_2\right)  \leq  \P\left(d(\hat{\mu}_{M_{2N}},\frac{\mu_1+\mu_2}{2})<\delta_3\right).
\end{equation}
for any    $\delta_3\ge \frac{1}{2}(\delta_1+\delta_2)+\frac{4}{N}$. Taking the logarithm, dividing by $2N$ and  letting $N$ go to infinity, $\delta_1,\delta_2$ and then $\delta_3$ to zero,  we conclude that  
\begin{equation}
J_M\left(\frac{\mu_1 + \mu_2}{2}\right) \leq  \frac{1}{2}\bigg( J_M(\mu_1)+J_M(\mu_2) \bigg),
\end{equation}
{from which we deduce the convexity of $J_M$ as in \cite[Lemma  4.1.21]{DZ}.}\\
The second point, namely that a weak large deviation principle and exponential tightness implies a full large deviation principle,  is classical, see \cite[Lemma 1.2.18]{DZ}.
\end{proof}
\subsection{Large deviation principle for the Toda-Chain with quadratic potential}
{Recall that we denoted by $Q_a$ and $Q_b$ respectively the laws of the $a_i$'s and $b_i$'s, see \eqref{jacobi}.}
In the case of the Toda chain with Gaussian potential, that is $V=0$, with entries following $\T_N^{P}$, we take $Q_a$ to be the standard Gaussian law and $Q_b$ to be the chi distribution  $\sqrt{2}^{-1}\chi_{2P}$ given in \eqref{loi du chi}. We let $L_N(P)$  be the tridiagonal matrix whose entries follow $\T^{P}_N$.
These entries clearly satisfy Assumption \ref{ass} and therefore we have
\begin{corollary}\label{ldpcor} For any $P>0$, 
the law of $\hat\mu_{L_N(P)}$  satisfies a large deviation principle in the scale $N$ with good, convex, rate function {denoted by} $T_P$.
\end{corollary}

For further use, we show that
\begin{lemma}
	\label{lemme couplage}
	For each $\mu\in \mathcal{P}(\R)$, the map {$P\in (0,+\infty)\mapsto T_P(\mu)$} is lower semi-continuous.
\end{lemma}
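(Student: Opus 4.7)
The goal is, for fixed $\mu\in\mathcal{P}(\R)$ and a sequence $s_{n}\to s$ in $(0,+\infty)$, to show $T_{s}(\mu)\le L:=\liminf_{n} T_{s_{n}}(\mu)$. The case $L=+\infty$ is trivial; otherwise, after extracting a subsequence, assume $T_{s_{n}}(\mu)\to L$. Writing $\P_{s'}$ for the probability and $\E_{s'}$ for the expectation under $\T_{N}^{s'}$, the plan is to compare $\P_{s}$ and $\P_{s_{n}}$ through their explicit Radon--Nikodym derivative and transfer the LDP lower bound of Corollary~\ref{ldpcor} at $s_{n}$ to $s$. Since under $\T_{N}^{s'}$ the sub-diagonal entries $b_{i}$ are i.i.d.\ with density $2b^{2s'-1}e^{-b^{2}}\Gamma(s')^{-1}\mathbf{1}_{b>0}$ while the diagonal entries are $\mathcal{N}(0,1)$ and do not depend on $s'$, a direct computation gives
\begin{equation*}
\frac{d\T_{N}^{s}}{d\T_{N}^{s'}}(L_{N})=\Bigl(\frac{\Gamma(s')}{\Gamma(s)}\Bigr)^{N}\exp\Bigl(2(s-s')\sum_{i=1}^{N}\ln b_{i}\Bigr).
\end{equation*}
The obstruction is that $\sum_{i}\ln b_{i}$ is not a function of the spectrum of $L_{N}$ alone, so a direct Varadhan-type tilt of the rate function is unavailable.

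I would circumvent this by localizing to $E_{M}:=\{\tfrac{1}{N}|\sum_{i}\ln b_{i}|\le M\}$, on which the Radon--Nikodym factor is bounded below by $(\Gamma(s')/\Gamma(s))^{N}e^{-2|s-s'|MN}$. Under $\T_{N}^{s'}$ the $\ln b_{i}$ are i.i.d.\ with Laplace transform $\E_{s'}[b_{1}^{\lambda}]=\Gamma(s'+\lambda/2)/\Gamma(s')$, finite for $\lambda>-2s'$; Cramér's theorem therefore yields $\P_{s'}(E_{M}^{c})\le e^{-N I_{s'}(M)}$ where $I_{s'}(M)\to+\infty$ as $M\to+\infty$ and $s'\mapsto I_{s'}(M)$ is continuous near $s$, both facts being immediate from the Gamma-function expression of the log-Laplace transform.

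For the synthesis, choose $M$ large enough that $I_{s}(M)>L+1$, so that $I_{s_{n}}(M)>L+\tfrac{1}{2}$ for $n$ large. Fix $\delta,\epsilon>0$. The LDP lower bound of Corollary~\ref{ldpcor} applied to $\T_{N}^{s_{n}}$ gives $\P_{s_{n}}(B_{\mu}(\delta))\ge e^{-N(T_{s_{n}}(\mu)+\epsilon)}$ for $N$ large, and since $T_{s_{n}}(\mu)\to L$ the Cramér bound $\P_{s_{n}}(E_{M}^{c})\le e^{-N(L+1/2)}$ becomes negligible in comparison: eventually $\P_{s_{n}}(B_{\mu}(\delta)\cap E_{M})\ge\tfrac{1}{2}\P_{s_{n}}(B_{\mu}(\delta))$. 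Plugging into the Radon--Nikodym lower bound,
\begin{equation*}
\P_{s}(B_{\mu}(\delta))\ge \tfrac{1}{2}\Bigl(\frac{\Gamma(s_{n})}{\Gamma(s)}\Bigr)^{N}e^{-2|s-s_{n}|MN}\,\P_{s_{n}}(B_{\mu}(\delta)).
\end{equation*}
Taking $-\tfrac{1}{N}\ln$ and $\liminf_{N}$, then $n\to\infty$ (so that $\Gamma(s_{n})/\Gamma(s)\to 1$ and $|s-s_{n}|M\to 0$), and finally $\epsilon\downarrow 0$, produces $\liminf_{N}(-\tfrac{1}{N}\ln\P_{s}(B_{\mu}(\delta)))\le L$ for every $\delta>0$. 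Since Lemma~\ref{wldp} identifies $T_{s}(\mu)$ with the supremum over $\delta>0$ of this quantity, the bound $T_{s}(\mu)\le L$ follows.

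The main technical point is precisely this two-step control of the tilt: one treats the bulk on $E_{M}$ by the deterministic lower bound on the Radon--Nikodym factor, and discards $E_{M}^{c}$ through a Cramér estimate whose exponential cost must dominate $L$ uniformly for $n$ large. This uniformity rests entirely on the continuity of $I_{s'}(M)$ in $s'$ and on its divergence as $M\to\infty$, which are extracted from the explicit Gamma-function expression of $\Lambda_{s'}$.
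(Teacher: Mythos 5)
Your argument is correct, but it follows a genuinely different route from the paper. The paper couples $L_N(s)$ and $L_N(s+h)$ explicitly, using the additivity of the $\chi^2$ law to write the off-diagonal entries at parameter $s+h$ as $\sqrt{(X^i_s)^2+(X^i_h)^2}$; together with \eqref{super inegalite} this yields the exponential approximation estimate \eqref{inegalite expo}, and the conclusion follows from the theory of exponentially good approximations (\cite[Theorem 4.2.16]{DZ}) combined with the lower semi-continuity in $\mu$ of each rate function $T_{s+h}$. You instead perform a change of measure: you write the explicit Radon--Nikodym derivative $d\T_N^{s}/d\T_N^{s_n}$, localize on the event $E_M=\{\frac1N|\sum_i\ln b_i|\le M\}$ to get a deterministic lower bound on the tilt, discard $E_M^c$ by a Chernoff/Cram\'er estimate whose exponent exceeds $L=\liminf_n T_{s_n}(\mu)$ uniformly in $n$ (which is where the explicit Gamma-function log-Laplace transform and the finiteness of $L$ enter), and transfer the ball lower bounds of Corollary \ref{ldpcor} from $s_n$ to $s$ before invoking the identification of $T_s(\mu)$ in Lemma \ref{wldp}. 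Your route is self-contained and elementary (no appeal to the exponential-approximation machinery), and it adapts to any exponential tilting of the entry laws; its cost is the two-step bookkeeping needed because $\sum_i\ln b_i$ is not a spectral quantity. The paper's coupling, on the other hand, gives a quantitative estimate \eqref{inegalite expo} valid uniformly over all events, which is reused later in the proof of Corollary \ref{continuite minimiseur}, so it carries additional information beyond the present lemma. Two minor points you should make explicit if you write this up: the two-sided tail bound on $E_M^c$ carries a harmless union-bound factor, and the fixed $\epsilon$ must be taken smaller than the gap $\tfrac12$ (say $\epsilon<\tfrac14$) so that $\P_{s_n}(E_M^c)$ is indeed eventually dominated by $\tfrac12\P_{s_n}(B_\mu(\delta))$; neither affects the argument.
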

\begin{proof}Let $P,h$ be positive real numbers.  We first couple 
the matrices 
$(L_N({P}), L_N({P}+h))_N$, where  $L_N({u})$ follows $\T_{N}^{u}$ for $u=P$ and $u=P+h$, in such a way that
 there exists a finite constant $c$ so that
\begin{equation}
	\label{inegalite expo}
	\P\left( d(\hat \mu_{L_N({P})},\hat\mu_{L_N({P}+h)})>\delta \right) \leq e^{N(c-\sqrt{-\ln (h)}\delta/2)}\,.
\end{equation} 

This coupling is done as follows: \\
$\bullet$ The diagonal coefficients are the same set of  standard independent Gaussian variables \\
$\bullet$ The coefficient below and above the diagonal  $X^i_u$, follow a $\sqrt{2}^{-1}\chi_{2 u}$ for $u={P}$ , $u=h$ and ${P}+h$. By definition of the $\chi$ distribution we can construct these variables so that almost surely
\begin{equation}\label{couple}X^i_{{P}+h} =\sqrt{ (X^i_{P})^2 + (X^i_h)^2 }\,.\end{equation}

This coupling allows by \eqref{super inegalite} to write 
$$
d(\hat \mu_{L_N({P})},\hat \mu_{L_N({P}+h)}))\leq \frac{2}{N}\sum_{i=1}^N | X^i_{{P}+h} - X^i_{P} | = \frac{2}{N}\sum_{i=1}^N (X^i_{{P}+h} - X^i_{P})\le \frac{2}{N}\sum_{i=1}^N X^i_h, 
$$
where we ultimately used that, for all $i\in \{1,\ldots,N\}$, $X^i_{{P}+h}\le X^{i}_{h}+X^{i}_{P}$ because $X^{i}_{h}X^{i}_{P}$ is non-negative and \eqref{couple} holds.
Equation \eqref{inegalite expo} follows by Tchebychev inequality since $\E[\exp\{\sqrt{\ln h^{-1}} X^i_h\}]$ is finite, see \eqref{mo}.
\eqref{inegalite expo} implies that $(\hat\mu_{L_N({P}+h)})_{N\ge 0}$  is an exponential approximation of $(\hat\mu_{L_N({P})})_{N\ge 0}$ when $h$ goes to zero. By \cite[Theorem 4.2.16 ]{DZ} , we deduce that  for any $\mu \in \mathcal{P}(\R)$, we have
$$
T_{{P}}(\mu) = \lim_{\delta \to 0} \liminf_{h\to 0} \inf_{B_\mu(\delta)} T_{{P}+h}.
$$
By monotonicity of the right hand side and the lower semi-continuity of $T_{P+h}$ we deduce that, see \cite[(4.1.2)]{DZ}, 
 $$
\lim_{\delta\to0}\inf_{B_\mu(\delta)} T_{{P}+h} = T_{{P}+h}(\mu),
$$
and therefore
$$
T_{P}(\mu) = \lim_{\delta \to 0} \liminf_{h\to 0} \inf_{B_\mu(\delta)} T_{{P}+h} \leq \liminf_{h\to 0} T_{{P}+h}(\mu),
$$
and so ${P}\mapsto T_{P}(\mu)$ is lower semi-continuous. 

\end{proof}
We shall also use later that Corollary \ref{ldpcor} gives a  large deviation principle for the empirical measure of the Toda chain with general bounded continuous potential.
\begin{corollary}\label{continuite minimiseur}
	Let $V$ be a bounded continuous function on the real line  and $P$ be a positive real number. Let   $L_N(P)$ be the tridiagonal matrix whose entries follow $\T^{V,P}_N$. Then:
	\begin{itemize}
		\item The law of $\hat\mu_{L_N(P)}$ satisfies a large deviation principle in the scale $N$ with convex good rate function given, for any $\mu\in \mathcal P(\mathbb R)$,
		$$T_P^V(\mu)=T_P(\mu)+\int Vd\mu-\inf_\nu\{T_P(\nu)+\int Vd\nu\}\,.$$
		\item 
		The set $M_P^V$ where $T_P^V$  achieves its minimum value is a compact convex subset of $\mathcal P(\mathbb R)$. It is continuous in the sense that for any $\varepsilon>0$, there exists $\delta_\varepsilon>0$ such that for all $\delta<\delta_\varepsilon$, any {$P,Q>0$} such that  for ${|P-Q|}\le \delta$,
		$$M_{P}^V\subset (M_{Q}^V)^\varepsilon$$
		where $A^\varepsilon=\{\mu: d(\mu, A)\le \varepsilon\}$.
	\end{itemize}
	
\end{corollary}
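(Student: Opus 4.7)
The first assertion follows from Varadhan's lemma applied to the LDP of Corollary \ref{ldpcor}. Since $\tr V(L_N)=N\int V\,d\hat\mu_{L_N}$, the density of $\T_N^{V,P}$ with respect to $\T_N^P$ is proportional to $\exp(-N\int V\,d\hat\mu_{L_N})$, and $\mu\mapsto\int V\,d\mu$ is bounded continuous on $\mathcal P(\R)$ since $V$ is. Varadhan's lemma then tilts the LDP and gives the announced rate function $T_P^V$; its convexity is inherited from that of $T_P$ (Theorem \ref{ldpgen}) and the linearity of the tilt $\mu\mapsto\int V\,d\mu$. The set $M_P^V=\{T_P^V=0\}$ is then a closed subset of a compact level set of the good rate function $T_P^V$, hence compact, and convex as the zero set of a convex function.

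For the continuity statement I would argue by contradiction. Suppose there exist $\varepsilon_0>0$ and sequences $s_n,t_n$ in a compact subset of $(0,\infty)$ with $|s_n-t_n|\to 0$ and $\mu_n\in M_{s_n}^V$ satisfying $d(\mu_n,M_{t_n}^V)>\varepsilon_0$. The constants $D_\gamma(s)$ in Lemma \ref{exptight} depend continuously on $s$, so the exponential-tightness bound is uniform on the compact, and since $V$ is bounded and $\mu_n$ minimizes $T_{s_n}^V$, the quantity $T_{s_n}(\mu_n)$ stays bounded; hence $\{\mu_n\}$ lies in a fixed compact of $\mathcal P(\R)$, and up to extraction $\mu_n\to\mu_*$, $s_n,t_n\to s_*>0$. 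The two key facts driving the contradiction are (i) the joint lower semicontinuity $T_{s_*}(\mu_*)\le \liminf_n T_{s_n}(\mu_n)$, and (ii) the continuity of $c(s,V):=\inf T_s^V$ at $s=s_*$. Together with $T_{s_n}^V(\mu_n)=0$ they yield $\mu_*\in M_{s_*}^V$; a symmetric application of the same machinery then produces approximate minimizers of $T_{t_n}^V$ close to $\mu_*$, so that $d(\mu_*,M_{t_n}^V)\to 0$, and the triangle inequality $d(\mu_n,M_{t_n}^V)\le d(\mu_n,\mu_*)+d(\mu_*,M_{t_n}^V)\to 0$ contradicts the assumption.

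The delicate step is establishing (i) and (ii). Both should follow from the same exponential coupling $X^i_{s+h}=\sqrt{(X^i_s)^2+(X^i_h)^2}$ used in the preceding lemma: carrying the bound \eqref{inegalite expo} through the LDP lower and upper bounds with moving balls $B_{\mu_n}(\delta)$ centered at converging $\mu_n$ gives joint lsc of $(s,\mu)\mapsto T_s(\mu)$ in a neighborhood of $s_*$, and combining with Varadhan's lemma for the bounded continuous $V$ gives continuity of $s\mapsto c(s,V)$. Once these uniform refinements of the preceding lemma are in place, the rest of the argument is a standard compactness/contradiction routine.
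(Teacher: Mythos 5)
Your first bullet and the compactness/convexity of $M_P^V$ are correct and coincide with the paper's argument (Varadhan's lemma applied to Corollary \ref{ldpcor}). Your ingredients (i) and (ii) are also obtainable: joint lower semicontinuity of $(s,\mu)\mapsto T_s(\mu)$ does follow from the exponential approximation \eqref{inegalite expo} via \cite[Theorem 4.2.16]{DZ}, and continuity of $s\mapsto\inf_\nu\{T_s(\nu)+\int_\R V\,d\nu\}$ follows from the same coupling together with the boundedness of $V$; this is essentially the partition-function comparison that the paper itself carries out.

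The genuine gap is the step ``a symmetric application of the same machinery produces approximate minimizers of $T_{t_n}^V$ close to $\mu_*$, so that $d(\mu_*,M_{t_n}^V)\to 0$.'' What your machinery (liminf inequality for the rate functions, convergence of the minimal values, equi-coercivity) gives is only the outer statement: every sequence $\nu_n\in M_{t_n}^V$ has all its cluster points in $M_{s_*}^V$, i.e.\ $M_{t_n}^V\subset (M_{s_*}^V)^{\varepsilon}$ eventually. It does not give the inner statement $d(\mu_*,M_{t_n}^V)\to 0$, which asserts that the particular limit minimizer $\mu_*$ is approximated by minimizers at the parameters $t_n$; that is lower semicontinuity of the set-valued map $s\mapsto M_s^V$, and Gamma-convergence-type arguments never provide it without uniqueness of the minimizer. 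Uniqueness is precisely what is \emph{not} available at this stage: here $M_P^V$ is only known to be a compact convex set (uniqueness is obtained much later via the Coulomb-gas identification, and the present corollary is exactly what is used to build a continuous selection before that). If $M_{s_*}^V$ has diameter larger than $\varepsilon_0$, the scenario where $\mu_n$ sits near one part of $M_{s_*}^V$ while $M_{t_n}^V$ clusters near another part is not excluded, so your contradiction does not close. The argument does work if you freeze $t_n\equiv t$: then $\mu_*\in M_t^V$ and $d(\mu_n,M_t^V)\le d(\mu_n,\mu_*)\to 0$ directly, and this fixed-$t$ version is in fact all that the paper's own proof delivers (its smallness condition on $|s-t|$ involves $\inf_{d(\mu,M_t^V)\ge \varepsilon/2}T_t^V$, hence depends on $t$); note also that your restriction of $s_n,t_n$ to a compact subset of $(0,\infty)$ already gives up the literal uniformity over $\R^{+*}$, so nothing is lost by fixing $t$. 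The paper itself proceeds quite differently on this point: instead of a compactness/contradiction argument it proves the inclusion by a direct change-of-measure estimate between $\T_N^{V,s}$ and $\T_N^{V,t}$ using the coupling \eqref{inegalite expo}, which bypasses any lower-semicontinuity issue for the minimizer sets.
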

\begin{proof}
The first point is a direct consequence of 
 Varadhan's lemma since when $V$ is bounded continuous, $\mu \rightarrow \int V(x) d\mu(x)$ is also continuous. We hence need only to  prove the second point, that is the continuity of ${P}\in (0,+\infty)\mapsto M_{P}^V$.  Note that since $T_P^V$ is  a good rate function, $M_{P}^V$ is compact for all positive real number $P$.
We let $\T_N$  be the coupling of $L_N({P})$ and $L_N({Q})$ introduced in Lemma \ref{lemme couplage}. By definition, for ${R=P}$ and ${Q}$, $B$  a measurable subset of $\mathcal P(\mathbb R)$,  we have 
$$\T^{V,{R}}_N(\hat\mu_{L_N}\in B)=\frac{1}{Z_{N,\T}^{V,{R}}} \int 1_{\{\hat\mu_{L_N({R}) }\in  B\}} e^{-N\int V(x) d\hat\mu_{L_N({R})}(x)}d\T_N,$$
where we used the notation
$$Z_{N,\T}^{V,{R}} = \int e^{-N\int V(x)d\hat{\mu}_{L_N({R})}(x)}d\T^{{R}}_N. $$
Therefore, since $ ( (M_{Q}^V)^\varepsilon)^c$ is open, we can use {the large deviation principle for the empirical measure of $L_N(P)$, Corollary \ref{ldpcor},} to state that for any $\kappa>0$
\begin{eqnarray}
&&-\inf_{( (M_{Q}^V)^\varepsilon)^c}T_{P}^V\le \limsup_{N\rightarrow\infty}\frac{1}{N}\ln \frac{1}{Z_{N,\T}^{V,{P}}} \int_{\{d(\hat\mu_{L_N({P})}, M_{Q}^V)>\varepsilon\}} 
e^{-N\int V(x) d\hat\mu_{L_N({P})}(x)} d\T_N
\nonumber\\
&&\qquad \leq \max\{ \limsup_{N\rightarrow\infty}\frac{1}{N}\ln \frac{1}{Z_{N,\T}^{V,{P}}} \int_{\{d(\hat\mu_{L_N({P})}, M_{Q}^V)>\varepsilon\}\cap\{d(\hat\mu_{L_N({P})},\hat\mu_{L_N({Q})})\le \kappa \}}
e^{-N\int V(x) d\hat\mu_{L_N({P})}(x)} d\T_N,\nonumber \\
&&\qquad\qquad\qquad 2\|V\|_\infty +c-\sqrt{-\ln|{P}-{Q}|} \kappa/2\}\label{bb}
\end{eqnarray}
where we used \eqref{inegalite expo} and $Z_{N,\T}^{V,{P}}\ge e^{-N\|V\|_\infty}$.  We next remark that by Lemma \ref{exptight}, there exists a positive constant $c$ and a finite constant $C$ such that uniformly on $P$ in a compact set, if we denote by $K_L=\{\int x^2 d\mu(x)\le L\}$, 
$$\T_N^{P}\left(\hat\mu_{L_N}\in K_L^c\right)\le e^{-(cL+C)N}\,.$$
Hence, fixing some $L>0$, \eqref{bb} implies
\begin{align}
&-\inf_{( (M_{Q}^V)^\varepsilon)^c}T_{P}^V \leq \max\bigg\{ 2\|V\|_\infty +c-\sqrt{-\ln|{P}-{Q}|} \kappa/2, 2\|V\|_\infty -cL-C, \label{bb2}\\
&
\limsup_{N\rightarrow\infty}\frac{1}{N}\ln \frac{1}{Z_{N,\T}^{V,{P}}} \int {\bf 1}_{d(\hat\mu_{L_N({P})}, M_{Q}^V)>\varepsilon}{\bf 1}_{d(\hat\mu_{L_N({P})},\hat\mu_{L_N({Q})})\le \kappa }{\bf 1}_{ \hat\mu_{L_N({P})},\hat\mu_{L_N({Q})}\in K_L
}
e^{-N\int V(x) d\hat\mu_{L_N({P})}(x)} d\T_N\bigg\}.\nonumber
\end{align}
We next notice that $\int V(d\mu-d\nu)$  is bounded  by some $\epsilon_{V}^L(\kappa)$  going to zero as $\kappa$  does uniformly  on $\{d(\mu,\nu)\le\kappa\}$
and $\mu,\nu$ in the compact set $K_L$. Indeed, this is obvious if $V$ has bounded variation and Lipschitz norms, with $\epsilon_{V}(\kappa)=\max\{ \|V\|_{\rm BV}, \|V\|_{\rm L}\} \kappa$. If $V$ is bounded continuous, we let $\eta>0$ and choose ${M=\sqrt{2\eta^{-1} L}}$ so that $\mu([-M,M]^c)+\nu([-M,M]^c)\le \eta$ for $\mu,\nu\in K_L$, and then $V_\eta$ with finite bounded variation and Lipschitz norm so that
$$\sup_{x\in [-M,M]}|V(x)-{ V_\eta(x) }|\le \eta\,.$$
We then check that 
$$ \left|\int V(d\mu-d\nu) \right|\le \|V\|_\infty \eta+ 2\eta + \left|\int V_\eta (d\mu-d\nu) \right|\le  (\|V\|_\infty+ 2)\eta +\max\{ \|V_\eta\|_{\rm BV}, \|V_\eta\|_{\rm L}\} \kappa\,.$$
We finally choose $\eta=\eta(\kappa)$ going to zero slowly enough with $\kappa$ so that the above right hand side goes to zero.
Hence, we can bound the  third term in the right hand side of \eqref{bb2} to find that 
\begin{align*}
&\frac{1}{Z_{N,\T}^{V,{P}}} \int 1_{d(\hat\mu_{L_N({P})}, M_{Q}^V)>\varepsilon} 1_{d(\hat\mu_{L_N({P})},\hat\mu_{L_N({Q})})\le \kappa}{\bf 1}_{ \hat\mu_{L_N({P})},\hat\mu_{L_N({Q})}\in K_L
}
e^{-N\int V(x) d\hat\mu_{L_N({P})}(x)}d\T_N\\
&\qquad \qquad \le e^{N \epsilon_V^L(\kappa) }
\frac{Z_{N,\T}^{V,{Q}}}{Z_{N,\T}^{V,{P}}} \frac{1}{ Z_{N,\T}^{V,{Q}}}
\int 1_{\{d(\hat\mu_{L_N({Q})}, M_{Q}^V)\ge \varepsilon-\kappa\}}  e^{-N\int V(x) d\hat\mu_{L_N({Q})}(x)} d\T_N\,.\end{align*}
Similarly, we find that
\begin{eqnarray*}
Z_{N,\T}^{V,{Q}}&\le & \int e^{-N\int   Vd\hat\mu_{L_N({Q})} }{\bf 1}_{ \hat\mu_{L_N({P})},\hat\mu_{L_N({Q})}\in K_L
}
1_{\{d((\hat\mu_{L_N({P})},\hat\mu_{L_N({Q})})\le \kappa\}}
d\T_N\\
&& +e^{(\|V\|_\infty +c-\sqrt{-\ln|{P}-{Q}|}{ \kappa/2})N} +2 e^{(\|V\|_\infty -cL-C )N}\\
&\le & Z_{N,\T}^{V,{P}}  (e^{N\epsilon_V^L(\kappa)}+e^{({ 2}\|V\|_\infty +c-\sqrt{-\ln|{P}-{Q}|}{ \kappa/2})N}+ 2 e^{(2\|V\|_\infty -cL-C )N})\end{eqnarray*}
where  we used  that the partition function is bounded from below by $e^{-\|V\|_\infty N}$.
Moreover the previous large deviation principle implies if $\kappa\le\varepsilon/2$ that
$$\limsup_{N\rightarrow\infty}\frac{1}{N}\ln \frac{1}{Z_{N,\T}^{V,{Q}}} \int_{\{d(\hat\mu_{L_N({Q})}, M_{Q}^V)\ge \varepsilon-\kappa\}}e^{-N\int V(x) d\hat\mu_{L_N({Q})}(x)} {d\T_N} \le  -\inf_{d(\mu,M_{Q}^V)\ge \varepsilon/2}\{T_{Q}^V\}\,.$$ 
Hence, we find that if $L$ is big enough, ${P}-{Q}$ small enough so that 
$\epsilon_V^L(\kappa)>\max\{ { 2}\|V\|_\infty +c-\sqrt{-\ln|{P}-{Q}|}\kappa, 2\|V\|_\infty -cL-C\}$,  \eqref{bb2} yields 

$$-\inf_{( (M_{Q}^V)^\varepsilon)^c}T_{P}^V\le  2\epsilon_V^L(\kappa)-\inf_{d(\mu,M_{Q}^V)\ge \varepsilon/2}\{T_{Q}^V\}
 $$
 We then conclude that the right hand side is negative for such choices of parameters if  $\kappa$ is small enough and therefore
$\inf_{( (M_{Q}^V)^\varepsilon)^c}T_{P}^V>0$  so that $( (M_{Q}^V)^\varepsilon)^c \subset (M_{P}^V)^c$ which yields the result. 
\end{proof}
\section{$\beta$-ensembles}\label{Coulomb}
\subsection{Large deviation principles for $\beta$-ensembles}
In this section we consider the $\beta$-ensembles and collect already known results about their  large deviation principles. We then relate these large deviation principles with the previous ones thanks to Dumitriu-Edelman tri-diagonal representation, as pioneered in \cite{Spohn1}. 
Coulomb gases  on the real line  are given by  the following $\beta$-ensembles distribution:
\begin{equation}
\label{mesure beta}
\text{d}{\P}^{V,\beta}_{N}(x_1,\cdots,x_N) = \frac{1}{Z^{V,\beta}_{N,C}}\prod_{i<j} |x_i-x_j|^{\beta} e^{-\sum_{i=1}^N (\frac{1}{2}x_i^2 +V(x_i)) }\text{d}x_1\cdots\text{d}x_N.
\end{equation}
$V$ will be a continuous potential. When $V=0$ and $\beta=1$, it is well known \cite[Section 2.5.2]{AGZ} that $d\P^{0,1}_N$ is the law of the eigenvalues of the Gaussian orthogonal ensemble of random matrices  with standard Gaussian entries. Hereafter, we keep the potential to be under the form of a quadratic potential plus a general potential only to have simpler notations later on. In this article, we are however interested in the scaling where  $\beta=\frac{2P}{N}$. The large deviation principles for the empirical measure $\hat\mu_N=\frac{1}{N}\sum_{i=1}^N \delta_{x_i}$ have been derived in \cite{Garcia} and yields the following result.
\begin{theorem}\label{David}\cite{Garcia}
Let $P_N$ be a sequence of positive real numbers converging towards $P>0$. Let 
$W(x)=\frac{1}{2}x^2+V(x)$ be a  continuous function such that for some $P'>P+1$ there exists a finite constant $C_V$ such that for all $x$
\begin{equation}\label{toto} W(x)\ge P'\ln (|x|^2+1)+C_V\end{equation}
Then the law of  $\hat\mu_N$ under $\P^{V,\frac{2P_N}{N}}_N$
satisfies a large deviation principle in the scale $N$ and with good rate function $I_P^V(\mu)=f^{V}_{P}(\mu)-\inf f^{V}_{P}$ where
$$f_P^V(\mu)=\frac{1}{2} \int(W(x)+W(y)-2P\ln |x-y|)d\mu(x)d\mu(y) +\int \ln\frac{d\mu}{dx} d\mu(x)  $$
if $\mu\ll dx$ and $\ln\frac{d\mu}{dx}$ is $\mu$-integrable, whereas $f_P^V$ is infinite otherwise. 
\end{theorem}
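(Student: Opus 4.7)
The plan is to derive Theorem \ref{David} from the Sanov-type result of \cite{Garcia} by expressing $d\P^{V,2P/N}_N$ as an exponential tilt of an i.i.d.\ product measure. Under assumption \eqref{toto} with $P'>P$, one has $Z_W:=\int e^{-W(x)}dx<+\infty$, so the probability measure $d\rho(x)=Z_W^{-1}e^{-W(x)}dx$ is well-defined. Using $\beta=2P/N$ and $e^{-\sum_i W(x_i)}dx_1\cdots dx_N=Z_W^N\,d\rho^{\otimes N}$, I would rewrite
\begin{equation*}
d\P^{V,2P/N}_N=\frac{Z_W^N}{Z^{V,2P/N}_{N,C}}\exp\Big\{NP\iint_{x\neq y}\ln|x-y|\,d\hat\mu_N(x)\,d\hat\mu_N(y)\Big\}\,d\rho^{\otimes N}(x_1,\ldots,x_N),
\end{equation*}
so that the Coulomb interaction appears as a Gibbs tilt of the product law $\rho^{\otimes N}$ by a functional of the empirical measure $\hat\mu_N$.

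Next, I would combine this tilt with Sanov's theorem. Under $\rho^{\otimes N}$, the empirical measure $\hat\mu_N$ satisfies an LDP at speed $N$ with good rate function $H(\cdot|\rho)$; for $\mu\ll dx$ a direct computation gives
\begin{equation*}
H(\mu|\rho)=\int \ln\tfrac{d\mu}{dx}\,d\mu+\int W\,d\mu+\ln Z_W.
\end{equation*}
If the interaction $F(\mu):=P\iint\ln|x-y|d\mu(x)d\mu(y)$ were bounded continuous, Varadhan's lemma would immediately produce an LDP with rate $H(\mu|\rho)-F(\mu)-\inf$, and the identity $H(\mu|\rho)-F(\mu)=f_P^V(\mu)+\ln Z_W$ would conclude. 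The precise role of \cite{Garcia} is to carry out this Varadhan-type step with the singular, unbounded functional $F$ and to confirm the claimed form of the rate function on absolutely continuous measures; the matching of rate functions is then purely algebraic and yields $I_P^V=f_P^V-\inf f_P^V$.

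The main obstacle is the justification of this singular Varadhan step, which splits into two pieces. Near the diagonal, one truncates $\ln|x-y|\vee(-M)$ and uses a classical estimate on the $\rho^{\otimes N}$-probability of near-coincidences (a moment bound on pair interactions) to show that the truncation is super-exponentially negligible as $M\to\infty$. At infinity, the logarithm grows unboundedly and one needs moments of $\rho$ sharp enough to absorb this growth on an exponential scale; this is exactly what the margin $P'>P$ in \eqref{toto} provides, since it yields $\int(|x|+1)^{2P+\eta}d\rho(x)<+\infty$ for some $\eta>0$, and hence the exponential integrability required for both the upper-bound Varadhan estimate and for ensuring that $\iint\ln|x-y|d\mu d\mu$ is well-defined on measures of finite rate. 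Once these two estimates are carried out—which is the content of \cite{Garcia}—the rest is bookkeeping.
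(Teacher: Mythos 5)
Your high-level plan (rewrite $\P^{V,2P/N}_N$ as a tilt of a product measure and invoke \cite{Garcia} for the singular Sanov--Varadhan step) is the same in spirit as the paper's, but the decomposition you choose is exactly the one that \cite[Theorem 1.1]{Garcia} does \emph{not} cover, and the step you outsource to \cite{Garcia} is the step that constitutes the paper's actual proof. You put all of $W$ into the reference measure $\rho\propto e^{-W}dx$ and tilt by $\exp\{NP\iint\ln|x-y|\,d\hat\mu_N d\hat\mu_N\}$, a functional unbounded \emph{above}; García-Zelada's theorem applies to energies $H_N$ that are (after regrouping) lower semicontinuous and uniformly bounded below. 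The content of the paper's proof is precisely the regrouping that achieves this: one takes $\alpha\in(0,1-\frac{P}{P'})$, the reference measure $\pi\propto e^{-\alpha W}dx$, and the energy
\begin{equation*}
H_N=\frac{1}{N^2}\sum_{i<j}\Big(\frac{W(x_i)}{2P'}+\frac{W(x_j)}{2P'}-\ln|x_i-x_j|\Big)-\frac{\gamma(N)}{N}\sum_i W(x_i),
\end{equation*}
which is bounded below because \eqref{toto} and $\ln|x-y|\le\frac12\ln(1+x^2)+\frac12\ln(1+y^2)$ control each \emph{pair} term separately, and because $\gamma(\infty)<0$ when $\alpha<1-\frac{P}{P'}$. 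Saying ``the precise role of \cite{Garcia} is to carry out this Varadhan-type step with the singular, unbounded functional $F$'' misattributes the division of labor: with your split, no citation does the work, and you would have to prove the singular Varadhan upper and lower bounds yourself.

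Your fallback sketch of that direct argument also has a quantitative flaw. The Varadhan upper bound for an upper-unbounded tilt requires an exponential moment with some exponent $\gamma>1$, i.e.\ $\sup_N\frac1N\ln\E_{\rho^{\otimes N}}[e^{\gamma N F(\hat\mu_N)}]<\infty$; with the crude bound $\ln|x-y|\le\frac12\ln(1+x^2)+\frac12\ln(1+y^2)$ this reduces to $\int(1+x^2)^{\gamma P}d\rho<\infty$, which in the borderline case $W=P'\ln(1+x^2)+C_V$ forces $\gamma P<P'-\tfrac12$, hence needs $P'-P>\tfrac12$, not merely $P'>P$. For the same reason your claim that $P'>P$ yields $\int(|x|+1)^{2P+\eta}d\rho<\infty$ is false in general (it needs $2(P'-P)>1$), and even $Z_W<\infty$ can fail when $P'\le\frac12$. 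The point of the paper's pairwise regrouping is exactly to use \eqref{toto} inside each pair interaction, so that no global exponential moment of this type is ever needed; without that device (or a sharper replacement for the crude log bound), your route does not close under the stated hypothesis. The near-diagonal issue, which affects the \emph{lower} bound since the tilt is $-\infty$ on coincidences, is also dispatched in one sentence, whereas it is part of the ``extra care'' the paper delegates to the verified hypotheses of \cite[Theorem 1.1]{Garcia}.
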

In fact, neglecting the singularity of the logarithm, this result would be a direct consequence of Sanov's theorem and Varadhan's lemma. Dealing with  this singularity requires extra-care, a difficulty which was addressed in \cite{Garcia}. Indeed, \cite[Theorem 1.1]{Garcia} can be applied, as was kindly shown to us by David García-Zelada.    For $\frac{1}{2P'}<\alpha <1-\frac{P}{P'}$, we can rewrite 
$$d\P^{V,{\frac{2P_N}{N}}}_N(x_1,\dots,x_N)= \frac{1}{\tilde{Z}^{V,{{P_N}}}_N}e^{-2{{P_N}}N H_N(x_1,\dots,x_N)}d\pi(x_1)\dots d\pi(x_N), $$
where,  if $\gamma(N)=(1-N^{-1})\frac{1}{2P'}+\frac{\alpha-1}{2{{P_N}}} $, we set
$$ H_N (x_1,\dots, x_N) = \frac{1}{N^2}\sum_{1\leq i<j \leq N} \left(\frac{W(x_i)}{2P'} + \frac{W(x_j)}{2P'} - \ln|x_i-x_j|\right)-
\frac{\gamma(N)}{N}\sum_{i=1}^N W(x_i) $$
and $\pi$ is  the probability measure given by
$$ d\pi(x)=\frac{1}{Z}e^{-\alpha W(x)}dx. $$
The sequence $(H_N)_{N\ge 0}$ is { (up to considering $N$ large enough)} uniformly bounded from below by \eqref{toto}. Moreover, letting $\gamma(\infty)=\frac{1}{2P'}+\frac{\alpha-1}{2P}$, we set
 for $\mu\in \mathcal{P}(\R)$,
$$ H(\mu) := \frac{1}{2}\int \left(\frac{W(x)}{2P'}+\frac{W(y)}{2P'} - \ln |x-y|\right)d\mu(x)d\mu(y) -\gamma(\infty) \int W(x)d\mu(x), $$
we find  \cite[Lemma 2.1]{Garcia} that the couple $(\{H_N\}_{N\ge 0},H)$ fulfills the assumptions of \cite[Theorem 1.1]{Garcia}. Thus the law of $\hat\mu_N$ satisfies a large deviation principle at speed $N$ with rate function $I^{V}_{P}=f^V_P - \inf f^V_P$, where

\begin{align*}
f^V_P(\mu) = \left\{  \begin{array}{ll}
				           2P H_{V}(\mu) + \int \ln\frac{d\mu}{d\pi}d\mu &\text{if } \mu \ll \pi\text{ and }\ln\frac{d\mu}{dx}\text{ is }\mu\text{-integrable} \\
				            +\infty								     &\text{otherwise.}
				\end{array}             \right.
\end{align*}
 It is not hard to see that
\begin{lemma} \label{lemreg}For any continuously differentiable  function $W${, any $P'>P+1$} such that \eqref{toto} holds,
\begin{itemize}
\item $\mu\mapsto I_P^V(\mu)$ is strictly convex, 
\item $I_P^V$ achieves its minimal value at a unique probability measure $\mu_P^V(dx)\ll dx$ which satisfies the non-linear equation
\begin{equation}\label{carmu} W(x)- 2P\int\ln |x-y|d\mu_P^V(y)+\ln\frac{d\mu_P^V}{dx} =\lambda_P^V\qquad   a.s \end{equation}
where $\lambda_P^V$ is a finite constant.  Furthermore the support of  $\mu_P^V$  is the whole real line and  the density of $\frac{d\mu_P^{V}}{dx}$ is bounded from above by $C_P (|x|+1)^{2(P-P')}$ where $C_P$ is a constant which is  uniformly bounded on compact subsets of $(0,P'-1)$. 

\item Let $D$ be the distance on $\mathcal P(\R)$ given by
\begin{eqnarray}\label{fourier}D(\mu,\mu')&=&\left(-\int \ln |x-y|d(\mu-\mu')(x)d(\mu-\mu')(y)\right)^{1/2}\nonumber\\
	&=&\left(\int_0^\infty\frac{1}{t}\left| \int e^{it x} d(\mu-\mu')(x)\right|^2 d t \right)^{1/2}\end{eqnarray}
{Then $P\mapsto \mu^V_P$ is locally 1/2-Hölder for the distance $D$: For any $\delta>0$ such that $[P-\delta,P+\delta]\subset (0,P'-1)$, there exists a constant $D>0$ such that for all $P-\delta\leq R \leq P+\delta$, we have
$$D(\mu_P^V,\mu_{R}^V)\le D\sqrt{|P-R|}.$$ }
\end{itemize}
\end{lemma}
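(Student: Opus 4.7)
I would decompose $f_P^V(\mu) = \int W\,d\mu + P\, E(\mu) + S(\mu)$ with logarithmic energy $E(\mu) = -\iint \ln|x-y|\,d\mu(x)d\mu(y)$ and entropy $S(\mu) = \int g \ln g\,dx$, $g = d\mu/dx$. The $W$-term is linear, $S$ is strictly convex by strict convexity of $x\mapsto x\ln x$, and $E$ is convex because the Fourier representation \eqref{fourier} gives the identity $E(\mu)+E(\mu') - 2\iint \ln|x-y|\,d\mu(x)d\mu'(y) = D(\mu,\mu')^2 \geq 0$, i.e.\ a non-negative second-order variation along zero-mass directions. Hence $f_P^V$ and therefore $I_P^V = f_P^V - \inf f_P^V$ are strictly convex on probability densities.

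\textbf{Item 2 (existence, Euler--Lagrange equation, density bound, Lipschitz of $\inf f_P^V$).} Lower semi-continuity and compactness of sub-level sets of $I_P^V$ are inherent to Theorem \ref{David}, so a minimizer exists; strict convexity of Item 1 gives uniqueness. To obtain \eqref{carmu} I would perturb $\mu_P^V$ along $(1-\epsilon)\mu_P^V + \epsilon \nu$ for a density $\nu$ and extract the first-order optimality condition, which forces the explicit formula
$$g_P^V(x) = \exp\{\lambda_P^V - W(x) + 2P\, U^{\mu_P^V}(x)\},\qquad U^\mu(x) := \int \ln|x-y|\,d\mu(y).$$
This is strictly positive everywhere, hence $\mathrm{supp}(\mu_P^V) = \R$. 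For the density bound, $|x-y|\leq (1+|x|)(1+|y|)$ gives $U^{\mu_P^V}(x) \leq \ln(1+|x|) + M_P$ with $M_P := \int \ln(1+|y|)\,d\mu_P^V(y)$ locally bounded in $P$ by a short bootstrap (starting from finiteness of $\int W\,d\mu_P^V$ controlled by $f_P^V(\mu_P^V)$ and using $W \gtrsim \ln(1+|x|)$); plugging back into the explicit formula together with $W(x) \geq 2P'\ln(1+|x|) + O(1)$ yields $g_P^V(x) \leq C_P(1+|x|)^{2(P-P')}$, integrable when $P'-P>1$. The Lipschitz property of $P\mapsto \inf f_P^V$ follows from the envelope identity $\tfrac{d}{dP}\inf f_P^V = E(\mu_P^V)$, uniformly bounded on compacts of $P$ by the density bound.

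\textbf{Item 3 (distance estimate).} Expanding $f_P^V(\nu)$ around $\mu_P^V$ and using \eqref{carmu} to cancel the linear term yields the identity
$$f_P^V(\nu) - f_P^V(\mu_P^V) = P\, D(\nu, \mu_P^V)^2 + H(\nu \,\|\, \mu_P^V)$$
for any probability density $\nu$, where $H$ denotes relative entropy. Applying this with $\nu = \mu_{P'}^V$ and the symmetric version after swapping $P\leftrightarrow P'$, then summing, gives the clean relation
$$(P+P')\, D(\mu_P^V, \mu_{P'}^V)^2 + H(\mu_P^V\|\mu_{P'}^V) + H(\mu_{P'}^V\|\mu_P^V) = (P'-P)\bigl(E(\mu_P^V) - E(\mu_{P'}^V)\bigr).$$
To finish I would differentiate \eqref{carmu} in $P$, producing for $h_P := \partial_P g_P^V$ the linearized equation $h_P/g_P^V - 2P\,U^{h_P\,dx} = \partial_P\lambda_P^V + 2U^{\mu_P^V}$ together with the energy identity $\int h_P^2/g_P^V\,dx + 2P\,D(h_P\,dx)^2 = 2B(\mu_P^V, h_P\,dx)$, where $B(\phi,\psi) := \iint \ln|x-y|\,d\phi\,d\psi$. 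The main obstacle is to prove $|B(\mu_P^V, h_P\,dx)|\leq C/P$ so that the energy identity yields $D(h_P\,dx)\leq C/P$: a Fourier-analytic Cauchy--Schwarz based solely on the $L^2$ control of $g_P^V$ from Item 2 only produces the weaker $1/\sqrt{P}$, so one must additionally exploit the explicit representation $U^{\mu_P^V} = (W + \ln g_P^V - \lambda_P^V)/(2P)$ supplied by \eqref{carmu} to recover the missing factor. Integrating the bound on $D(h_P\,dx)$ over $s\in[P,P']$ then gives $D(\mu_P^V,\mu_{P'}^V)\leq C\int_P^{P'} ds/s \leq C|P-P'|/\min(P,P')$, as claimed.
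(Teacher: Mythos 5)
Your items 1 and 2 are essentially the paper's argument (the paper puts the strict convexity on the interaction term $\iint(W(x)+W(y)-2P\ln|x-y|)\,d\mu\,d\mu$ and convexity on a relative entropy with respect to a Cauchy-type reference measure, while you put it on the entropy; both work), though you are sketchy at one point the paper treats carefully: the constant in the density bound involves $e^{\lambda_P^V}$ as well as $\int\ln(1+|x|^2)\,d\mu_P^V$, and the paper controls $\lambda_P^V$ through the identity $\lambda_P^V=\inf f_P^V-P\iint\ln|x-y|\,d\mu_P^V\,d\mu_P^V$ together with local boundedness of $P\mapsto\inf f_P^V$ and non-negativity of relative entropy; your ``short bootstrap'' should be spelled out to include this, but it is the same circle of ideas.

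Item 3, however, has a genuine gap. Your symmetrized identity $(P+P')D(\mu_P^V,\mu_{P'}^V)^2+H(\mu_P^V\|\mu_{P'}^V)+H(\mu_{P'}^V\|\mu_P^V)=(P'-P)\bigl(E(\mu_P^V)-E(\mu_{P'}^V)\bigr)$ is correct and is in fact a cleaner version of the paper's inequality \eqref{po}, but the way you propose to finish does not stand: differentiating \eqref{carmu} in $P$ presupposes that $P\mapsto g_P^V$ is differentiable, which is not known at this stage — the Lipschitz bound you are trying to prove is precisely what the paper later uses to get (only almost-everywhere) differentiability in $P$, so the argument is circular; and even granting $h_P=\partial_Pg_P^V$, you explicitly leave the decisive estimate $|B(\mu_P^V,h_P\,dx)|\le C/P$ unproved, acknowledging that your Cauchy--Schwarz only yields $1/\sqrt{P}$. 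The paper closes exactly this kind of gap without any differentiation: writing the logarithmic kernel in Fourier form \eqref{fourier}, it introduces an auxiliary measure $\nu$ (the Cauchy law) whose log-potential $\phi_\nu$ is bounded with bounded derivative, hence has finite $\|\cdot\|_{1/2}$ norm, so that for any zero-mass measure $\Delta\mu$ one has $\bigl|\iint\ln|x-y|\,d\nu(x)\,d\Delta\mu(y)\bigr|\le C\,D(\Delta\mu,0)$, while $\bigl|\iint\ln|x-y|\,d(\mu_R^V-\nu)(x)\,d\Delta\mu(y)\bigr|\le D(\mu_R^V,\nu)\,D(\Delta\mu,0)$ by Cauchy--Schwarz, with $D(\mu_R^V,\nu)$ uniformly bounded thanks to the density bound of item 2. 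Applied to the right-hand side of your identity (with $\Delta\mu=\mu_P^V-\mu_{P'}^V$ and $\mu_P^V+\mu_{P'}^V=(\mu_P^V-\nu)+(\mu_{P'}^V-\nu)+2\nu$), this gives $|E(\mu_P^V)-E(\mu_{P'}^V)|\le C\,D(\mu_P^V,\mu_{P'}^V)$, hence $(P+P')D^2\le C|P-P'|\,D$ and the claimed bound — so the missing step is not a refinement of your differentiation scheme but this linear-in-$D$ bound on the cross term, which your proposal never establishes.
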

We will see later that in fact $P:(0,P'-1)\rightarrow \mu_P^V$ is differentiable, see Lemma \ref{convex}.
Observe that if $f$ is in $L^2$ with derivative in $L^2$, we can set $\|f\|_\frac{1}{2}=(\int_0^\infty t|\hat f_t|^2 dt)^{1/2}$. Then, for any measure $\nu$ with zero mass,
$$\int f(x)d\nu(x)=\int_{-\infty}^\infty \hat f_t \hat\nu_t dt =\int_{-\infty}^\infty \sqrt{t}\hat f_t  \frac{1}{\sqrt{t}}\hat\nu_t dt$$
so that by Cauchy-Schwartz inequality, we get,
\begin{equation}\label{gh}\left| \int f(x)d\nu(x)\right|^2\le \int_{-\infty}^\infty |t \hat f_t |^2 dt \int_{-\infty}^\infty \frac{1}{|t|} |\hat\nu_t|^2 dt= 4\|f\|_{1/2}^2 D(\nu,0)^2\end{equation}
In particular, the last point in the theorem shows that for any $f$ with finite $\|f\|_{1/2}$, $P\rightarrow \int fd\mu_P^V$ is H\"older $1/2$.

\begin{proof}
For $P''>1$, we denote by $\lambda_{P''}$ the probability measure on the real line given by $\lambda_{P''}(dx):=Z_{P''}^{-1} (|x^2|+1)^{-P''/2} dx$ and 
rewrite $f^{V}_{P}$ (up to a constant  $\ln Z_{P''}$) as
$$ f_{P}^{V}(\mu)=\frac{1}{2} \int(\bar W(x)+\bar W(y) -2P\ln |x-y|)d\mu(x)d\mu(y) +\int \ln\frac{d\mu(x)}{d \lambda_{P''}(x) } d\mu(x)  $$
where $\bar W(y):=W(y)- \frac{1}{2} P''\ln (|y|^2+1)$. Because $\lambda_{P''}$ is  a probability measure so that, for every probability measure $\mu$,
$$\int \ln\frac{d\mu}{d\lambda_{ P''}}(x) d\mu(x)\ge 0$$
by Jensen's inequality since $x\mapsto x\ln x$ is convex.

The first point of the lemma is  clear as $\mu\mapsto \int(\bar W(x)+\bar W(y)-2P\ln |x-y|)d\mu(x)d\mu(y)$ is strictly convex \cite[Lemma 2.6.2]{AGZ} whereas the relative entropy $\mu\mapsto \int \ln \frac{d\mu}{d\lambda_{P''}}(y) d\mu(y)$ is well known to be convex.  Since $f^{V}_{P}$ is a good rate function, it achieves its minimal value at a unique  probability measure $\mu_P^V$. Writing that for any measure $\nu$ with mass zero such that $\mu_P^V+\varepsilon \nu$ is a probability measure  for small enough $\varepsilon$, $I_P^V(\mu_P^V+\varepsilon\nu)\ge I_P^V(\mu_P^V)$, we get that \eqref{carmu} holds $\mu_P^V$ almost surely and that the left hand side in \eqref{carmu}  is greater or equal than the right hand side outside of the support of $\mu_P^V$.
Since the left hand side equals $-\infty$ when the density vanishes, we conclude that the support is the whole real line.
We finally show 
 the boundedness of the density. Note that \eqref{carmu} implies that
\begin{equation}\label{for}\frac{d\mu^V_P}{dx}(x)=e^{\lambda_P^V} e^{-W(x)+2P\int \ln |x-y|d\mu_P^V(y)}\end{equation}
We get from \eqref{toto}, and the fact that 
$\ln|x-y|
\le \frac{1}{2}\ln (|x|^2+1)+\frac{1}{2}\ln(|y|^2+1)$ the bound
$$-W(x)+2P\int \ln |x-y|d\mu_P^V(y)\le -(P'-P)\ln (|x|^2+1)+C_V+P\int \ln (|x|^2+1) d\mu_P^V\,.$$
We thus only need to bound $\int \ln (|x|^2+1) d\mu_P^V$ and $\lambda_P^V$ from above. We first notice that $P\mapsto \inf f^{V}_{P}$ is concave since it is the limit of the free energy $-N^{-1}\ln Z_N^{V,\frac{2P}{N}}$. This is enough to guarantee that this quantity is uniformly bounded on compact sets (as it is at any given point). We denote by $C$ such a bound for a fixed compact set.  
As in \cite[Lemma 2.6.2 (b)]{AGZ}, since the relative entropy is non-negative we find that
$$\int (\bar W(x)-P\ln( |x|^2+1))d\mu_P^V (x)\le  f_P^V(\mu_P^V)\le C\,.$$ 
This implies by our hypothesis \eqref{toto} that 
$$(P'-P''-P)\int \ln (|x|^2+1) d\mu_P^V (x)\le  C-C_V$$
 and therefore plugging this estimate in the infimum of $f_P^V$ gives if $P'-P-P''>0$ (which is always possible as we assumed $P'-P>1$)
$$\int W(x) d\mu_P^V (x)\ge C+\frac{C-C_V}{2(P'-P-P'')}$$
Moreover, again because the relative entropy is non-negative,
\begin{eqnarray*}-P\Sigma(\mu^V_P)&:=&-P\int\ln|x-y|d\mu^V_P(x)d\mu_P^V(y)\\
&\le& C-\int \bar W(x)d\mu_P^V(x)\le C-2(P'-P'')\int \ln(|x|^2+1) d\mu_P^V(x)-C_V\end{eqnarray*}
is  uniformly bounded. 
Finally,
from \eqref{carmu} we have  after integration under $\mu_P^V$ 
\begin{equation}\label{zx}\lambda_P^V=\inf f_P^V- P\int \ln |x-y|d\mu_P^V(x)d\mu_P^V(y)\end{equation}
 is thus uniformly bounded from above. This completes the proof of the upper bound of the density:
 $\frac{d\mu_P^V}{dx}$ is bounded by $C_P (|x|+1)^{2(P-P')}$ where $C_P$ is uniformly bounded on compacts so that $P'-P-1\ge \varepsilon>0$ for some fixed $\varepsilon$.
 
 We next study the regularity of the equilibrium measure $\mu^V_P$ in the parameter $P$. {Let $\delta>0$ be such that $[P-\delta,P+\delta]\subset (0,P'-1)$, and let $P-\delta\leq R \leq P+\delta$.}
 If $\Delta \mu=\mu^V_P-\mu^V_R$, since $\mu^V_P$ minimizes $f_P^V$, we have 
\begin{eqnarray*}
0&\ge& f_{P}^V(\mu_P^V)-f_P^V(\mu_R^V) \\
&=&\int W(x) d\Delta \mu(x)  -2P\int \ln|x-y| d\mu_R^V(x)d\Delta \mu(y)-P\int \ln|x-y| d\Delta \mu(x)d\Delta \mu(y)\\
&& +\int\ln \frac{d\mu_P^V}{dx} d\mu_P^V-\int\ln \frac{d\mu_R^V}{dx} d\mu_R^V\\
&=&\int  (2R\int\ln|x-y|d\mu_R^V(y) -\ln\frac{d\mu_R^V}{dx})(x)d\Delta \mu(x) -2P\int \ln|x-y| d\mu_R^V(x)d\Delta \mu(y)\\
&&-P\int \ln|x-y| d\Delta \mu(x)d\Delta \mu(y) 
+\int\ln \frac{d\mu_P^V}{dx} d\mu_P^V-\int\ln \frac{d\mu_R^V}{dx} d\mu_R^V\\
&=&2(R-P) \int \ln|x-y| d\mu_R^V(x)d\Delta \mu(y)- P\int \int \ln|x-y| d\Delta \mu(x)d\Delta \mu(y) +\int\ln \frac{d\mu_P^V}{d\mu_R^V} d\mu_P^V\end{eqnarray*}
where in the second line we used \eqref{carmu} and the fact that $\Delta\mu(1)=0$. 
By using the Fourier transform of the logarithm,  the centering of $\Delta\mu$ and the definition \eqref{fourier} we deduce
\begin{eqnarray}\int\ln \frac{d\mu_P^V}{d\mu_R^V} d\mu_P^V+ P D(\mu_P^V,\mu_R^V)^2&\le& 2(P-R) \int\int \ln|x-y| d\mu_R^V(x)d\Delta \mu(y)\, .\label{po}
\end{eqnarray}
We can assume without loss of generality that  $R<P$. We now show that the integral of the right hand side is bounded independently of $R\in [P-\delta, P]$. We have $\frac{d\mu^V_R}{dx}\leq \frac{C_R}{(1+|x|)^{2(P'-R)}}$, where $R\mapsto C_R$ is bounded on any compact of $(0,P'-1)$, and in particular on $[P-\delta,P+\delta]$. Thus there exists $C>0$ such that $\dfrac{d\mu_R^V}{dx}\leq \frac{C}{(1+|x|)^2}$, and the same bound holds for $\mu^V_P$. Using that for any $x$, $y$ with $x\neq y$ we have $\ln(|x-y|)\leq \ln(1+|x|)+\ln(1+|y|)$ and the previous bound on the density of $\mu^{V}_R$, we conclude that $\int\int \ln|x-y| d\mu_R^V(x)d\Delta \mu(y)$ is uniformly bounded in $R\in [P-\delta,P+\delta]$.
Since $ \int\ln \frac{d\mu_P^V}{d\mu_R^V} d\mu_P^V\ge 0$ by Jensen's inequality
equation \eqref{po} gives the existence of a finite constant $D$  such that
$$D(\mu_P^V,\mu_R^V)\le D\sqrt{|P-R|}\,.$$

\end{proof}

\subsection{Relation with the large deviation principle for Toda matrices with quadratic potential}
When $V=0$, for any $\beta>0$, Dumitriu and Edelman \cite[Theorem 2.12]{dued} have shown that ${\P}^{0,\beta}_{N}$ is the law of the eigenvalues of a $N\times N$ tri-diagonal matrix 
$C_N^\beta$  such that
 $\left((C_N^\beta)_{j,j}\right)_{1\leq j \leq N}$ are independent standard normal variables,   independent from the off diagonal entries $(C_N^\beta)_{j,j+1}=(C_N^\beta)_{j+1,j}$  which are independent and such that $\sqrt{2} C_N^\beta(j,j+1)$ follows a  $\chi_{(N-j)\beta}$ distribution.
    As in the case of  the Toda measure we hereafter identify ${\P}^{0,\beta}_{N}$ with ${\P}^{\beta}_{N}$.
  We are now going to give an alternate large deviation principle for the empirical measure under ${\P}^{2P/N}_{N}$
  based on this representation, this will allow to relate the rate function $I_P=I_P^{0}$ of the Coulomb Gas in terms of the large deviation rate function  $T_s, s\le P$ for Toda matrices.
  \begin{lemma}
\label{approx exp beta ens}
  The law of the empirical measure $\hat\mu_N$ under ${\P}^{2P/N}_{N}$ satisfies a large deviation principle in the scale $N$ and with good rate function 
  \begin{equation}
\label{equation difficile}
 I_P(\mu) = \lim_{\delta \to 0} \liminf_{M\rightarrow\infty} \inf_{\substack{ \nu_{P/M}, \cdots, \nu_P \text{s.t.} \\  \frac{1}{M}\sum_i\nu_{iP/M} \in B_\mu(\delta) } } \left\lbrace \frac{1}{M} \sum_{i=1}^M T_{iP/M}(\nu_{iP/M})\right\rbrace.
\end{equation}
\end{lemma}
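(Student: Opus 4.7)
The plan is to combine Dumitriu--Edelman's tri-diagonal representation with a block-diagonal approximation that replaces $C_N^{2P/N}$ by a direct sum of independent Toda-type matrices at pressures $P/M, 2P/M, \ldots, P$. Since Theorem \ref{David} already identifies the LDP rate function of $\hat\mu_N$ under $\P_N^{2P/N}$ as $I_P$, it suffices to show that the right-hand side of \eqref{equation difficile} also governs the large deviations of $\hat\mu_N$, so that equality will follow from uniqueness of the rate function.

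By Dumitriu--Edelman, $\hat\mu_N$ is the spectral measure of a tri-diagonal matrix $C_N^{2P/N}$ whose diagonal is standard Gaussian and whose off-diagonal entries satisfy $\sqrt{2}\,C_N^{2P/N}(j,j+1) \sim \chi_{2P(N-j)/N}$ independently. Fix $M\geq 1$, write $N = MK + r_N$ with $r_N < M$, and build $\tilde C_N^{(M)}$ by: (i) zeroing out the $M$ off-diagonal entries at the block boundaries $j=iK$, creating $M$ diagonal blocks of size $K$ plus a residual block of size $r_N$; and (ii) within the $i$-th block, replacing each chi variable by one of parameter $2P_i := 2P(M-i+1)/M$, coupled to the original via $X^j_{2P_i} = \sqrt{(X^j_{2P(N-j)/N})^2 + (Y^j_{\varepsilon_j})^2}$ with $\varepsilon_j \leq 2P/M + O(1/N)$, as in the coupling used for \eqref{inegalite expo}. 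The $M$ blocks are then independent Toda matrices $L_K^i(P_i)$ whose pressures $\{P_i\}_{i=1}^M$ coincide with $\{P/M, 2P/M, \ldots, P\}$ up to reordering.

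Step (i) is a rank-$O(M)$ perturbation, so by \eqref{super inegalite} it contributes at most $O(M/N)$ to $d(\hat\mu_{C_N^{2P/N}}, \hat\mu_{\tilde C_N^{(M)}})$; the residual block of weight $r_N/N \leq M/N$ contributes similarly. Step (ii) gives $|C_N^{2P/N}(j,j+1) - \tilde C_N^{(M)}(j,j+1)| \leq \frac{1}{\sqrt{2}} Y^j_{\varepsilon_j}$, hence at most $\frac{\sqrt{2}}{N}\sum_j Y^j_{\varepsilon_j}$ in the $d$-metric by \eqref{super inegalite} again. The Chebychev argument used for \eqref{inegalite expo}, applied with $A(M) := \sqrt{-\ln(2P/M)}$, yields for any fixed $\delta > 0$
\begin{equation*}
\P\bigl(d(\hat\mu_{C_N^{2P/N}}, \hat\mu_{\tilde C_N^{(M)}}) > \delta\bigr) \leq e^{N(c(M) - A(M)\delta/2)},
\end{equation*}
with $c(M) \to 0$ and $A(M) \to \infty$ as $M \to \infty$. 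Hence $(\hat\mu_{\tilde C_N^{(M)}})_N$ is an exponential approximation of $(\hat\mu_{C_N^{2P/N}})_N$ in the sense of \cite[Theorem 4.2.16]{DZ}.

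Corollary \ref{ldpcor} gives an LDP at scale $K$ for each $\hat\mu_{L_K^i(P_i)}$ with rate function $T_{P_i}$. By independence of the blocks and the contraction principle applied to $(\nu_1,\ldots,\nu_M) \mapsto \frac{1}{M}\sum_i \nu_i$, $\hat\mu_{\tilde C_N^{(M)}}$ satisfies an LDP at scale $N = MK$ with rate function
\begin{equation*}
J^{(M)}(\mu) = \inf\Bigl\{\tfrac{1}{M}\sum_{i=1}^M T_{iP/M}(\nu_i) : \tfrac{1}{M}\sum_{i=1}^M \nu_i = \mu\Bigr\}.
\end{equation*}
Combining the exponential approximation with Theorem \ref{David} via \cite[Theorem 4.2.16]{DZ}, we obtain
\begin{equation*}
I_P(\mu) = \lim_{\delta \to 0}\liminf_{M \to \infty} \inf_{B_\mu(\delta)} J^{(M)} = \lim_{\delta \to 0}\liminf_{M \to \infty} \inf_{\{\nu_i\}: \frac{1}{M}\sum_i \nu_i \in B_\mu(\delta)} \tfrac{1}{M}\sum_{i=1}^M T_{iP/M}(\nu_i),
\end{equation*}
which is \eqref{equation difficile}. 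The main obstacle is uniform control of step (ii) as $M \to \infty$: the last block has vanishing pressure $P_M = P/M$, so the corresponding chi variables degenerate, and one must check that the Chebychev bound with the chosen $A(M)$ still dominates the $N$-fold Laplace transform $c(M)$ coming from the chi MGF.
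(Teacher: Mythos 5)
Your proposal is correct and follows essentially the same route as the paper: Dumitriu--Edelman representation, block-diagonal approximation into $M$ independent blocks of size $\approx N/M$ with constant pressure per block, the chi-additivity coupling plus a Chebychev bound with $A(M)\to\infty$ to get exponentially good approximations, and then \cite[Theorem 4.2.16]{DZ} together with uniqueness of the rate function from Theorem \ref{David}; your closing ``obstacle'' about the low-pressure block is exactly what the paper resolves through the uniform moment bound \eqref{mo}, so it is not a real gap. The only detail you gloss over is that the blocks produced by your coupling are plain tridiagonal, while $T_{P_i}$ from Corollary \ref{ldpcor} is the rate for the periodic Toda law $\T^{P_i}_K$ (corner entries included); this mismatch is another rank-$O(M)$ perturbation, which the paper handles explicitly via its intermediate matrix $U^M_N$ and which your rank accounting absorbs with no change to the argument.
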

Observe for later purpose that we must have $I_{P}=I_{P}^{0}$ where $I_{P}^{V}$ is defined just above Lemma \ref{lemreg}.
\begin{proof} We shall proceed by exponential approximation. We write  $N= k_NM + r_N$, $0\leq r_N \leq M-1$, and consider the matrices
\[
  S_N^M =
  \begin{pmatrix}
    L_{k_N}^1   &   &  &\\
    & \ddots &  &\\
    & &   L_{k_N}^M   &\\
    & & &     0  &
  \end{pmatrix},
\]
with $(L_{k_N}^i)_{1\leq i \leq M}$
a family of independent square matrices  with size $k_N$ distributed according to  $\T_{k_N}^{ \left(P\frac{N-ik_{N}}{N}\right)}$, and a block with null entries of size
 $r_N \times r_N$. We shall prove that  they provide good exponential approximation for  the matrix $C^{\frac{2P}{N}}_N$ following the distribution $ \P^{2P/N}_N$ ,  see \cite[Definition 4.2.14]{DZ}. More precisely, we show that for any positive real number $\delta$ :

 \begin{equation}
\lim_{M\to +\infty} \limsup_{N} \frac{1}{N}\ln  \P( d(\hat{\mu}_{C_N^{\frac{2P}{N}}}, \hat{\mu}_{S^M_N})> \delta) = -\infty\,.
\end{equation}
The lemma is then a direct application of \cite[Theorem  4.2.16 and Exercise 4.2.7]{DZ}.
We first approximate $S^M_N$ by the following matrix 
$$ U^M_N = \left( \begin{array}{rlcl}

\begin{array}{|ccc|}
\hline
~ &  ~ & ~ \\
~ & C_1 & ~ \\
~ & ~  & ~ \\
\hline
\end{array} & \begin{array}{lll}
					~ & ~ &~\\
					~ & ~ & ~\\					
					* & ~  & ~
		                  \end{array} & ~ & ~ \\

\begin{array}{rrr}
 ~&~& * \\					
 ~&~& ~\\
 ~&~&~
       \end{array} &       \begin{array}{c}
						\ddots
      					   \end{array} & \begin{array}{lll}
											~ & ~ &~\\
											~ & ~ & ~\\					
											* & ~  & ~
					 \end{array}& ~   \\

~ & \begin{array}{rrr}
 				~&~& *\\					
 				~&~&~\\
				~&~&~
       		    \end{array} & \begin{array}{|ccc|}
								\hline
								~ &  ~ & ~ \\
								~ &C_M& ~ \\
								~ & ~  & ~ \\
								\hline
							\end{array} & \begin{array}{lll}
												~ & ~ &~\\
												~ & ~ & ~\\					
												* & ~  & ~
					 							\end{array} \\

~ & ~ & \begin{array}{rrr}
 				~&~& *\\					
 				~& ~&~\\
				~& ~&~
       		    \end{array} &\begin{array}{|ccc|}
								\hline
								~ &  ~ & ~ \\
								~ & R^M_N & ~ \\
								~ & ~  & ~ \\
								\hline
							\end{array}

\end{array}\right), $$
where the symbols 
 $*$ denote entries following the law of a matrix distributed according to $\P^{2P/N}_N$ :
$$U_N(ik_N,ik_N+1)=U_N(ik_N+1,ik_N)\sim \frac{1}{\sqrt{2}}\chi_{2P\frac{N-ik_N}{N}} ,\ 1\leq i \leq M\ ;$$
$R^M_N$ has same distribution as the $r_N\times r_N$-bottom-right corner of a $\P^{2P/N}_N$- distributed matrix. $C_i$ has the same coefficients as $L^i_{k_N}$ except for the top-right and bottom-left corner entries which are  put to zero : 
$$C_i = 
\begin{pmatrix}
g_{(i-1)k_N+1} &  \ddots  & ~        &  0      \\
\ddots     &  \ddots  & \frac{1}{\sqrt{2}}c_j^i   & ~        \\
      ~     &  \frac{1}{\sqrt{2}}c_j^i    & \ddots  & \ddots \\
     0     &      ~     & \ddots  & g_{ik_N}
\end{pmatrix}.
 $$
The $(c_j^i)_{1\leq j \leq k_N-1}$ are distributed according to  $\chi_{2P\frac{N-ik_N}{N}}$.\\
For $1\le i \le M$ and $1\le j \le k_N-1$, let $b^i_j = \sqrt{(c^i_j)^2 + \chi_{i,j}^2}$,
where $(\chi_{i,j})_{1\le i \le M, 1\le j \le k_N}$ is an independent family of $\chi$ variables with parameter $2P\frac{k_N-j}{N}$, independent from $U^N_M$.\\
We set, for $1\le i \le M$, $B_{i}$ to be the matrix
$$B_i =  
\begin{pmatrix}
g_{(i-1)k_N+1} &  \ddots  & ~        &  0      \\
\ddots     &  \ddots  & \frac{1}{\sqrt{2}}b_j^i   & ~        \\
      ~     &  \frac{1}{\sqrt{2}}b_j^i    & \ddots  & \ddots \\
     0     &      ~     & \ddots  & g_{ik_N}
\end{pmatrix}\,. $$
The matrix
$$C^{2P/N}_N = 
\left( \begin{array}{rlcl}

\begin{array}{|ccc|}
\hline
~ &  ~ & ~ \\
~ & B_1 & ~ \\
~ & ~  & ~ \\
\hline
\end{array} & \begin{array}{lll}
					~ & ~ &~\\
					~ & ~ & ~\\					
					* & ~  & ~
		                  \end{array} & ~ & ~ \\

\begin{array}{rrr}
 ~&~& * \\					
 ~&~& ~\\
 ~&~&~
       \end{array} &       \begin{array}{c}
						\ddots
      					   \end{array} & \begin{array}{lll}
											~ & ~ &~\\
											~ & ~ & ~\\					
											* & ~  & ~
					 \end{array}& ~   \\

~ & \begin{array}{rrr}
 				~&~& *\\					
 				~&~&~\\
				~&~&~
       		    \end{array} & \begin{array}{|ccc|}
								\hline
								~ &  ~ & ~ \\
								~ &B_M& ~ \\
								~ & ~  & ~ \\
								\hline
							\end{array} & \begin{array}{lll}
												~ & ~ &~\\
												~ & ~ & ~\\					
												* & ~  & ~
					 							\end{array} \\

~ & ~ & \begin{array}{rrr}
 				~&~& *\\					
 				~& ~&~\\
				~& ~&~
       		    \end{array} &\begin{array}{|ccc|}
								\hline
								~ &  ~ & ~ \\
								~ & R^M_N & ~ \\
								~ & ~  & ~ \\
								\hline
							\end{array}
\end{array}\right)$$
is distributed according to $\P^{2P/N}_N$, where the symbols $*$ denote the same coefficients as those of $U^M_N$. Because the rank of 
$S^M_N-U^M_N$ is bounded by $ 2M+r_N \leq 3M$, by (\ref{super inegalite}) we have
\begin{equation}
d(\hat{\mu}_{U^M_N}, \hat{\mu}_{S^M_N})\leq \frac{3M}{N} = \frac{3}{k_N}.
\end{equation}
Let $\delta$ be a positive real number. Then for $N$ large enough so that $k_N$ verifies $\frac{3}{k_N}\leq \delta/2$,
\begin{align*}
\P\left( d(\hat{\mu}_{C_N^{2P/N}},\hat{\mu}_{S^M_N} ) >\delta \right) &\leq \P\left(d(\hat{\mu}_{C_N^{2P/N}},\hat{\mu}_{U^M_N} ) + d(\hat{\mu}_{U^M_N},\hat{\mu}_{S^M_N} )>\delta \right) \\
									      & \leq \P\left( d(\hat{\mu}_{C_N^{2P/N}},\hat{\mu}_{U^M_N} ) >\delta/2 \right).
\end{align*}
Moreover (\ref{super inegalite}) yields
\begin{equation}
d(\hat{\mu}_{U^M_N},\hat{\mu}_{C_N^{2P/N}}) \leq \frac{2}{N}\sum_{i=1}^{N}|Y_i|,
\end{equation}
where { $Y_i$}  is the $i$th  coefficient above or below the $(i,i)$ the coefficient of $C_N^{2P/N}-U^M_N$. Applying the inequality $\sqrt{a + b}\leq \sqrt{a}+\sqrt{b}$ for $a,b\geq 0$ and $a= c_j^i$ and $b=\chi_{i,j}$, we deduce
\begin{equation}
d(\hat{\mu}_{U^M_N},\hat{\mu}_{C_N^{2P/N}}) \leq 
\frac{\sqrt{2}}{k_NM}\sum_{i=1}^{k_NM} \chi^i_{2P/M},
\end{equation}
where the last sum denotes the sum of iid variables with law  $\chi_{2P/M}$ { (and we used that there exists a coupling between a $\chi_{2P \frac{k_N-j}{N}}$ and a $\chi_{2P/M}$ variable such that the first is always bounded above by the second).}

Thus for all  $\delta>0$, for any integer numbers  $N$ such that $\frac{3}{k_N}\leq \delta/2$ (i.e for $N$ larger than some $N_0$ depending on $M$) and for any non-negative function  $A\ :\ M\mapsto A(M)$
\begin{align*}
\P\left( d(\hat{\mu}_{S^M_N},\hat{\mu}_{C_N^{2P/N}}) > \delta \right) &\leq \P\left( \sum_{i=1}^{k_NM} \chi_{2P/M} > \frac{k_NM\delta}{2\sqrt{2}} \right) \\
																&\leq e^{-A(M)k_NM\delta/(2\sqrt{2})} \mathbb{E}\left[ e^{A(M)\chi_{2P/M}} \right]^{k_NM}.
\end{align*}
It is not hard to see that with 
$A(M)=\sqrt{\ln (M)}$, there exists a finite constant $K$ such that 
\begin{equation}\label{mo} \sup_{M\ge 0}\mathbb{E}\int e^{A(M)x} d\chi_{1/M}(x)  \le K\end{equation}
 insuring that 
$$
\frac{1}{N}\ln  \P ( d(\hat{\mu}_{C_N^{2P/N}},\hat{\mu}_{S^M_N}>\delta) \leq -A(M)\frac{\delta}{2\sqrt{2}} + K,
$$
which yields the result. 

\end{proof}

We shall use the previous lemma to study the case with a non trivial potential. Indeed, as a direct consequence of Lemma \ref{approx exp beta ens} and Varadhan's lemma, we deduce the following Theorem. 
\begin{theorem}\label{theobounded}
For any continuous function $V$ such that
\begin{equation}\label{az}
\limsup_{|x|\rightarrow \infty}\frac{|V(x)|}{x^{2}}=0,\end{equation}
the law of the empirical measure $\hat\mu_N$ under ${\P}^{V,2P/N}_{N}$ satisfies a large deviation principle in the scale $N$ and with good rate function $I_P^{V}(\mu)=f_P^V(\mu) -\inf f_P^{V} $ where
  \begin{equation}
\label{equation difficile}
 f_P^{V}(\mu) =\lim_{\delta \to 0} \liminf_M \inf_{\substack{ \nu_{P/M}, \cdots, \nu_P \text{s.t.} \\  \frac{1}{M}\sum_i\nu_{iP/M} \in B_\mu(\delta) } } \left\lbrace \frac{1}{M} \sum_{i=1}^M (T_{iP/M}(\nu_{iP/M})+\int V d\nu_{iP/M})\right\rbrace.
\end{equation}
\end{theorem}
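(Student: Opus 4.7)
The plan is to combine the LDP from Lemma~\ref{approx exp beta ens} with Varadhan's lemma applied to the tilt
$$d\P^{V,2P/N}_N = \frac{Z^{2P/N}_N}{Z^{V,2P/N}_N}\, e^{-N\int V d\hat\mu_N}\, d\P^{2P/N}_N,$$
using a truncation argument to accommodate the quadratic growth allowed by \eqref{az}. The crux is that $\mu \mapsto \int V d\mu$ is not weakly continuous when $V$ is unbounded, so the classical version of Varadhan's lemma does not apply directly.

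First I would establish the exponential moment bound that legitimizes the unbounded Varadhan. By \eqref{az}, for every $\gamma > 1$ one may pick $\epsilon < 1/(2\gamma)$ and $C_\epsilon$ with $|V(x)| \le \epsilon x^2 + C_\epsilon$; the linear change of variables $x_i \mapsto y_i/\sqrt{1-2\gamma\epsilon}$ in the Coulomb density then yields
$$\E^{2P/N}_N\bigl[e^{\gamma N \int |V| d\hat\mu_N}\bigr] \le e^{\gamma N C_\epsilon}(1-2\gamma\epsilon)^{-N/2 - (N-1)P/2},$$
so $\limsup_N N^{-1}\log \E^{2P/N}_N\bigl[e^{\gamma N \int |V| d\hat\mu_N}\bigr] < \infty$ for every $\gamma > 1$. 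The same computation delivers exponential tightness of $\int x^2 d\hat\mu_N$ under $\P^{2P/N}_N$ and, after absorbing the tilt, also under $\P^{V,2P/N}_N$.

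For bounded continuous truncations $V_L(x) = V(x)\chi(x/L)$ with $\chi \in C_c^\infty(\R)$ equal to one on $[-1,1]$ and supported in $[-2,2]$, the classical Varadhan applied to Lemma~\ref{approx exp beta ens} yields an LDP for $\hat\mu_N$ under $\P^{V_L, 2P/N}_N$ with good rate function $I_P(\mu) + \int V_L d\mu - \inf_\nu\{I_P(\nu) + \int V_L d\nu\}$. Because $\overline\nu \mapsto \int V_L d\overline\nu$ is weakly continuous and $\int V_L d\bigl(\tfrac{1}{M}\sum_i \nu_{iP/M}\bigr) = \tfrac{1}{M}\sum_i \int V_L d\nu_{iP/M}$, the liminf--inf formula defining $f_P^{V_L}$ collapses to the closed form $f_P^{V_L}(\mu) = I_P(\mu) + \int V_L d\mu$, so the LDP rate function is exactly $f_P^{V_L} - \inf f_P^{V_L}$.

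Finally, I would pass to the limit $L \to \infty$ by exponential approximation. With $\epsilon_L := \sup_{|x| \ge L} |V(x)|/x^2 \to 0$, the bound $|V-V_L|(x) \le \epsilon_L x^2$ combined with the step-one tightness shows that on $\{\int x^2 d\hat\mu_N \le R\}$ one has $|\int(V-V_L) d\hat\mu_N| \le \epsilon_L R$, while $\P^{V,2P/N}_N\bigl(\int x^2 d\hat\mu_N > R\bigr) \le e^{-cNR}$ for large $R$. This makes $\{\P^{V,2P/N}_N\}_N$ an exponentially good approximation of $\{\P^{V_L, 2P/N}_N\}_N$, and \cite[Theorem~4.2.16]{DZ} transfers the LDP with rate function the pointwise limit, which by continuity of $V$ and \eqref{az} is precisely $f_P^V - \inf f_P^V$ defined by~\eqref{equation difficile}. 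The main obstacle is exactly this transfer, since $\int x^2 d\hat\mu_N$ must be controlled uniformly in $L$ under the $V$-tilted measure itself; this is resolved by the uniform exponential moment produced in the first step, which is independent of the cut-off level.
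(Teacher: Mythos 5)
Your proposal is correct and follows essentially the same route as the paper: Varadhan's lemma applied to the LDP of Lemma \ref{approx exp beta ens} for a bounded continuous truncation of $V$ (the paper uses $V(x)(1+\epsilon x^2)^{-1}$, you use $V(x)\chi(x/L)$), followed by an approximation argument controlling $\int (V-V_L)\,d\hat\mu_N$ through exponential tightness of $\int x^2\,d\hat\mu_N$ under the tilted measures, which is exactly the content of the paper's remark after Theorem \ref{theobounded}; your explicit Gaussian change-of-variables moment bound is a more detailed substitute for the paper's appeal to Lemma \ref{exptight} via the tridiagonal representation. The only cosmetic caveat is that the final transfer is really a Radon--Nikodym density comparison between the two tilted laws rather than an exponentially good approximation of random variables in the strict sense of \cite[Theorem 4.2.16]{DZ}, but the sandwich argument you describe is the correct mechanism and matches the paper's.
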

\begin{remark} Varadhan's lemma gives the result for bounded continuous function $V$. However, we can approximate $V$ by $V(x)(1+\epsilon x^{2})^{-1}$ with overwhelming probability thanks to Lemma \ref{exptight}, which  allows to conclude for any potential $V$ satisfying \eqref{az}

\end{remark}
We shall use this relation to give a better description of the rate function $T_P$. In fact we first consider the free energy
$$F_{\T}^{V,P}=\lim_{N\rightarrow
\infty}\frac{1}{N}\ln Z_{N,\T}^{V,P}, \, F_C^{V,P}= \lim_{N\rightarrow\infty}\frac{1}{N}\ln Z_{N,C}^{V,P}=-\inf f^{V}_{P}.$$

\begin{lemma} \label{convex}For any continuous function $V$ satisfying \eqref{az},
	\begin{itemize}
		\item $P\mapsto F^{V,P}_C=-\inf f_P^{V}$ is continuously differentiable on $(0,+\infty)$. Moreover, for any $P>0$
		$$F^{V,P}_\T=\partial_P(PF^{V,P}_C)$$
		\item  {For any bounded continuous  function $f$, the map $P\in (0,+\infty)\mapsto P\mu^V_P(f)$ is continuously differentiable.} Moreover,  there exists a unique minimizer  $\nu^{V}_{P}$ of $\mu\mapsto T_P(\mu)+\int Vd\mu(x)$, {which satisfies, for any bounded continuous function $f$, 
			$$ \nu^V_P(f) = \partial_P(P\mu^V_P(f)).$$
		}
		Therefore, we have
		\begin{equation}\label{egalite mesures minimisantes}\nu^V_P=\partial_P (P \mu^V_P)\,.\end{equation}
		\item For any probability measure $\mu$,
		\begin{equation}
			T_P(\mu)=-\inf_{V\in \mathcal{C}^0_b}\left\{ \int_\R Vd\mu + F_\T^{V,P}  \right\}.
		\end{equation}
	\end{itemize}
\end{lemma}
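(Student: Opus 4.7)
The plan is to derive all three statements from the variational representation of $f_P^V$ provided by Theorem \ref{theobounded}, combined with Varadhan's lemma applied to Corollary \ref{ldpcor} and a Fenchel duality argument.

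For part (1), the first step is to take the infimum over $\mu$ in the variational formula \eqref{equation difficile}: the constraint $\frac{1}{M}\sum_i \nu_{iP/M} \in B_\mu(\delta)$ then disappears and the optimization decouples across the index $i$. Setting $g(s) := \inf_\nu\{T_s(\nu) + \int V\, d\nu\}$, this produces
$$
P\,F_C^{V,P} \;=\; P\inf f_P^V \;=\; \int_0^P g(s)\,ds
$$
as a Riemann-sum limit. Varadhan's lemma applied to the LDP of Corollary \ref{ldpcor} (using the approximation from the remark following Theorem \ref{theobounded} to handle potentials with polynomial growth) identifies $g(s)$ with $F_\T^{V,s}$, since the latter is extracted from $-\lim_N \tfrac{1}{N}\ln \E_{\T_N^{0,s}}[e^{-N\int V d\hat\mu_{L_N}}]$ via the log-partition ratio $Z_{N,\T}^{V,s}/Z_{N,\T}^{0,s}$. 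Continuity of $s\mapsto g(s)$ on $(0,\infty)$ then follows from the lower semi-continuity of $s\mapsto T_s$ proved at the end of Section \ref{quad} together with Corollary \ref{continuite minimiseur}: evaluating $g(s+h)$ at a near-minimizer at $s$, coupled via the construction of that section, provides the matching upper bound $g(s+h) \leq g(s) + o_h(1)$. With $g$ continuous, $P\mapsto \int_0^P g(s)\,ds$ is $C^1$ on $(0,\infty)$, and differentiating gives (1).

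For part (2), I refine the same decoupling by tracking the minimizers rather than the minimum value. The infimum in \eqref{equation difficile} attained at $\mu = \mu_P^V$ is achieved, along a suitable subsequence, by configurations of approximate minimizers of $T_s + \int V d\cdot$. Extracting a measurable selection $s\mapsto \nu_s^V \in M_s^V$ (via Kuratowski--Ryll-Nardzewski, using the upper semi-continuity of $s\mapsto M_s^V$ from Corollary \ref{continuite minimiseur}) and passing to the limit $M\to\infty$ gives
$$
P\,\mu_P^V \;=\; \int_0^P \nu_s^V\,ds
$$
in the weak topology. Since $P\mapsto \mu_P^V$ is Lipschitz in the $D$-distance of \eqref{fourier} (Lemma \ref{lemreg}), the map $P\mapsto P\mu_P^V$ is differentiable a.e.\ with derivative $\nu_P^V$. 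The continuity of $s\mapsto M_s^V$ combined with the existence of a single weak derivative forces $M_P^V$ to be a singleton at every differentiability point, yielding both the uniqueness statement and the identity \eqref{egalite mesures minimisantes}.

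For part (3), I apply Fenchel duality to the convex good rate function $T_P$ of Theorem \ref{ldpgen}. The log-moment generating functional $\Lambda_P(V) := \lim_N \tfrac{1}{N}\ln \E_{\T_N^{0,P}}[e^{N\int V d\hat\mu_{L_N}}]$ equals $\sup_\mu\{\int V d\mu - T_P(\mu)\}$ by Varadhan and is identified (up to the $V$-independent normalization $F_\T^{0,P}$) with the Toda free energy induced by $-V$, by direct computation of the partition-function ratio. Because $T_P$ is convex and lower semi-continuous on $\mathcal{P}(\R)$ equipped with the weak topology, the Fenchel--Moreau biconjugate identity
$$
T_P(\mu) \;=\; \sup_{V \in \mathcal{C}_b^0}\!\left\{\int V d\mu - \Lambda_P(V)\right\}
$$
holds; substituting $V \mapsto -V$ and rewriting $\Lambda_P$ in terms of $F_\T^{V,P}$ yields the claimed dual representation.

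The main obstacle will be the continuity of $g$ in step (1). Upgrading the bare lower semi-continuity of $s\mapsto T_s$ to a two-sided continuity of the infimum $g(s)$ requires carefully combining the coupling estimate \eqref{inegalite expo} with the continuity of the minimizer set $M_s^V$ from Corollary \ref{continuite minimiseur}, and exploiting that $V$ is bounded on the exponentially tight level sets provided by Lemma \ref{exptight}. Once this continuity is in hand, the Riemann-sum convergence, the fundamental theorem of calculus, and the a.e.\ uniqueness of $\nu_P^V$ all follow by routine arguments.
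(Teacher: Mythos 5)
Your overall strategy is the paper's: represent $PF_C^{V,P}$ as $\int_0^P F_\T^{V,s}\,ds$ and $P\mu_P^V$ as $\int_0^P \nu_s^V\,ds$ through the decoupled formula of Lemma \ref{approx exp beta ens}/Theorem \ref{theobounded}, differentiate in $P$, and obtain the third point from convexity of the rate function by duality (the paper simply cites \cite[Theorem 4.5.10]{DZ}, which packages the Fenchel--Moreau argument you spell out). However, two implementation points need attention, and one is a genuine gap. In part (2) you obtain $P\mu_P^V=\int_0^P\nu_s^V\,ds$ from a Kuratowski--Ryll-Nardzewski \emph{measurable} selection and then ``pass to the limit $M\to\infty$'': Riemann sums over the grid $iP/M$ of a merely measurable selection need not converge to its integral, so this passage fails as stated. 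The paper instead takes a \emph{continuous} minimizing path $s\mapsto\nu^*_{sP}$ (furnished by Corollary \ref{continuite minimiseur}), checks directly from \eqref{equation difficile} that $\int_0^1\nu^*_{sP}\,ds$ attains $\inf f_P^V$, and invokes uniqueness of $\mu_P^V$; this yields the integral identity for \emph{every} such minimizing path, which is precisely what your ``singleton at differentiability points'' argument requires — comparing two selections near a point of non-uniqueness presupposes the identity for both, and your construction only provides it for one.

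Second, the ``main obstacle'' you flag in part (1), continuity of $g(s)=\inf_\nu\{T_s(\nu)+\int V\,d\nu\}$, does not need the coupling machinery (which is in any case only set up for bounded $V$): $P\mapsto \ln Z^{V,P}_{N,\T}$ is a log-Laplace transform in $\sum r_i$, hence convex in $P$, so $F_\T^{V,\cdot}$ and $P\mapsto PF_C^{V,P}$ are convex in the limit. This is how the paper proceeds (a.e. differentiability of the convex function $PF_C^{V,P}$, then convexity of $F_\T^{V,\cdot}$ to determine it everywhere), and convexity on $(0,\infty)$ also gives the continuity you need for the Riemann-sum convergence and the $C^1$ statement. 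With these two repairs — continuous rather than measurable selection in part (2), convexity in $P$ in part (1) — your outline coincides with the paper's proof.
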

\begin{proof}
First notice that, for any probability measure $\mu$, Lemma \ref{approx exp beta ens} implies that
 
\begin{align}
f^{V}_{P}(\mu)=I_{P}(\mu)+\int_\R V \text{d}\mu &\ge 
						\liminf_M \frac{1}{M} \sum_{i=1}^M \inf_\nu \left\{ T_{iP/M}(\nu) + \int_\R V d\nu \right\}\nonumber\\
								    &= \int_0^1 \inf_\nu \left\{ T_{sP}(\nu) + \int_\R V \text{d}\nu \right\}\text{d}s= { -\int_0^1 F_\T^{V,sP} ds}\,.\label{ineq1}
\end{align}
{In the equality between the $\liminf$ and the integral, we used the fact that $s\in (0,1)\mapsto F_\T^{V,sP}$ is convex and therefore continuous.} We claim that this lower bound is achieved. 
For  $s\in [0,1]$, let  $\nu_{sP}^*$ be  a minimizer of  $\mu \mapsto T_{sP}(\mu) + \int V\text{d}\mu$. By Corollary \ref{continuite minimiseur}, we can choose 
 $\nu_{sP}^*$ such that  $s\mapsto \nu_{sP}^*$ is continuous.  Hence, $\mu_P^*:=\int_0^1 \nu_{sP}^* \text{d}s$ makes sense and is a probability measure on $\R$. We claim it minimizes $f_P^{V}$. Indeed, by Lemma \ref{approx exp beta ens}, we have 
 \begin{align}
f_P^{V}(\mu_P^*)&= \lim_{\delta \to 0} \liminf_M \inf_{\frac{1}{M}\sum_{i=1}^M \nu_{iP/M}\in B_{\mu_{P}^*}(\delta)}\left\{ \frac{1}{M}\sum_{i=1}^M T_{iP/M}(\nu_{iP/M}) + \int_\R V d\nu_{iP/M} \right\} \nonumber\\
								    &\le\liminf_M \frac{1}{M} \sum_{i=1}^M \left\{ T_{iP/M}(\nu^*_{iP/M}) + \int_\R Vd\nu^*_{iP/M} \right\} \nonumber \\
								    &=\liminf_M \frac{1}{M} \sum_{i=1}^M \inf_\nu \left\{ T_{iP/M}(\nu) + \int_\R V d\nu \right\}\nonumber\\
								    &= \int_0^1 \inf_\nu \left\{ T_{sP}(\nu) + \int_\R V \text{d}\nu \right\}\text{d}s{{= -\int_0^1 F_\T^{V,Ps} ds.}}
								    \nonumber
\end{align}
With \eqref{ineq1}, we deduce that the above inequality is an equality and that  $f^{V}_{P}$ achieves its minimal value at  $\mu^{*}_{P}$.
 By Lemma \ref{lemreg}, this minimizer is unique and therefore $\mu_P^*=\mu_P^V$ for any choices of paths $\nu_.^*$ and any positive real number $P$.
Hence, we find that 
$$-F_C^{V,P}=\inf f_P^V=I_P(\mu_P^V)+\int_\R V \text{d}\mu_P^V =-\int_0^1 F_\T^{V,Ps} ds\,.$$
By a change of variable we deduce
$$P F_C^{V,P}=\int_0^P F_\T^{V,s} ds\,.$$
Since $s\mapsto F_\T^{V,s}$ is convex, it is continuous. This shows that $P\mapsto PF_C^{V,P}$ is continuously differentiable, and that
for all $P>0$,
$$F_\T^{V,P} =\partial_P (P F_C^{V,P})\,.$$
Moreover, we have seen that for any choice of continuous minimizing path $\nu_\cdot^*$ of $\mu\mapsto T_{\cdot}(\mu)+\int Vd\mu$ and any positive real number $P$,
$$\mu_{P}^{V}=\int_{0}^{1}\nu^{*}_{sP} ds=\frac{1}{P}\int_{0}^{P}\nu^{*}_{s}ds\,.$$
{Integrating the last equality against $f$ bounded continuous we have
$$\mu_{P}^{V}(f) = \frac{1}{P}\int_{0}^{P}\nu^{*}_{s}(f)ds\,.$$
By continuity of $s\mapsto \nu^{*}_{s}(f)$, we deduce that $P\mapsto \mu_{P}^{V}(f)$ is continuously differentiable and that 
$$\nu_P^*(f) = \partial_P(P\mu^V_P(f)).$$
But   Corollary \ref{lemreg} implies that any probability which minimizes $T^V_P$ can be seen as the endpoint of a continuous path $s\in (0,P]\mapsto \nu^*_s$ where each $\nu^*_s$ minimizes $T^V_s$. By the latter, such a measure is then equal to $\partial_P(P\mu^V_P(f))$, showing the uniqueness of the minimizer $\nu^V_P$ of $T^V_P$ and the equality 
$$\nu^V_P = \partial_P(P\mu^V_P).$$}\\
The last point of the Lemma is a direct consequence of \cite[Theorem 4.5.10]{DZ} since $T_P^V$ is convex for all bounded continuous function $V$. 

\end{proof}

By Lemma \ref{lemreg}, $\nu^V_P$  is a probability measure which satisfies almost surely
$${d\nu^V_P}(x)=(C_P^V+2P\int \ln|x-y|d\nu_P^V(y) )d\mu^V_P(x)$$
with $C_P^V$ a constant such that
$$C_P^V+2P \int  \ln|x-y|d\nu_P^V(y) d\mu^V_P(x)=1$$
Furthermore we must have $C_P^V+2P\int \ln|x-y|d\nu_P^V(y)\ge 0$ for all $x$.

\section{Large deviations for Toda Gibbs measure with general potentials}\label{general}
We now consider the measures $\T_{N}^{V,P}$ given by (\ref{Gibbs}), with {potential given by} $W:x\in \R \mapsto a x^{2k}+U(x)
$, $k\geq 2$, with $U(x)/x^{2k}$ going to zero at infinity. We show that under these laws, the law of the empirical measures $(\hat{\mu}_{L_N})_{N\geq 1}$ still fulfills a large deviation principle, by extending the subadditivity argument previously used. We then identify the rate function as before. By Varadhan's Lemma, it is enough to consider the case where $U(x)=\frac{1}{2} x^{2}$  {(we detail this in Section \ref{proof: finale})}. We hereafter continue to use the notation \eqref{Gibbs} with now $V(x)= a x^{2k}$.

\subsection{Exponential tightness}
\label{exp tight}
In this section we prove that if $W(x)= a x^{2k} {+\frac{1}{2}x^2}${, \textit{i.e} $V(x)= a x^{2k} $}
with  $k\geq 2$ and  $a>0$, then the law of the empirical measure of the eigenvalues is exponentially tight under  $\T_{N}^{V,P}$. More precisely, we let 
$\mathcal K_L = \{ \mu \in \mathcal{P}(\R)\ |\ \int V(x) d\mu(x) \leq L \}$ which is a compact of $\mathcal{P}(\R)$. Then we shall prove
\begin{lemma}\label{exptg} There exists a finite constant $c_W$ such that
$$
\T_{N}^{V,P}({\hat{\mu}_N\in\mathcal{K}_L^{c}}) \le e^{-(L-c_W)N}.
$$
\end{lemma}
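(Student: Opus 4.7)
The plan is to read $\mathcal{K}_L^c$ as the event $\{\tr(V(L_N)) > NL\}$, since $\int V\,d\hat\mu_{L_N} = N^{-1}\tr(V(L_N))$. Because $V(x) = ax^{2k} \geq 0$, on this event one has the pointwise bound $e^{-\tr V(L_N)} \leq e^{-NL}$, which gives a direct Markov-type inequality
\[
\T_N^{V,P}(\mathcal{K}_L^c) \;\leq\; e^{-NL}\,\frac{Z_{N,\T}^{P}}{Z_{N,\T}^{V,P}},
\]
where $Z_{N,\T}^P$ is the partition function of the reference ($V=0$) measure, under which the entries of $L_N$ are independent (standard Gaussians on the diagonal and $\sqrt{2}^{-1}\chi_{2P}$ off-diagonal).

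The second step would be to bound the ratio of partition functions from above by bounding $Z_{N,\T}^{V,P}$ from below. Applying Jensen's inequality to the reference measure gives
\[
\frac{Z_{N,\T}^{V,P}}{Z_{N,\T}^{P}} \;=\; \mathbb{E}_{\T_N^P}\!\bigl[e^{-\tr V(L_N)}\bigr] \;\geq\; \exp\!\bigl(-\mathbb{E}_{\T_N^P}[\tr V(L_N)]\bigr).
\]
Combined with the previous inequality, this yields $\T_N^{V,P}(\mathcal{K}_L^c)\leq \exp\bigl(-N L + \mathbb{E}_{\T_N^P}[\tr V(L_N)]\bigr)$, so the whole lemma reduces to checking that $\mathbb{E}_{\T_N^P}[\tr V(L_N)] \leq c_V N$ for some finite constant $c_V = c_V(a,k,P)$.

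The third and last step is that moment bound, which is a routine calculation exploiting the tridiagonal structure. Writing $\tr(L_N^{2k}) = \sum_{i=1}^N (L_N^{2k})_{ii}$ and expanding $(L_N^{2k})_{ii}$ as a sum over closed walks of length $2k$ starting at $i$ in a tridiagonal graph, each diagonal entry is a sum of at most $C_k$ monomials in the $p_j$ and $b_j = e^{-r_j/2}$ with $|j-i| \leq k$. Since under $\T_N^P$ the $p_j$ are standard Gaussians and $\sqrt{2}\,b_j$ are $\chi_{2P}$-distributed, all moments $\mathbb{E}[|p_j|^\alpha |b_j|^\beta]$ are finite and bounded by a constant depending only on $k$ and $P$. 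This gives $\mathbb{E}_{\T_N^P}[(L_N^{2k})_{ii}] \leq C_{k,P}$ uniformly in $i$, whence $\mathbb{E}_{\T_N^P}[\tr V(L_N)] = a\,\mathbb{E}_{\T_N^P}[\tr L_N^{2k}] \leq a C_{k,P} N =: c_V N$.

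There is no real obstacle here: the argument is a clean Markov--Jensen sandwich, and the only ingredient specific to the Toda setting is the fact that under the reference measure $\T_N^P$ the entries of $L_N$ are independent with all moments finite, which has already been recorded in the proof of Lemma~\ref{exptight} and in the representation \eqref{loi du chi}.
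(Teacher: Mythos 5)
Your proof is correct and follows essentially the same route as the paper: a Jensen lower bound on the partition function (equivalently an upper bound on the ratio $Z_{N,\T}^{P}/Z_{N,\T}^{V,P}$) combined with the Markov-type bound coming from $V\ge 0$ on the event $\{\int V\,d\hat\mu_{L_N}\ge L\}$. The only difference is cosmetic: you spell out, via the closed-walk expansion of $\tr(L_N^{2k})$ and the finiteness of Gaussian and $\chi_{2P}$ moments, the bound $\mathbb{E}_{\T_N^{P}}[\tr V(L_N)]\le c_V N$ that the paper uses implicitly in \eqref{blk}.
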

\begin{proof}
 We first  bound from below the free energy  by Jensen's inequality

\begin{equation}\label{blk}
Z_{N,\T}^{V,P} = \int_{\R^{2N}} e^{-N\int_\R V d\hat{\mu}_N}d\T^{P}_N \geq \exp\{ -N
\int_{\R^{2N}} \int_{\R}V d\hat{\mu}_N d\T^{P}_N \}\ge \exp\{{-c_{V} N}\}\,.\end{equation}
From here we deduce exponential tightness for $(\hat{\mu}_N)_N$ under $\T_N^{V,P}$ : for $L>0$, 
\begin{align}
\T_{N}^{V,P}\left( \int_\R V d\hat{\mu}_N \geq L \right) &= \frac{1}{Z_{N,\T}^{V,P}}\int_{\R^{2N}} \mathbf{1}_{ \left\{ \int_\R Vd\hat{\mu}_N \geq L \right\} } e^{-N\int_\R V d\hat{\mu}_N}d\T_{N}^{P} \nonumber\\
													 &\leq e^{N(c_{V}-L)}\label{b1}.
\end{align}
\end{proof}
For later purpose we prove the following result showing that the off diagonal entries $b_{i}=e^{-r_{i}/2},{1\le i\le N}$ of the Lax matrix $L_{N}$ do not become too small :
\begin{lemma}\label{exptlog} For any $P>0$
$$
\limsup_L \limsup_N \frac{1}{N}\ln \T_{N}^{V,P}\left(\frac{1}{N}\sum_{{i=1}}^{N}\ln b_{i }\le -L\right) = -\infty.
$$
\end{lemma}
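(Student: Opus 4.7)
The plan is to compare $\T_N^{V,P}$ to its $V=0$ analogue $\T_N^P$, under which the off-diagonal entries $b_1,\ldots,b_N$ are iid with law $\sqrt{2}^{-1}\chi_{2P}$. Since the density of $\chi_{2P}$ behaves like $x^{2P-1}$ at the origin (see \eqref{loi du chi}), for every $\lambda\in(0,2P)$ the negative moment
$$\phi(\lambda):=\ln\E_{\T_N^P}\bigl[b_1^{-\lambda}\bigr]$$
is finite; this is the only place where the hypothesis $P>0$ really enters.

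Applying Chebyshev's inequality to the iid sum $-\sum_{i}\ln b_i$ under $\T_N^P$ then gives, for every $\lambda\in(0,2P)$,
$$\T_N^P\!\left(\frac{1}{N}\sum_{i=1}^N\ln b_i\le -L\right)\le e^{-\lambda NL}\,\E_{\T_N^P}\!\left[\prod_i b_i^{-\lambda}\right]=e^{N(\phi(\lambda)-\lambda L)}.$$
To transfer this estimate to $\T_N^{V,P}$, I would use that here $V(x)=ax^{2k}\ge 0$, so $e^{-\tr V(L_N)}\le 1$, and the Jensen-type lower bound $Z_{N,\T}^{V,P}\ge e^{-c_V N}$ already established in \eqref{blk} yields the uniform comparison $\T_N^{V,P}(A)\le e^{c_V N}\,\T_N^P(A)$ for every measurable set $A$. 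Combining the two estimates,
$$\T_N^{V,P}\!\left(\frac{1}{N}\sum_{i=1}^N\ln b_i\le -L\right)\le e^{N(c_V+\phi(\lambda)-\lambda L)},$$
so fixing any $\lambda\in(0,2P)$ and letting $L\to\infty$ produces the desired conclusion.

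There is essentially no serious obstacle. The only quantitative input beyond the partition-function lower bound \eqref{blk} is the existence of a finite negative moment of order $\lambda<2P$ for the $\chi_{2P}$ distribution, which is elementary. The non-triviality of the Toda model (coupling of eigenvalues, generality of $V$, etc.) plays no role here: the claim concerns only the \emph{entries} of $L_N$, the $V$-weight costs at most $e^{c_V N}$ when comparing to the free measure, and the rest is a standard Cramér bound for sums of iid variables with some negative exponential moment.
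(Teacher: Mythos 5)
Your proposal is correct and follows essentially the same route as the paper: reduce to the $V=0$ case via the lower bound \eqref{blk} on the partition function (together with $V$ bounded below), then apply an exponential Chebyshev bound using a finite negative moment $\E[b_1^{-\lambda}]$, $\lambda\in(0,2P)$, of the $\chi_{2P}$ distribution; the paper merely computes this moment explicitly as a ratio of Gamma functions (choosing $\lambda=P$, i.e.\ its $\delta=P/2$) where you keep it abstract as $\phi(\lambda)$.
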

\begin{proof}
Since $V$ is bounded from below and we have bounded from below the partition function \eqref{blk}, it enough to prove this estimate when $V=0$. But, in this case the entries are independent and so we only need to prove it for independent chi distributed variables. But
then, for any $0<\delta<P$, with $\mathbb Z_{N,\T}^{P}=\mathbb Z_{N,\T}^{0,P}$ the partition function in \eqref{pp}, we find
$$\T_N^{P}\left( \frac{1}{N}\sum_{{i=1}}^{N}\ln b_{i }\le -L\right)\le e^{-\delta LN }\frac{\mathbb Z_{N,\T}^{P-\delta/2}}{\mathbb Z_{N,\T}^{P}}= e^{-\delta LN }\left(\frac{\Gamma(P-\delta/2)}{2^{{\delta/2}}\Gamma(P)}\right)^{N}
$$
from which the result follows by taking for instance $\delta=P/2$. 

\end{proof}

\subsection{Weak LDP}
\label{weak ldp unbounded}

In this section, we prove that $\hat{\mu}_{L_N}$  satisfies a weak large deviation principle, namely Lemma \ref{wldp}.  In this more general setup, we follow again a  subadditivity argument, which is however more sophisticated since the entries of $L_N$ are not independent anymore. We will restrict ourselves to the case where  $V(x)=a x^{2k}$, $a>0$, the case of a more general potential with the same asymptotic behavior being again a consequence of Varadhan's Lemma. We first show that the large deviation principles
is the same if we remove the entries (equal to $b_{N}$) in the corners $(N,1)$ and $(1,N)$ in the Toda matrix. Namely, let $\tilde L_N$ be the tridiagonal matrix  with entries equal to those of $L_N$ except for the entries $(1,N)$ and $(N,1)$ which vanish  and consider the following modification of $\T^{V,P}_N$ given by 
\begin{equation}
	\label{def T tilde}
	d\tilde \T^{V,P}_N=\frac{1}{\tilde Z_{N}^{V,P} }e^{-\tr V(\tilde L_N)} d\T^P_N\,.
\end{equation}

\begin{lemma}\label{expap} For any  probability measure $\mu$, we have
$$\lim_{\delta\rightarrow 0}
\liminf_{N\rightarrow\infty} \frac{1}{N}\ln \int  1_{d(\hat\mu_{L_N},\mu)<\delta} e^{-\tr V(L_N)} d\T^P_N=\lim_{\delta\rightarrow 0} \liminf_{N\rightarrow\infty} \frac{1}{N}\ln \int  1_{d(\hat\mu_{\tilde L_N},\mu)<\delta} e^{-\tr V(\tilde L_N)} d\T^P_N$$
 Moreover, 
$$
\liminf_{N\rightarrow\infty} \frac{1}{N}\ln \int  e^{-\tr V(L_N)} d\T^P_N=\liminf_{N\rightarrow\infty} \frac{1}{N}\ln \int  e^{-\tr V(\tilde L_N)} d\T^P_N\,.$$
The same results hold if we replace all the liminf by limsup. 
\end{lemma}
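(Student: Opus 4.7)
The plan rests on the observation that $L_N-\tilde L_N$ is a rank-$2$ matrix supported on the entries $(1,N)$ and $(N,1)$, both equal to $b_N$. Consequently, \eqref{super inegalite} gives the pointwise bound $d(\hat\mu_{L_N},\hat\mu_{\tilde L_N})\le 2/N$, so that the two ball events in the statement satisfy $\{d(\hat\mu_{\tilde L_N},\mu)<\delta-2/N\}\subset\{d(\hat\mu_{L_N},\mu)<\delta\}\subset\{d(\hat\mu_{\tilde L_N},\mu)<\delta+2/N\}$. The inflation of $\delta$ by $2/N$ is harmless since the outer $\delta\to 0$ limit is taken.

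The heart of the proof is to replace the Gibbs weight $e^{-\tr V(L_N)}$ by $e^{-\tr V(\tilde L_N)}$ at a sub-exponential cost. Expanding $\tr L_N^{2k}$ as a weighted sum over closed walks of length $2k$ on $\Z/N\Z$ (with loop weights $a_i$ and edge weights $b_i$) and subtracting the corresponding expansion on the non-cyclic path isolates the walks traversing the corner edge. Such a walk stays within distance $k$ of the vertices $\{1,N\}$, so
$$\tr V(L_N)-\tr V(\tilde L_N)=a\sum_{m=1}^{2k}b_N^m P_m,$$
where each $P_m$ is a polynomial of degree $\le 2k-m$ in the $O(k)$ entries $\{a_i,b_i\}$ with indices within distance $2k$ of the corner, and $P_{2k}=2$ (the two oscillating walks between $1$ and $N$).

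I would then argue this boundary term is $o(N)$ on a set of overwhelming probability. Lemma \ref{exptg} provides exponential tightness of $\int V\,d\hat\mu_{L_N}$: with $\T_N^{V,P}$-probability $\ge 1-e^{-\eta N}$ one has $\int V\,d\hat\mu_{L_N}\le M$, and hence $\lambda_{\max}(L_N)\le (NM/a)^{1/(2k)}$, forcing every individual entry to be bounded by the same quantity. Since $b_N$ itself appears as an entry of $L_N$, one also gets $\tr V(L_N)\ge a b_N^{2k}$, so that the Gibbs weight exponentially damps large $b_N$: explicitly, $\T_N^{V,P}(b_N>R)\le e^{-aR^{2k}+c_VN}$. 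Balancing these two controls, each term $b_N^m|P_m|$ is estimated by a conditional Laplace-type integration in $b_N$, and the sum is shown to be $o(N)$ on the good event, symmetrically for $\tilde\T_N^{V,P}$.

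Putting the pieces together: on the good event $G$ one has $e^{-\tr V(L_N)+\tr V(\tilde L_N)}=e^{o(N)}$, while the complement contributes at most $e^{-\eta N}$, which is negligible compared to the partition function lower bound $e^{-c_V N}$ from \eqref{blk}. Hence
$$\int\mathbf{1}_{d(\hat\mu_{L_N},\mu)<\delta}\,e^{-\tr V(L_N)}\,d\T_N^P\le e^{o(N)}\int\mathbf{1}_{d(\hat\mu_{\tilde L_N},\mu)<\delta+2/N}\,e^{-\tr V(\tilde L_N)}\,d\T_N^P+e^{-\eta N},$$
and the reverse inequality follows by exchanging the roles of $L_N$ and $\tilde L_N$. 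Taking $\frac{1}{N}\ln$, then $\liminf$ (or $\limsup$) in $N$, then $\delta\to 0$ yields both claimed identities; the one for the unconstrained partition functions is the same argument with the indicator replaced by $1$. The main obstacle is the estimate for the trace difference: the naive worst-case bound from eigenvalue control alone is $O(N)$, and the $o(N)$ refinement genuinely requires the self-damping of the Gibbs weight on $b_N$, since the marginal law of $b_N$ under $\T_N^{V,P}$ is not explicit and $b_N$ cannot be cleanly decoupled from the other corner entries.
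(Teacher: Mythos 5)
Your reductions at the start (rank two perturbation, $d(\hat\mu_{L_N},\hat\mu_{\tilde L_N})\le 2/N$ via \eqref{super inegalite}, the $\delta\pm 2/N$ inflation, and the lower bound $Z^{V,P}_{N,\T}\ge e^{-c_VN}$ from \eqref{blk}) are fine, but the central step is not: the claim that $\tr V(L_N)-\tr V(\tilde L_N)=o(N)$ on an event of overwhelming probability does not follow from the two controls you invoke, and in fact cannot. The eigenvalue bound from Lemma \ref{exptg} gives every entry $\lesssim (NM)^{1/2k}$, and the Gibbs tail bound $\T^{V,P}_N(b_N>R)\le e^{c_VN-aR^{2k}}$ is vacuous unless $R\gtrsim N^{1/2k}$ with a \emph{large} constant; so both controls saturate at the same scale $b_N\asymp N^{1/2k}$, and on your good event the boundary term is only $O(N)$ with a non-small constant. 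Concretely, a configuration with $b_N\asymp \epsilon N^{1/2k}$ and one neighbouring entry of order $N^{1/2k}$ makes the term $b_N P_1$ of order $\epsilon N$ while the damping $2ab_N^{2k}\asymp \epsilon^{2k}N$ is smaller for small $\epsilon$; such configurations have probability $e^{-cN}$ with a \emph{fixed} rate $c$ (not tunable to be large), so they can neither be absorbed into an $e^{o(N)}$ factor nor discarded, since the main term one compares against may itself be exponentially small with a larger rate. Your closing remark correctly identifies the tension, but the ``self-damping of the Gibbs weight on $b_N$'' does not resolve it: a Laplace optimization of $ab_N|P_1|-2ab_N^{2k}$ over $b_N$ still produces an exponent of order $N$.

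The paper avoids this by never asking the Gibbs weight to control the cut bond. For the easy direction it restricts to $\{b_N\le K\}$ with $K$ a \emph{constant}: under the reference measure $\T^P_N$ the corner entry $b_N$ is independent of $\tilde L_N$ and of the weight $e^{-\tr V(\tilde L_N)}$, so this costs only a constant factor, and then H\"older together with $\frac1N\tr(L_N^{2k})\le M$ gives $|\tr V(L_N)-\tr V(\tilde L_N)|\le C(M,K)N^{(2k-1)/2k}=o(N)$ because $\tr((\Delta L_N)^{2k})=2b_N^{2k}\le 2K^{2k}$ is $O(1)$. For the hard direction it uses the cyclic invariance of $\T^{V,P}_N$ and the pigeonhole estimate $\P(\min_i b_i>K)=e^{-a(K)N}$ with $a(K)\to\infty$ as $K\to\infty$: with probability $1-e^{-a(K)N}$ one may cut the ring at a bond of size at most $K$ (paying a union-bound factor $N$ and errors $e^{-a(K)N}$, $e^{-(M-c_V)N}$ which are killed by letting $K,M\to\infty$ after $N\to\infty$), and then the same $e^{C(M,K)N^{(2k-1)/2k}}$ comparison applies. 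These two devices — independence of the cut bond from $\tilde L_N$ under the reference measure, and cutting where a bond happens to be $O(1)$ rather than at a prescribed corner — are exactly what is missing from your argument, and without them the claimed $e^{o(N)}$ equivalence is not established.
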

\begin{proof}To simplify the notations we take $a=1$ in the proof. First notice that  $V(L_N)-V(\tilde L_N)$ is an homogeneous polynomial of degree $2k$ in $L_{N}$ and $\Delta L_N=L_{N}-\tilde L_{N}$, with degree at least one in the latter. Observe that $\Delta L_N$ only depends on $b_N$.
Therefore, there exists a finite constant $C_k$ such that on $B_N^{{K,M}}:=\{b_N\le K\}\cap \{ \frac{1}{N}{\tr (L_N^{2k})}\le M\}$ ( or $\tilde B_N^{M,K}:=\{b_N\le K\}\cap \{ \frac{1}{N}{\tr (\tilde L_N^{2k})}\le M\}$),
H\"older's inequality implies 
\begin{align*}
\left| \frac{1}{N}\tr \left(V(L_N)-V(\tilde L_N)\right)\right|&\le {C_k}  \sum_{l=1}^{2k}\left(\frac{1}{N}\tr\left((\Delta L_N)^{2k}\right)\right)^{l/2k}\left(\frac{1}{N}\tr\left(L_N^{2k}\right)\right)^{\frac{2k-l}{2k}}  \\ 
&\le C(M,K) N^{-\frac{1}{2k}}
\end{align*}
where 
{$C(M,K)$ is a finite constant} depending only on  $M,K,k$. Note  that in the above right hand side $\tr (L_N^{2k})$ can be replaced by $\tr(\tilde L_N^{2k})$ as they play a symmetric role.
Moreover, by \eqref{bound12}, $d(\hat\mu_{L_N},\hat\mu_{\tilde L_N})\le 2/N$ since $\Delta L_N$ has { rank at most two}. 
We fix a probability measure $\mu$ and first prove that
\begin{equation}\label{bi}
\liminf_{N\rightarrow\infty} \frac{1}{N}\ln \int  1_{d(\hat\mu_{L_N},\mu)<\delta}  e^{-\tr V(L_N)} d\T^P_N\ge
 \liminf_{N\rightarrow\infty} \frac{1}{N}\ln { \int 1_{d(\hat\mu_{L_N},\mu)<\delta} e^{-\tr V(\tilde L_N)} d\T^P_N}\,.\end{equation}
We can assume without loss of generality  that the right hand side does not equal $-\infty$.
Then, we have by the previous remark 
\begin{eqnarray*}
\int  1_{d(\hat\mu_{L_N},\mu)<\delta} e^{-\tr V(L_N)} d\T^P_N &\ge& e^{-C(M,K)N^{\frac{2k-1}{2k}} } \int  1_{\tilde B_N^{M,K}\cap \{d(\hat\mu_{\tilde L_N},\mu)<\delta-\frac{2}{N}\}} e^{-\tr V(\tilde L_N)} d\T^P_N\\
&\ge& C'e^{-C(M,K)N^{\frac{2k-1}{2k}} } \int  1_{\{\tr V(\tilde L_N)\le NM\}\cap \{d(\hat\mu_{\tilde L_N},\mu)<\delta-\frac{2}{N}\}} e^{-\tr V(\tilde L_N)} d\T^P_N\\
&\ge& C'e^{-C(M,K)N^{\frac{2k-1}{2k}} }\left\lbrace  \int  1_{ \{d(\hat\mu_{\tilde L_N},\mu)<\delta-\frac{2}{N}\}} e^{-\tr V(\tilde L_N)} d\T^P_N-e^{-NM}\right\rbrace\\
\end{eqnarray*}
where in the second line we integrated over $b_N\le K$ and in the last line we used that 
$$\int  1_{\{\tr V(\tilde L_N)\ge NM\}} e^{-\tr V(\tilde L_N)} d\T^P_N\le e^{-NM}\,.$$
We next choose $M$ so that this term is smaller than the first term (which we assumed bounded below by  $e^{-N C}$ for some finite $C$).
We deduce that \eqref{bi} holds.
To prove the converse inequality, we notice that there exists one $b_i$ bounded by $K$ with probability greater than $1-e^{-a(K) N}$ under $\T^P_N$, with $a(K)=-\ln P(b\ge K)>0$ which goes to $+$ infinity when $K$ does. By symmetry with respect to the order of  the indices, we may assume it is $b_N$. Therefore, because  $V$ is bounded below by some finite constant $C$, setting $a'(K)=a(K)-C$,  and using  Lemma \ref{exp tight},  we find 
\begin{eqnarray*}
&&\int  1_{d(\hat\mu_{L_N},\mu)<\delta} e^{-\tr V(L_N)} d\T^P_N \le e^{-Na'(K)}+ N \int  1_{\{b_N\le K\}
\cap \{d(\hat\mu_{L_N},\mu)<\delta \}} e^{-\tr V( L_N)} d\T^P_N\\
&&\qquad \le e^{-Na'(K)}+ Ne^{-N(M-c_V)}+ N e^{C(M,K)N^{\frac{2k-1}{2k}} } \int  1_{ B_N^{M,K}\cap \{d(\hat\mu_{\tilde L_N},\mu)<\delta+\frac{2}{N}\}} e^{-\tr V(\tilde L_N)} d\T^P_N\\
&&\qquad \le e^{-Na'(K)}+ Ne^{-N(M - c_V) }+Ne^{C(M,K)N^{\frac{2k-1}{2k}}}  \int  1_{ \{d(\hat\mu_{\tilde L_N},\mu)<\delta+\frac{2}{N}\}} e^{-\tr V(\tilde L_N)} d\T^P_N\\
\end{eqnarray*}
which gives the converse bound, letting $N$ going to infinity, provided $K$ and $M$ are large enough. The same arguments also hold when there is no indicator function, giving the same estimates for the free energy.
\end{proof}

\begin{lemma} \label{wldpT} Let $V(x)=a x^{2k}$ and $P>0$. 
For any $\mu$ in $\mathcal P(\R)$,
there exists a limit
\begin{equation}
\label{PGD faible}
\lim_{\delta \to 0} \liminf_N \frac{1}{N}\ln  \T^{V,P}_N\left( \hat{\mu}_{L_N} \in B_\mu(\delta)\right) = \lim_{\delta \to 0} \limsup_N \frac{1}{N}\ln  \T^{V,P}_N\left( \hat{\mu}_{L_N} \in B_\mu(\delta)\right).
\end{equation}
We denote this limit by $-T^V_P(\mu)$. {Then, $\mu \mapsto T^V_P(\mu)$ is convex.}
\end{lemma}
\begin{proof}
We use the notations of Lemma \ref{wldp}. Let $q\geq 1$ be fixed. For $N\geq 1$ we write $N=k_Nq+r_N$, $0\leq r_N \leq q-1$, and define $L^q_N$  by removing the off diagonal entries 
$b_{\ell q}= {L_N(\ell q,\ell q+1)}, L_N(\ell q +1,\ell q), 1\le \ell\le k_N$,  as well as the entries ${L_N(1,N)},{L_N(N,1)}$, from $ L_N$. 
We set $R^q_N=L_N-L_N^q$.
Let $Z_N^V=Z_{N,\T}^{V,P}$ denote in short the partition function for the Toda Gibbs measure with potential $V$ and set
$$Z_{N,q}^V = \E_{\T^{P}_N}\bigg[ e^{-\tr{V(L^q_N)}} \bigg] = \int e^{-\tr{V(L^q_N)}} d\T^{P}_N.$$

We first show that there is some constant $C_k$ (independent of $N$) such that for all $N\geq 1$,
\begin{equation}\label{bound1}
\frac{1}{N}\ln\frac{Z_{N,q}^V}{Z_{N}^V} \geq {-}\frac{C_k}{q^{1/2k}}.  
\end{equation}
By Jensen's inequality we have
\begin{equation}\label{bn0}\frac{1}{N}\ln\frac{Z_{N,q}^V}{Z_N^V} =\frac{1}{N} \ln\E_{\T^{V,P}_N}\bigg[ e^{\tr(V( L_N )- V(L^q_N))}\bigg] \geq \frac{1}{N}\E_{\T^{V,P}_N}\bigg[\tr(V( L_N)-V(L^q_N))\bigg]\,.\end{equation}
{As in the proof of  Lemma \ref{exptg}, we} bound the right hand side by first noticing that $V(L_N)-V(L^q_N)$ is an homogeneous polynomial of degree $2k$ in $L_{N}$ and $ L_{N}-L^{q}_{N}$, with degree at least one in the latter.
Therefore, H\"older's inequality implies that there exists a finite constant $C$ depending only on $k$ such that 
$$\left| \frac{1}{N}\E_{\T^{V,P}_N}\bigg[\tr(V( L_N)-V(L^q_N))\bigg]\right| \le C \sum_{l=1}^{2k}\E_{\T^{V,P}_N}\bigg[
\frac{1}{N} \tr\left(( L_N-L^q_N)^{2k} \right)\bigg]^{l/2k}  \E_{\T^{V,P}_N}\bigg[ \frac{1}{N} \tr ( L_N^{2k})  \bigg]^\frac{2k-l}{2k}$$
Now, $R_N^q= L_N-L^q_N$ has non zero entries only at the sites $(i,i+1)$ and $(i+1,i)$, $i\in J=\{ \ell q, 1\le \ell \le k_N\}$, as well as $(N,1)$ and $(1,N)$.  We can assume without loss of generality that $q> 2k$ so that $\tr (R_N^q)^{2k}$ simply depends on the $2k$th power of the its non-vanishing entries. 
Thus, there exists a finite constant $C_k$ which only depends on $k$ such that
$$\tr\left((R_N^q)^{2k}\right)\le C_k \sum_{i\in J}  {L_N(i,i+1)}^{2k}+C_{k} {L_N(N,1)}^{2k}
\,.$$
{Next notice that 
$$L_N(i,i+1)^2 \leq L_N(i,i)^2+L_N(i,i+1)^2+L_N(i,i-1)^2=L_N^2(i,i) . $$
Moreover, diagonalizing  $L_{N}=\sum \lambda_{j} v_{j}v_{j}^{T}$, we find by H\"older's inequality  (since $\sum v_{j}(i)^{2}=1$ for all $i\in \{1,\ldots,N\}$) that
$$ L_N^2(i,i)^k=\left(\sum \lambda_{j}^{2}  v_{j}(i)^{2}\right)^{k}\le  \sum \lambda_{j}^{2k}  v_{j}(i)^{2}= L_N^{2k}(i,i).$$}
Thus,
$$L_N(i,i+1)^{2k}\le L_N^2(i,i)^k\le  L_N^{2k}(i,i)\,.$$
Because $L_N$ has  periodic boundary conditions, the distribution of the entries of $L_N$ are invariant under the shift $\theta:i\rightarrow i+1$, so that  under  $\T^{V,P}_N$, ${L_N(i,i+1)}$ has the same law than ${L_N(i+1,i+2)}${, and $L_N(i,i)$ has the same law than $L_N(i+1,i+1)$}.  As a consequence, we have
$$\E_{\T^{V,P}_N}\bigg[ 
\frac{1}{N} \tr\left((L_N-L^q_N)^{2k} \right)\bigg]\le \frac{1}{N} C_k\sum_{i\in J}  \E_{\T^{V,P}_N}\bigg[ 
{L_N^{2k}(i,i)}\bigg]= C_k\frac{k_N}{N} \E_{\T^{V,P}_N}\bigg[ \frac{1}{N}\tr
{(L_N^{2k})}\bigg] \,.$$ 
But \eqref{b1} implies that $\E_{\T^{V,P}_N}\bigg[ \frac{1}{N}\tr (L_N^{2k})
\bigg]$ is bounded by some finite constant independent of $N$. We therefore deduce \eqref{bound1} from \eqref{bn0}.

We next prove the subadditivity property.
Let $\delta >0$ and $L>0$ be given. Let $\mathcal K_L=\{\hat\mu_{L_N}(V)\le L\}$.  
As in equation (\ref{inegalite R}), we have for $q$ big enough,
\begin{align}
\T^{V,P}_N\left(\{\hat{\mu}_{ L_N}\in B_\mu(\delta)\} \cap \mathcal K_{L} \right) 
																 &\geq \frac{Z^V_{N,q}}{Z^V_N}\frac{1}{Z^V_{N,q}} \int_{\mathcal K_L\cap K_{A}} \mathbf{1}_{\hat{\mu}_{L_N^q}\in B_\mu(\delta-4/q)} e^{-\tr(V( L_N))}d{\T}^{P}_N, \label{as}
\end{align}
where we set $K_A = K_{A,N} =\cap_{i\in J} \{\ b_i^{2k}\leq A\}\cap\{b_N^{{2k}}\le A\}$.  
As before, noticing that $V(L_N)-V(L_N^q)$ is a polynomial in $L_N^q$ and $L_N-L_N^q$, we find a finite constant $C$ such that, on $\mathcal K_L\cap K_{A}$, for $N$ large enough,

$$\frac{1}{N} \left|\tr(V(L_N)-V(L^q_N))\right| \le C
\left(\frac{k_N}{N} C_k A \right)^{1/2k} L^\frac{2k-1}{2k}\,.$$ Therefore  if we set $\mathcal K_{L}^{q}=\{\hat\mu_{L_N^q}(V)\le L\}$, we deduce that $K_A\cap \mathcal K_L$ contains $K_A\cap\mathcal K_{L-\varepsilon(q)}^q$ for some $\varepsilon(q)$ going to zero as $q$ goes to infinity.
We deduce from \eqref{bound1} and \eqref{as}  that  there exists a finite constant $C$ independent of $q$ (but dependent on $L$ and $k$) such that
\begin{align}
\T^{V,P}_N\left(\{\hat{\mu}_{L_N}\in B_\mu(\delta)\}\cap \mathcal K_L\right) &\geq \frac{e^{   -N C q^{-1/2k} }}{Z^V_{N,q}} \int_{K_{A}\cap \mathcal K_{L -\varepsilon(q)}^q} \mathbf{1}_{\hat{\mu}_{L_N^q}\in B_\mu(\delta-4/q)} e^{-\tr(V(L_N^q))}d{\T}^{P}_N, 
\end{align}
Since $L_N^q$ is independent of the entries $b_i, i\in J$ and therefore of $K_A$, we see that we can integrate the indicator function of $K_A$ yielding a contribution $C_A^{k_N}$ for some positive constant $C_A$ depending only on $A$. 
We observe as well that $L_N^q$ is a block diagonal matrix $\mbox{diag}(L_q^1,\ldots,L_q^{k_N},B)$ {where $L_q^i$, $1\leq i \leq k_N$, are independent and independent from $B$, $L_q^i$ following $\tilde\T_q^P$ defined in \eqref{def T tilde} and $B$ following $\tilde\T_{r_N}^P$.} Finally, we notice that $\mathcal K_{L -\varepsilon(q)}^q$ contains $\cap_{{1\leq i \leq k_N}} \{\frac{1}{q}\tr((L_q^i)^{2k}) \le L -\varepsilon(q)\}\cap \{\frac{1}{ N-k_N q}\tr(B^{2k})\le L -\varepsilon(q)\}$ since the trace of $(L_N^q)^{2k}$ is a linear combination of the latter traces. 
Thus by independence of the matrices  $L^1_q, \ldots, L^{k_N}_q$ under $\frac{1}{Z^V_{N,q}}e^{-\tr V(L^q_N)}d{\T}^{P}_N$ and convexity of balls, we deduce by taking the logarithm 
that if we set  $u_N(\delta,L) = -\ln \T^{V,P}_N(\{\hat{\mu}_{M_N} \in B_\mu(\delta)\} \cap \mathcal K_L)$  and $v_N(\delta,L) = -\ln \tilde \T^{V,P}_N(\{\hat{\mu}_{\tilde L_N} \in B_\mu(\delta)\} \cap \{\tr (\tilde L_N)^{2k})\le LN\})$, then we have
\begin{equation}
u_N(\delta + 4/q, L+\varepsilon(q) ) \leq N {(C q^{-1/2k}+\ln(C_A)/q)} + k_N v_q(\delta,L ) + v_{r_N}(\delta,L).
\end{equation}
We conclude as in Lemma \ref{wldp} that
\begin{equation}
\limsup_N \frac{u_N(\delta + 4/q, L+\varepsilon(q) )}{N} \leq \frac{v_q(\delta, L)}{q}+{Cq^{-1/2k}+ \frac{\ln(C_A)}{q}}\,.
\end{equation}
We then notice that for all $N,\delta$, $u_N(\delta,L)\ge u_N(\delta,\infty)$ and $v_N(\delta,L)\le v_N(\delta,\infty)+ \ln 2$ for $L$ large enough by Lemma \ref{exptight} (for $\tilde L_{N}$). If therefore we choose a subsequence $q$ going to infinity along which the liminf is taken, we deduce by Lemma \ref{expap} that
$$\limsup_N \frac{u_N(2\delta , \infty)}{N} \leq \liminf_{q\rightarrow\infty} \frac{v_q(\delta, \infty)}{q}=  \liminf_{q\rightarrow\infty} \frac{u_q(\delta, \infty)}{q}$$
If there is no such subsequence then both sides go to infinity and there is nothing to say. Otherwise we conclude as in Lemma \ref{wldp}. \\
{We see that we can adapt in the same fashion  the proof of Theorem \ref{ldpgen} (which stands for quadratic $V$) to our setting and get that $\mu \mapsto T^V_P(\mu)$ is convex, which concludes the proof.}
\end{proof}
\subsection{Convergence of the free energy and large deviation principle}
In the case where $V(x)=ax^{2k}, a>0$, {Lemmas \ref{exptg} and \ref{wldpT} of} the previous two sections showed that a large deviation principle holds for the empirical measure of the eigenvalues of $L_N$ under $\T^{V,P}_N$ with good{, convex} rate function {which, using \cite[Theorem 4.5.10]{DZ}, can be represented as}
\begin{equation}\label{ent}T^V_P(\mu)=-\inf_{W \in C^0_b}\{\int Wd\mu+F^{V+W,P}_\T-F^{V,P}_\T\}
\end{equation} 
where
$$F^{V,P}_\T=\lim_{N\rightarrow \infty}\frac{1}{N}\ln \int e^{-\tr V(L_N)} d\T^{P}_N\,.$$
To identify $T^V_P$ and its minimizer,  our goal is to show that
\begin{lemma} For $a>0$ and $V(x)=a x^{2k} +U(x)$ with $U \in C^0_b(\mathbb R)$, for every $P>0$, we have
\begin{equation}\label{fe}\int_{0}^{1}F^{V,sP}_\T ds= F_{C}^{V,P}
\,.\end{equation}
As a consequence, the unique minimizer of $T^V_P$ is given by $\nu_P^V=\partial_P(P\mu_P^V)$ with $\mu^V_P$ the equilibrium measure for the $\beta$-ensemble with parameter $\beta=2P/N$.
\end{lemma}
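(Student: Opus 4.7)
The plan is to follow the two-step structure of the proof of Lemma~\ref{convex}, with Varadhan's lemma replaced by the large deviation principle of Lemma~\ref{wldpT} and the exponential approximations of Lemmas~\ref{expap} and \ref{exptg}. Recall that in the bounded setting, Lemma~\ref{convex} showed on one hand (via Lemma~\ref{approx exp beta ens} and Jensen's inequality) that $f_P^V(\mu) \geq \int_0^1 \inf_\nu\{T_{sP}(\nu)+\int V d\nu\}\,ds$ for every $\mu$, and on the other hand that this lower bound is attained at $\mu_P^* = \int_0^1 \nu_{sP}^*\,ds$ obtained by concatenating a continuous selection of minimizers. Both steps must be adapted to $V(x) = x^{2k}+W(x)$. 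Since $V$ satisfies the growth hypothesis \eqref{toto} of Theorem~\ref{David}, we already know $\tfrac{1}{N}\ln Z_{N,C}^{V,2P/N} \to F_C^{V,P}$; the job is to evaluate this same limit via the Dumitriu--Edelman tridiagonal representation and recognise $\int_0^1 F_\T^{V,sP}\,ds$.

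For the Dumitriu--Edelman side, I would revisit the proof of Lemma~\ref{approx exp beta ens}: the exponential approximation of $C_N^{2P/N}$ by the block-diagonal matrix $S_N^M$ is independent of $V$, and its distance to $C_N^{2P/N}$ is controlled by $O(1/k_N)$ in rank plus a sum of independent chi variables with parameter $2P/M$. Upgrading this to an approximation under the tilted measures forces us to control $\tr V(C_N^{2P/N}) - \tr V(S_N^M)$. Since $C_N^{2P/N} - S_N^M$ has rank $O(M)$ with entries in every $L^p$, the Hölder-type estimate from Lemma~\ref{expap} applies verbatim: on events where $N^{-1}\tr(C_N^{2P/N})^{2k} \leq R$ (of overwhelming probability by the analogue of Lemma~\ref{exptg} on the $\beta$-side, itself a consequence of Theorem~\ref{David}), we obtain $N^{-1}|\tr V(C_N^{2P/N}) - \tr V(S_N^M)| \leq C(R,M)\, N^{-1/(2k)} \to 0$. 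Using that $S_N^M$ is a block-diagonal matrix whose $i$-th block of size $k_N$ is an independent Toda tridiagonal matrix with pressure $P(N-ik_N)/N$, independence of the blocks factorises $\E[e^{-\tr V(S_N^M)}] = \prod_{i=1}^M Z_{k_N,\T}^{V,P(N-ik_N)/N}$, so that
$$\tfrac{1}{N}\ln \E[e^{-\tr V(S_N^M)}] = \tfrac{1}{M}\sum_{i=1}^M \tfrac{1}{k_N}\ln Z_{k_N,\T}^{V,P(N-ik_N)/N} \xrightarrow[N\to\infty]{} \tfrac{1}{M}\sum_{i=1}^M F_\T^{V,P(M-i)/M}.$$
Letting $M\to\infty$ and recognising the Riemann sum yields $\int_0^1 F_\T^{V,sP}\,ds$, where we use lower semicontinuity of $s\mapsto F_\T^{V,s}$ inherited from the corresponding property of $T_s$ to handle the convergence.

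The main obstacle is thus making the above error control uniform in $N$ and $M$ while keeping constants independent of the tail of $V$. This requires combining three ingredients already developed in the paper: the rank bound \eqref{super inegalite}, the Hölder argument of Lemma~\ref{expap}, and the exponential moment bounds of Lemma~\ref{exptg}. Once the identity $F_C^{V,P} = \int_0^1 F_\T^{V,sP}\,ds$ is secured, the characterisation $\nu_P^V = \partial_P(P\mu_P^V)$ follows exactly as in Lemma~\ref{convex}: the convex function $P\mapsto P F_C^{V,P}$ is differentiable a.e., the integral identity gives $F_\T^{V,P} = \partial_P(PF_C^{V,P})$ at points of differentiability, and a continuous-selection argument in the spirit of Corollary~\ref{continuite minimiseur}, combined with the regularity statement for $P\mapsto \mu_P^V$ from Lemma~\ref{lemreg}, identifies the (a.e.\ unique) minimizer $\nu_P^V$ of $T_P(\cdot)+\int V\,d\cdot$ with the weak derivative $\partial_P(P\mu_P^V)$ tested against functions of finite $1/2$-norm.
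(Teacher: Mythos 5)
There is a genuine gap at the central step of your argument, namely the claim that $C_N^{2P/N}-S_N^M$ ``has rank $O(M)$'' so that the H\"older estimate of Lemma \ref{expap} applies verbatim and gives an error $C(R,M)N^{-1/(2k)}\to 0$. Under the coupling of Lemma \ref{approx exp beta ens}, only the passage from $S_N^M$ to $U_N^M$ is a rank-$O(M)$ perturbation; the passage from $U_N^M$ to $C_N^{2P/N}$ changes \emph{every} off-diagonal entry, because the Dumitriu--Edelman parameters $2P(N-j)/N$ vary along the matrix while each block of $S_N^M$ has a constant parameter. The entrywise differences are dominated by independent $\chi$ variables with parameter at most $2P/M$, so $\frac1N\tr\bigl((C_N^{2P/N}-S_N^M)^{2k}\bigr)$ is of order a constant $\epsilon(M)$ (vanishing only as $M\to\infty$), not $N^{-1/(2k)}$. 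Worse, to transfer this error to the \emph{free energies} you need control at exponential scale, and for $k\ge 2$ the variables $\chi^{2k}$ have no exponential moments: the event $\{\frac1N\sum_{i,j}\chi_{i,j}^{2k}\ge\epsilon\}$ has probability decaying only like $e^{-c(N\epsilon)^{1/k}}$, which cannot beat the factor $e^{N\|W\|_\infty}$ in the upper-bound direction of the comparison $\E[e^{-\tr V(C_N^{2P/N})}]\lessgtr \E[e^{-\tr V(S_N^M)}]$. This is exactly why the exponential-approximation route (Lemma \ref{approx exp beta ens}, Theorem \ref{theobounded}) is confined in the paper to potentials satisfying \eqref{az}, and why the paper proves \eqref{fe} differently: it establishes the differentiated identity $F_\T^{V,P}=\partial_P(PF_C^{V,P})$ by peeling off a \emph{single} corner block of size $\lfloor\epsilon N\rfloor$, whose $\chi$ parameters differ from $2P$ by at most $2P\epsilon$ so that its law can be compared to a Toda block through an explicit Radon--Nikodym density $\prod b_i^{-2P i/N}$ (controlled via Lemma \ref{exptlog}, cost $e^{O(\epsilon^2 N)}$), while the unique coupling entry $r$ is handled by Klein's-lemma convexity, $\tr((C_P^N)^{2k})\ge\tr((\tilde C_P^N)^{2k})$ (this trick needs the perturbation to be supported off the blocks, so it is unavailable in your block-by-block scheme), and by restricting to $|r|\le 1/N$ for the converse inequality. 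Your scheme could be salvaged only by replacing the coupling-based error control with such a density comparison, which is in substance the paper's proof.

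Two smaller points: the Riemann-sum step needs actual continuity of $s\mapsto F_\T^{V,s}$ (the paper derives a Lipschitz-type bound from Lemma \ref{exptlog}), not just lower semicontinuity; and in the identification of the minimizer you invoke a continuous selection ``in the spirit of Corollary \ref{continuite minimiseur}'', but the $\chi$-coupling used there is not available for the tilted measures with unbounded $V$ --- the paper instead integrates the duality bound \eqref{ent} along an arbitrary measurable minimizing path and obtains continuity of the minimizer sets from a density-ratio estimate between $\T_N^{V,s}$ and $\T_N^{V,t}$. These are fixable, but the error-control step above is the essential missing idea.
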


\begin{proof}
We first prove \eqref{fe}. Clearly, for all bounded continuous functions $U,U'$, uniformly in $P$,
$$|F^{a x^{2k}+U,P}_{\T}-F^{a x^{2k}+U',P}_{\T}|\le\|U-U'\|_{\infty}\mbox{ and } |F^{a x^{2k}+U,P}_{C}-F^{ax^{2k}+U',P}_{C}|\le\|U-U'\|_{\infty}\,.$$
Therefore it is enough to prove \eqref{fe} for $U\in C^{1}_{b}(\mathbb R)$ by density. We prove that for $U\in  C^{1}_{b}(\mathbb R)$,
\begin{equation}\label{der1}F^{V,P}_{\T}=\partial_{P}(PF^{V,P}_{C})\,.\end{equation}
Let us consider the tridiagonal matrix $C_P^N$ of the Coulomb model  with distribution $\P^{\frac{2P}{N}}_N$. We decompose, for $\epsilon>0$, this matrix as 
$$C_P^N=\left(\begin{array}{cc}
M_P^{\lfloor \epsilon N\rfloor} &R_N\cr
R_N^T&C_{P_N^\epsilon}^{N_\epsilon}\end{array}\right)$$
where $M_P^{\lfloor \epsilon N\rfloor} $ is a $\lfloor N\epsilon\rfloor \times \lfloor N\epsilon\rfloor$ tri-diagonal symmetric matrix with standard independent Gaussian  variables on the diagonal and chi distributed variables above the diagonal with parameters $2\frac{i}{N}P, N-\lfloor \epsilon N\rfloor\le i\le N-1$, $C_{P_N^\epsilon}^{N_\epsilon}$ is a $N_\epsilon= N-\lfloor \epsilon N\rfloor$ square tridiagonal Coulomb matrix with parameter $2P_N^\epsilon /N$ with $P_N^\epsilon=N_\epsilon N^{-1} P= (1-\lfloor \epsilon N\rfloor/N)P$, 
and $R_N$ has only one non-zero entry $r$ at position $(\lfloor \epsilon N\rfloor, \lfloor \epsilon N\rfloor+1)$. Our first goal is to show that, with $V(x)=ax^{2k}+U(x)$, we have

\begin{equation}\label{eq1}
\lim_{N\rightarrow\infty} \frac{1}{\epsilon N}\ln \E[ e^{ -\tr V(M_P^{\lfloor \epsilon N\rfloor})}] = \frac{1}{\epsilon} (F_C^{V,P}-F_C^{V,P-\epsilon})+F_C^{V,P-\epsilon}\,.
\end{equation} 
We will then complete the argument by showing  that
\begin{equation}\label{eq2}
\lim_{\epsilon \downarrow 0} \lim_{N\rightarrow\infty} \frac{1}{\epsilon N}\ln \E[ e^{ -\tr V(M_P^{\lfloor \epsilon N\rfloor})}] = F^{V,P}_\T
\end{equation} 
We next turn to the proof of \eqref{eq1}. Let us denote
$$\tilde C_P^N=\left(\begin{array}{cc}
M_P^{\lfloor \epsilon N\rfloor} &0\cr
0&C_{P_N^\epsilon}^{N_\epsilon}\end{array}\right)\,.$$
We now show that
\begin{equation}\label{exp1}
\tr ((C_P^N)^{{2k}})\ge \tr((\tilde C_P^N)^{{2k}})\,.
\end{equation}
Indeed, by Klein's lemma \cite[Lemma 4.4.12]{AGZ}, $B\mapsto \tr {(B^{2k})}$ is convex on the set of symmetric matrices. Moreover $\nabla \tr {(B^{2k})}=(2kB^{2k-1})_{ij}$. 
As a consequence, for any symmetric matrices $A, B$
$$\tr((A+B)^{2k})-\tr(B^{2k})\ge \tr (2kB^{2k-1}A)\,.$$
We apply the above inequality with $A=C_P^N-\tilde C_P^N$ and $B=\tilde C_P^N$ and notice that the entry $\lfloor \epsilon N\rfloor, \lfloor \epsilon N\rfloor +1$ of $(\tilde C_{P}^{N})^{2k-1}$ vanishes so that $\tr ((\tilde C_P^N)^{2k-1}(C_P^N-\tilde C_P^N))=0${, proving \eqref{exp1}.}\\
Moreover, if $U$ is $C^1_b$,

\begin{equation}
	\label{eq: estimate c1}
	|\tr(U(C_P^N))-\tr(U(\tilde C_P^N))|\le \int_0^1| \tr (U'(\alpha C_P^N+(1-\alpha)\tilde C_P^N)(C^P_N-\tilde C^P_N))|d\alpha\le \|U'\|_\infty |r|
\end{equation}

Consequently, using the independence of $r$ and $\tilde C_P^N$ and the fact that $C_U=\E[e^{+\|U'\|_\infty |r|}]$ is finite  since $r$ has sub-Gaussian distribution, we deduce  from \eqref{exp1} that
\begin{equation}\label{UB}
 \E[ e^{-\tr(V(C_P^N))}]\le \E[ e^{-\tr(V(\tilde C_P^N))+\|U'\|_\infty |r|  }]\le C_{U}  \E[ e^{-\tr(V(\tilde C_P^N))}]\,.\end{equation}
 As a consequence
 
 $$ \E[ e^{-\tr(V(C_P^N))}]\le  C_{U} \E[ e^{ -\tr V(M_P^{\lfloor \epsilon N\rfloor})}]
 \E[ e^{-\tr(V(C_{P_N^\epsilon}^{N_\epsilon}))}]$$
 which gives the desired lower bound:
\begin{equation}\label{UBd} 
\liminf_{N\rightarrow\infty} \frac{1}{N}\ln  \E[ e^{ -\tr V(M_P^{\lfloor \epsilon N\rfloor})}]
\ge F_C^{P,V}- (1-\epsilon) F_C^{P(1-\epsilon),V}\end{equation}
 where we used that Theorem \ref{David} is valid for $P_N^\epsilon\to (1-\epsilon)P$. 

To get the complementary lower bound we restrict ourselves to {$$\{ |r|\le \frac{1}{N}\}\cap\{\frac{1}{N}\tr((\tilde C_P^N)^{2k})\le M\}$$
Because of \eqref{eq: estimate c1} and applying H\"older's inequality as in the proof of Lemma \ref{expap}, we see that on this set}
$\tr(V(C_P^N))-\tr(V(\tilde C_P^N))$ goes to zero uniformly  for all $M$.
On the other hand the probability of the set $\{ |r|\le \frac{1}{N}\}$ is of order $1/N$. Again by independence we deduce that
\begin{eqnarray}\label{LB}
	\E[ e^{-\tr(V(C_P^N))}]&\ge& e^{{o(1)}}\E[1_{ {  \{ |r|\le \frac{1}{N}\}\cap\{\frac{1}{N}\tr((\tilde C_P^N)^{2k})\le M\}}   }
	e^{-\tr(V(\tilde C_P^N))}]\nonumber\\
	&\ge & e^{{o(1)}}\left(\E[
	e^{-\tr(V(\tilde C_P^N))}]-\E[1_{{\{\frac{1}{N}\tr((\tilde C_P^N)^{2k})\ge M\}}} 
	e^{-\tr(V(\tilde C_P^N))}]\right).
\end{eqnarray}

But we can show exactly as in the proof of Lemma \ref{exptg} that for $M$ large enough
$$\limsup_{N\rightarrow\infty}\frac{\E[1_{\{\tr( (\tilde C_P^N)^{2k})\ge MN\}} e^{-\tr(V(\tilde C_P^N))}]}{\E[e^{-\tr(V(\tilde C_P^N))}]}\le \frac{1}{2}\,,$$ yielding the desired lower bound and therefore \eqref{eq1}. 

To prove \eqref{eq2}, we proceed by approximation. We notice that if we denote by $D_{T}^\epsilon$ the density of the distribution of $M_P^{\lfloor \epsilon N\rfloor} $ with respect to the distribution of a Toda matrix $\tilde L_{\lfloor \epsilon N\rfloor}$ with parameter $P$ to which we removed the extreme entries at  $(1, \lfloor \epsilon N\rfloor)$ and 
$(\lfloor \epsilon N\rfloor,1)$, 
then we get
$$D_T^\epsilon= \prod_{i=1}^{N\epsilon} b_i^{-2P(\frac{i}{N})}\,.$$
Therefore
\begin{eqnarray*}
 \E[ e^{ -\tr V(M_P^{\lfloor \epsilon N\rfloor})}]&\ge& e^{-\epsilon^2 NM } \E[ e^{-\tr V(
 \tilde L_{\lfloor \epsilon N\rfloor})} 1_{-2P\sum_{i=1}^{\epsilon N} \frac{i}{N} \ln b_i\ge -\epsilon^{2} NM}]\\
 &=& e^{-\epsilon^{2} NM } \E[ e^{-\tr V(\tilde L_{\lfloor \epsilon N\rfloor})}]( 1- \tilde \T_{\lfloor N\epsilon \rfloor}^{V,P}(
-2P\sum_{i=1}^{\epsilon N} \frac{i}{N} \ln b_i\le -\epsilon^{2} NM) )\end{eqnarray*}
On the other hand
$$\{2P\sum_{i=1}^{\epsilon N} \frac{i}{N} \ln b_i\ge \epsilon^{2} NM\}\subset \{P \frac{1}{N\epsilon}\sum_{i=1}^{\epsilon N} b_i^2\ge M\}\subset \{\frac{1}{N\epsilon}\tr((\tilde L_{\lfloor N\epsilon\rfloor })^2)\ge M/P\}$$
has exponentially small probability under $\tilde \T_{\lfloor N\epsilon \rfloor}^{V,P}$ for ge enough. This shows{, using Lemma \ref{expap},} that there exists a finite constant $M$ such that
$$ \liminf_{N\rightarrow\infty}\frac{1}{N\epsilon}\ln \E[ e^{ -\tr V(M_P^{\lfloor \epsilon N\rfloor})}]\ge F^{V,P}_\T +M\epsilon$$
Similarly, we can see that the density $\tilde D_{T}^\epsilon= \prod_{i=1}^{N\epsilon} b_i^{2P(\frac{i}{N}-\epsilon)}$ of the law  a Toda matrix $\tilde L_{\lfloor \epsilon N\rfloor}$
with respect to  $M_P^{\lfloor \epsilon N\rfloor} $ is bounded below by $-\epsilon^{2} NM $ on  $\{ \sum_{i=1}^{\epsilon N}(\epsilon- \frac{i}{N}) \ln b_i\le\epsilon^2 NM\}$ so that we get similarly a finite constant $M'$ such that 
\begin{equation}\label{ineq2}\limsup_{N\rightarrow\infty}\frac{1}{N\epsilon}\ln \E[ e^{ -\tr V(M_P^{\lfloor \epsilon N\rfloor})}]\le  F^{V,P(1-\epsilon)}_\T+M'\epsilon\end{equation}
We hence conclude by the continuity of $\epsilon\rightarrow F^{V,P(1-\epsilon)}_\T$ (which is due to its convexity)
Equality \eqref{der1} follows then from \eqref{ineq2}.

We finally show that \eqref{fe} implies that $T^{V}_{P}$ achieves its minimum value at $\partial_{P}(P\mu^{V}_{P}).$
Indeed, by \eqref{ent}, for any bounded continuous $U$,  any probability measure $\nu$, we have
$$T^{V}_{P}(\nu)\ge -\left(\int Ud\nu +F^{V+U,P}_\T-F^{V,P}_\T\right)$$
We integrate this inequality at $\nu=\nu_{{sP}}$ a measurable probability measure valued process such that $\mu=\int_{0}^{1}\nu_{{sP}}
ds$ to deduce from \eqref{fe} that
$$\int_{0}^{1}T^{V}_{P}(\nu_{sP})ds\ge -\left(\int Ud\mu +F^{V+U,P}_C-F^{V,P}_C\right)\,.$$
We finally optimize over $U$ to conclude that
$$\int_{0}^{1}T^{V}_{P}(\nu_{sP})ds\ge -\inf_{U}\left(\int Ud\mu +F^{V+U,P}_C-F^{V,P}_C\right)=I^{V}_{P}(\mu)\,.$$
Since $I^{V}_{P}$ vanishes only at $\mu^{V}_{P}$ we deduce that any measurable minimizing path $(\nu_{sP})_{0\le s\le 1}$ must satisfy $\int_{0}^{1}\nu_{sP}ds=\mu^{V}_{P}$. {If we can consider a continuous $s\mapsto \nu_{sP}$, we conclude that $\partial_{P}(P\mu^{V}_{P})$ makes sense and that it is equal to $\nu_P$. We therefore now show that such a path can be chosen to be continuous.}
But we can follow arguments similar to those of Corollary \ref{continuite minimiseur} to show that the set $M_P^V$ where $T_P^V$  achieves its minimum value is a compact convex subset of $\mathcal P(\mathbb R)$ and is continuous in the sense that for any $\varepsilon>0$, there exists $\delta_\varepsilon>0$ such that for all $\delta<\delta_\varepsilon$,  any ${P,Q >0}$ such that  for $|{P-Q}|\le \delta$
$$M_Q^V\subset (M_P^V)^\varepsilon\,.$$
Indeed, even if we do not have the coupling of Corollary \ref{continuite minimiseur}, we easily see that the density of $\T^{V,Q}_{N}$ with respect to $\T^{V,P}_{N}$ is bounded by $e^{MN|P-Q|}$ with probability greater than $1-e^{-c(M)N}$ with $c(M)$ going to infinity when $M$ goes to infinity. Indeed, the density equals $(P-Q)\sum \ln b_i$ from which the remark follows from Lemma \ref{exptlog}. 
 This implies that
$$-\inf_{( (M_P^V)^\varepsilon)^{c}}T^{V}_{Q}\le \max\{ M|Q-P| -\inf_{( (M_P^V)^\varepsilon)^{c}}T^{V}_{P}, -c(M) N\}$$
which implies that for  any $\varepsilon>0$, for $M$ large enough and $|Q-P|$ small enough $\inf_{( (M_P^V)^\varepsilon)^{c}}T^{V}_{Q}>0$, from which the continuity follows. \end{proof}

\section{Proof of Theorem \ref{theoldp} and \ref{generalization}}
\label{proof: finale}
{Lemma \ref{wldpT} combined with the exponential tightness of Lemma \ref{exptg}}  proves a large deviation principle for the potential $V(x)=a x^{2k}$. 
If now we consider the case where $V(x)/x^{2k}$ goes to $a>0$ at infinity, we can always write $V(x)=ax^{2k}+U(x)$ where $U(x)/x^{2k}$ goes to zero at infinity. We have seen {by Lemma \ref{exptg}} that under $\T^{P,V}_N$, the event 
{$\{\frac{1}{N}\tr(L_N^{2k})>M \}$ has exponentially small probability.}
Let for $\epsilon>0$, $V_\epsilon(x)= a x^{2k} +(1+\epsilon x^{2k})^{-1}U(x)$. Then, the large deviation principle for the distribution of $\hat\mu_{L_N}$ under $\T^{V_\epsilon,P}_N$ follows from Varadhan's lemma.
Moreover,  on $\{\tr(L_N^{2k})\le M N\}$, if $|U(x)|\le \delta x^{2k}$ on $|x|\ge L$, 
\begin{eqnarray*}
\left|\frac{1}{N}\tr V(L_N)-\frac{1}{N}\tr V_\epsilon (L_N)\right|&\le& \frac{\epsilon L^{2k}}{1+\epsilon L^{2k}}\max_{|x|\le L} |U(x)| + \delta \epsilon \frac{1}{N}\tr( \frac{L_N^{4k}}{1+\epsilon L_N^{2k}})\\
&\le&  \frac{\epsilon L^{2k}}{1+\epsilon L^{2k}}\max_{|x|\le L} |U(x)| + M \delta\end{eqnarray*}
which is as small as wished if $M$ is fixed,  $L$ taken large so that $\delta$ is small, provided $\epsilon$ is taken small enough. This shows that we can approximate $\T^{V,P}_N$ by $\T^{V_\epsilon,P}_N$ in the exponential scale from which the result follows. 

The proof of Theorem \ref{generalization} follows the same arguments than those developed in the last section: we approximate the general variance profile by a stepwise constant profile, remove a negligible number of off diagonal entries and then use the large deviation principle for the Toda matrices. We leave the details to the reader.

\bibliographystyle{amsplain}

\bibliography{bibAbel3}

\end{document}